\newcommand{\eps}{\varepsilon}
\newcommand{\Z}{\mathbb{Z}}
\newcommand{\R}{\mathbb{R}}
\newcommand{\C}{\mathbb{C}}
\renewcommand{\phi}{\varphi}
\newcommand{\mcf}{\mathcal{F}}
\newcommand{\mcl}{\mathcal{L}}
\newcommand{\spn}{\text{\span}}
\renewcommand{\Re}{\mathrm{Re} \,}
\renewcommand{\Im}{\mathrm{Im} \,}
\newtheorem{thm}{Theorem}
\newtheorem*{thm*}{Theorem}
\newtheorem{prop}{Proposition}
\newtheorem{lemma}[prop]{Lemma}
\newtheorem{corollary}[prop]{Corollary}
\newtheorem{thmlocal}[prop]{Theorem}
\newtheorem{hyp}{Hypothesis}
\newtheorem{remark}[prop]{Remark}
\numberwithin{equation}{section}
\numberwithin{prop}{section}
\newcommand\blfootnote[1]{%
	\begingroup
	\renewcommand\thefootnote{}\footnote{#1}%
	\addtocounter{footnote}{-1}%
	\endgroup
}
\renewcommand{\spn}{\mathrm{span}}
\newcommand{\phih}{\phi^\mathrm{h}}
\newcommand{\psih}{\psi^\mathrm{h}}
\newcommand{\Deff}{D_\mathrm{eff}}
\newcommand{\phihp}{\phi^\mathrm{h, +}}
\newcommand{\phiIp}{\phi^{\mathrm{I},+}}
\newcommand{\vapp}{v^\mathrm{app}}
\newcommand{\Fres}{F_\mathrm{res}}
\newcommand{\bigo}{\mathrm{O}}
\newcommand{\phiI}{\phi^\mathrm{I}}
\newcommand{\phiII}{\phi^\mathrm{II}}
\newcommand{\FI}{\mathcal{F}^\mathrm{I}}
\newcommand{\FII}{\mathcal{F}^\mathrm{II}}
\newcommand{\Fh}{\mathcal{F}^\mathrm{h}}
\newcommand{\GI}{\mathcal{G}^\mathrm{I}}
\newcommand{\GII}{\mathcal{G}^\mathrm{II}}
\newcommand{\Gh}{\mathcal{G}^\mathrm{h}}
\newcommand{\tfi}{\tilde{\mathcal{F}}^\mathrm{I}}
\newcommand{\tfii}{\tilde{\mathcal{F}}^\mathrm{II}}
\newcommand{\tfh}{\tilde{\mathcal{F}}^\mathrm{h}}
\newcommand{\psiI}{\psi^\mathrm{I}}
\newcommand{\psiII}{\psi^\mathrm{II}}
\newcommand{\PI}{P^\mathrm{I}}
\newcommand{\Ph}{P^\mathrm{h}}
\newcommand{\PII}{P^\mathrm{II}}
\newcommand{\phiIIp}{\phi^{\mathrm{II},+}}
\begin{document}
\begin{center}
{\fontsize{15}{15}\fontseries{b}\selectfont{Front selection in reaction-diffusion systems via diffusive normal forms}}\\[0.2in]
Montie Avery \\[0.1in]
\textit{\footnotesize 
Boston University, Department of Mathematics and Statistics, 665 Commonwealth Ave, Boston, MA, 02215\\
}
\end{center}

\begin{abstract} 
	We show that propagation speeds in invasion processes modeled by reaction-diffusion systems are determined by marginal spectral stability conditions, as predicted by the \emph{marginal stability conjecture}. This conjecture was recently settled in scalar equations; here we give a full proof for the multi-component case. The main new difficulty lies in precisely characterizing diffusive dynamics in the leading edge of invasion fronts. To overcome this, we introduce coordinate transformations which allow us to recognize a leading order diffusive equation relying only on an assumption of generic marginal pointwise stability. We are then able to use self-similar variables to give a detailed description of diffusive dynamics in the leading edge, which we match with a traveling invasion front in the wake. We then establish front selection by controlling these matching errors in a nonlinear iteration scheme, relying on sharp estimates on the linearization about the invasion front. 
{\color{black} We briefly discuss applications to parametrically forced amplitude equations, competitive Lotka-Volterra systems, and a tumor growth model.} 
\end{abstract}

\blfootnote{Email: msavery@bu.edu. ORCID ID: 0000-0001-6524-1081. Data sharing not applicable to this article as no datasets were generated or analyzed during the current study.}

\section{Introduction}

The dynamics near an unstable state often play in important role in describing the formation of nontrivial coherent structures in physical systems. Instabilities may be observed either after a gradual change in system parameters induces a bifurcation, or after the sudden introduction of an external agent to which the system is unstable, such as in the spread of invasive species or epidemics in ecology. In spatially extended systems --- models posed on an unbounded domain in a mathematical idealization --- one expects small localized perturbations to an unstable state to grow until saturation at finite amplitude due to nonlinear effects, and then spread outward, producing a new selected stable state which invades the unstable state. A fundamental question is then to predict the invasion speed and features of the selected state in the wake. 

In the mathematics literature, rigorous predictions for invasion speeds are typically made by using comparison principles to estimate the speed by comparing with delicately constructed super- and sub- solutions \cite{Bramson1, Bramson2, Uchiyama, Comparison1, Comparison2, Lau, HamelPeriodic}. While these techniques can give very detailed information about propagation phenomena, including in higher spatial dimensions \cite{hamelnadirashvili, roquejoffre, berestyckinirenberg},  their use is limited to systems which obey the necessary comparison principles. Typically this restricts the applications to scalar, second order equations, although there are some cases when systems of more than one equation have enough special structure to allow the use of comparison principles \cite{BouinHendersonRyzhik1, BouinHendersonRyzhik2, LLWCooperative, LLWCompetitive}. We also note that when comparison principles are available, convergence results to selected fronts are typically global in nature, and fine aspects of this convergence remain active topics of research \cite{Graham, AnHendersonRyzhik1, AnHendersonRyzhik2}.

On the other hand, many interesting physical systems exhibiting invasion phenomena do not admit a comparison principle; see the extensive review \cite{vanSaarloosReview}. One then more generally predicts invasion speeds using the \emph{marginal stability conjecture} \cite{vanSaarloosReview, colleteckmann, ColletEckmannSH, deelanger, bers1983handbook, brevdo}, which postulates that nonlinear invasion speeds are determined by an appropriate notion of marginal spectral stability of an associated coherent structure describing the front interface. This conjecture was established very recently by the present author and Scheel in a framework of higher order scalar parabolic equations \cite{CAMS}. While this result made fundamental progress in establishing a rigorous model-independent framework for predicting invasion speeds, it did not directly treat multi-component systems, which appear in many relevant examples. The goal of this paper is to close that gap by proving the marginal stability conjecture for systems of parabolic equations.

To that end, we consider general reaction-diffusion systems of the form
\begin{align}
u_t = D u_{xx} + f(u), \quad u(x,t) \in \R^n, \quad x \in \R, \quad t > 0, \label{e: rd}
\end{align}
where the diffusion matrix $D \in \R^{n \times n}$ has strictly positive eigenvalues. We will assume that $u \equiv 0$ is an unstable equilibrium for this system, i.e. $f(0) = 0$ and $\Re \sigma_\mathrm{ess} (D\partial_x^2 + f'(0)) > 0$, where $\sigma_\mathrm{ess} (\mathcal{A})$ denotes the essential spectrum of an operator $\mathcal{A}$. The main result of this paper may then be informally stated as follows. 
\begin{thm*}
	Propagation speeds in reaction-diffusion systems are determined by marginal spectral stability of an associated invasion front. 
\end{thm*}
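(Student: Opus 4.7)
The plan is to extend the matched-asymptotics framework developed in the scalar case \cite{CAMS} to multi-component reaction-diffusion systems. I would work in the frame comoving with the marginally spectrally stable linear spreading speed $c_*$, in which the critical invasion front $\phi_*$ is a stationary solution, and seek solutions in the ansatz form $u(x,t) = \phi_*(x) + \phiI(x,t) + \phiII(x,t) + \psi(x,t)$. Here $\phiI$ encodes the diffusive dynamics of the leading edge as a self-similar profile along the pinch eigenvector, $\phiII$ is an exponentially weighted perturbation describing how the wake approaches $\phi_*$, and $\psi$ is a higher-order correction to be controlled by a nonlinear iteration.

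The central new difficulty is the leading edge analysis. Under marginal pointwise stability the linearization at $u \equiv 0$ in the comoving frame is a $2n$-dimensional spatial dynamical system with a double root of the dispersion relation at the pinch point, but the critical diffusive mode is buried among $2n-2$ other spatial eigenvalues associated with exponentially growing or decaying directions. To isolate it I would construct a sequence of coordinate transformations --- essentially a spatial-dynamics normal form reduction --- that decouples the critical direction from the stable and unstable ones at leading order, producing a reduced \emph{scalar} diffusive equation for the projected component along the pinch eigenvector, with the transverse components slaved through terms that are higher order in the self-similar scaling. This is the step I expect to be the main obstacle: the transformations must be regular in the moving frame and compatible with both the pointwise and exponentially weighted estimates used later for matching, which is presumably what earns the name ``diffusive normal forms'' in the title. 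Once the reduced equation is in place, self-similar variables $\xi = x/\sqrt{t}$, $\tau = \log t$ yield the asymptotic profile $\phiI$ as a logarithmically shifted Gaussian, reproducing the classical $(3/2) t^{-1} \log t$ correction to the front position.

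The wake analysis relies on sharp semigroup estimates for the linearization $\mathcal{L}$ about $\phi_*$. Exponential weights tuned to the pinch point render the essential spectrum marginally stable, and under generic spectral assumptions the resolvent has a quadratic branch point at the origin, yielding algebraic decay of rate $t^{-3/2}$ in the weighted norm; constructing these estimates carries over the Riemann-surface resolvent unfolding of \cite{CAMS} but must now accommodate the multidimensional spatial dynamics and the diffusive normal form reduction described above. Matching the leading edge and wake representations across the overlap region generates a residual $\Fres$ which, through Duhamel's formula against the sharp semigroup estimates, I would absorb into $\psi$ by a contraction argument in a Banach space tailored to the predicted algebraic decay. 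This iteration yields convergence of the nonlinear dynamics to $\phi_*$ at speed $c_*$, proving the theorem. The abstract spectral hypotheses would then be verified in concrete transcritical, saddle-node, and pitchfork bifurcation scenarios by rescaling near the bifurcation and controlling the resulting singular perturbation of the pinch-point conditions through a functional-analytic continuation argument.
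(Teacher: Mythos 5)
Your high-level plan correctly names the three main ingredients (diffusive tail construction via a normal form, sharp semigroup estimates in the exponentially weighted space, nonlinear iteration controlling matching errors) and you rightly identify the leading-edge normal form as the bottleneck. But there is a genuine gap in how you propose to execute that step, and it is precisely the place where the multi-component case differs from the scalar one.

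You say you would ``construct a sequence of coordinate transformations --- essentially a spatial-dynamics normal form reduction'' of the $2n$-dimensional first-order spatial ODE system, decoupling the critical center direction from the $2n-2$ hyperbolic ones. The problem is that any such reduction of $A_{c}(\lambda)$ into center and hyperbolic blocks is necessarily $\lambda$-dependent (the center eigenspaces vary with $\lambda$), so the transformation is a frequency-dependent object living on the resolvent side. It cannot be pulled back to a time-independent, $x$-local change of variables on the nonlinear PDE, which is exactly what you need in order to rewrite the equation in self-similar variables and glue a diffusive tail to a front in physical space. The innovation in the paper is to work instead at the level of the $n\times n$ matrix polynomial symbol $M(\lambda,\nu,c) = D\nu^2 + c\nu I + f'(0) - \lambda I$ and show (Lemmas \ref{l: normal form colinear} and \ref{l: lin ind normal form}) that \emph{constant} invertible matrices $S,Q\in\R^{n\times n}$ suffice to put $S\,M(\lambda,\nu-\eta_*,c_*)\,Q$ into a block-triangular ``diffusive normal form.'' Because $S$ and $Q$ are constant they trivially descend to a PDE-level conjugation $v\mapsto Q^{-1}v,\ \text{(apply $S$)}$, which is what makes the whole construction propagate into the nonlinear setting. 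The equivalence of Hypothesis~\ref{hyp: dispersion reln}(i) with the Jordan-chain formulation of Hypothesis~\ref{hyp: DR matrix pencil} for the \emph{nonlinear} eigenvalue problem $A(0,\nu)u=0$ (not for the $2n$-dimensional spatial eigenvalue problem) is exactly the linear-algebra preparation for this step.

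A second point you miss is that the normal form is not always a single reduced scalar diffusion equation. It splits into two cases depending on whether the Jordan-chain vectors $u_0,u_1$ are colinear. In the linearly independent case one obtains a leading-order differential--algebraic system of the form $u_t = \alpha u_{xx} + \beta v_x$, $0 = -u_x + v$, in which the effective diffusivity $\alpha+\beta$ arises from cross-coupling between components rather than from a direct second-derivative term on the projected component; your phrase ``transverse components slaved at higher order'' undersells this, because here the slaved component contributes at leading order to the effective diffusion coefficient. This case must be tracked separately in the self-similar ansatz (an extra term appears in the $\psi^{\mathrm{II}}$ expansion). Finally, and less centrally, your ansatz $u = \phi_* + \phi^{\mathrm{I}} + \phi^{\mathrm{II}} + \psi$ is an additive superposition on all of $\R$, whereas the paper spatially glues $\omega q_*$ to the reconstructed tail across a time-dependent interval $[(t+T)^\mu,(t+T)^\mu+1]$, and the sharp control of the resulting commutator term is part of what makes the matching estimate of Proposition~\ref{p: colinear residual} close.
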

See Theorem \ref{t: main} for a more precise statement. 

\begin{remark}
	The results of this paper apply in the general semilinear parabolic case
	\begin{align}
	u_t = \mathcal{P}(\partial_x)u + f(u, \partial_x u, ..., \partial_x^{2m-1} u), \quad u = u (x,t) \in \R^n, \label{e: parabolic}
	\end{align}
	where $\mathcal{P}$ is a polynomial such that $\mathcal{P}(\partial_x)$ is an elliptic operator of order $2m$. The reaction-diffusion case \eqref{e: rd} is often the most relevant to applications, and so we focus on it to simplify the presentation.  We explain the modifications necessary for the general case in Remark \ref{rmk: parabolic}.  
\end{remark}

\begin{figure}
	\begin{center}
		\includegraphics[width=1\textwidth]{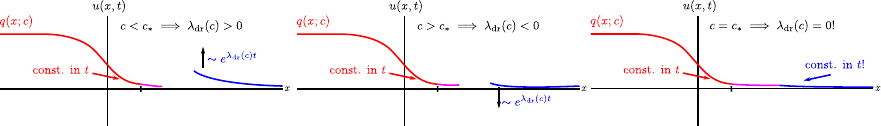}
		\caption{Schematic of the interior of a front (red) and the reconstructed front tail (blue) for $c < c_*$ (left), $c > c_*$ (middle) and $c = c_*$ (right). When $c = c_*$, the dynamics in the leading edge may be reconciled with the front in the wake.}
		\label{fig: matching}
	\end{center}
\end{figure}

\noindent \textbf{Background on front selection.} In a mathematical description, one often focuses on one-sided invasion processes, using \emph{steep} initial conditions which decay very rapidly to zero as $x \to \infty$ as a model for localized initial conditions, thereby focusing on a single front interface. A natural approach to predicting invasion speeds is then to investigate existence and stability of traveling front solutions $u(x,t) = q(x-ct;c)$ connecting $u = 0$ to a stable state $u_-$ in the wake. The difficulty is that invasion fronts typically exist for an open range of speeds $c$, and are all unstable in translation invariant function spaces due to the instability of the background state $u \equiv 0$. Using exponential weights to limit how perturbations can affect the tail decay of the front, one typically recovers spectral stability for a range of speeds $c \geq c_*$. However, these classes of perturbations do not allow one to consider steep initial data, for instance data supported on a half line, and solutions from steep initial data are typically observed to propagate with the minimal speed $c_*$. Thus, simple dynamical stability against restricted classes of perturbations is not sufficient to predict invasion speeds from localized or steep initial data.  

\noindent \textbf{Marginal stability as a selection mechanism}. To understand the invasion dynamics from steep initial data, in particular data which vanish for $x$ sufficiently large, a natural first candidate is to ``cut off the tail'' of a front solution $q(x;c)$ moving with speed $c$, by considering initial data of the form $\chi_L (x) q(x;c)$, where $\chi_L$ is some cutoff function that vanishes for $x$ sufficiently large. It is then natural to expect that the solution will rebuild its tail according to the dynamics at $+\infty$. To describe the solution, one then hopes to reconcile these leading edge dynamics with the dynamics in the bulk of the front, which should remain more or less constant in time in the frame moving with the front speed $c$. This is then only possible if the dynamics in the leading edge are neither growing nor decaying but also approximately constant in time. The dynamics in the leading edge are affected by the front speed $c$ which determines the choice of the co-moving frame: solutions in the leading edge exhibit pointwise temporal growth or decay $u(x,t) \sim e^{\nu_\mathrm{dr}(c) x} e^{ \lambda_\mathrm{dr} (c) t}$ depending on $c$. One then predicts invasion at the distinguished speed $c_*$ for which we have marginal pointwise stability in the leading edge, $\lambda_\mathrm{dr}(c_*) = 0$; see Figure \ref{fig: matching}. See \cite{CAMS, vanSaarloosReview, colleteckmann} for further details.


The proof of the analogous result for scalar higher order equations in \cite{CAMS} involves three main steps:
\begin{enumerate}
	\item The construction of an approximate solution which glues an invasion front traveling with the selected speed to a diffusive tail capturing the invasion dynamics of the leading edge. This approximate solution then governs the dynamics of open classes of genuine solutions. This construction was outlined via formal matched asymptotics in \cite{EbertvanSaarloos}, made rigorous in the scalar, second order case in \cite{NRRrefined}, and made rigorous for higher-order scalar equations in \cite{CAMS}. 
	\item Obtaining sharp estimates on the linearized dynamics near the corresponding invasion front, developed in \cite{SIMA} via a detailed analysis of the resolvent near the essential spectrum. 
	\item A delicate nonlinear argument which must control certain critical terms in the nonlinear iteration scheme resulting from the matching with the diffusive tail. 
\end{enumerate}

The most difficult step to adapt from the scalar to the systems case is the construction of the approximate solution with the diffusive tail in the leading edge. To study the dynamics in the leading edge, one passes to a moving frame with speed $c$ and linearizes about $u \equiv 0$, obtaining in the higher order scalar case
\begin{align}
u_t = \mathcal{P}(\partial_x)u + c \partial_x u + f'(0) u. \label{e: scalar linearized}
\end{align}
where $\mathcal{P}$ is a polynomial of order $2m$ satisfying the ellipticity condition $(-1)^{m+1} p_{2m} > 0$, where $p_{2m}$ is the highest order coefficient. The dynamics in the leading edge may be described by the \emph{dispersion relation} $d_c (\lambda, \nu) = 0$ obtained by substituting the Fourier-Laplace ansatz $u(x,t) \sim e^{\nu x} e^{\lambda t}$:
\begin{align}
d_c (\lambda, \nu) = \mathcal{P}(\nu) + c \nu + f'(0) - \lambda. \label{e: scalar dispersion relation}
\end{align}
 A crucial part of the marginal stability conjecture is the observation that pointwise stability in the linearization about $u \equiv 0$ may be predicted by the location of so-called \emph{pinched double roots} of the dispersion relation \cite{vanSaarloosReview, HolzerScheelPointwiseGrowth}. In particular, one typically assumes that there exists $\nu_* \in \R$ such that the dispersion relation has a double root at $(0, \nu_*)$, with generic expansion 
\begin{align}
	d_c (\lambda, \nu) = - \lambda + \alpha (\nu - \nu_*)^2 + \mathrm{O}\left((\nu-\nu_*)^3\right), \label{e: double root expansion}
\end{align}
valid near $(\lambda, \nu) = (0, \nu_*)$, where $\alpha > 0$. 
It follows that if $u$ solves the linearized problem \eqref{e: scalar linearized}, then the exponentially weighted variable $v(x,t) = e^{-\nu_* x} u(x,t)$ satisfies an equation of the form
\begin{align}
v_t = \alpha \partial_x^2 v + \mathrm{O}(\partial_x^3) v. \label{e: scalar leading edge}
\end{align}
This crucially relies on the fact that \eqref{e: scalar dispersion relation} implies that the symbol of the differential operator on the right hand side of \eqref{e: scalar linearized} can be expressed in terms of the dispersion relation $d_c (\lambda, \nu)$. Since the low-frequency dynamics can be seen to be dominant, one then views \eqref{e: scalar leading edge} as a perturbation of the heat equation, allowing the construction of a diffusive tail which can then be matched to an invasion front to obtain an approximate solution. 

However, in the case of multi-component reaction-diffusion systems \eqref{e: rd}, the relevant dispersion relation becomes 
\begin{align}
d_c (\lambda, \nu) = \det \left( D \nu^2 + c \nu I + f'(0) - \lambda I \right). 
\end{align}
The presence of the determinant significantly complicates the analysis in the leading edge: one can no longer directly express the symbol of the linearization about $u \equiv 0$
\begin{align}
M(\lambda, \nu, c) = D \nu^2 + c \nu I + f'(0) - \lambda I \label{e: original symbol def}
\end{align}
in terms of the dispersion relation as in the scalar case. We overcome this here by showing that, under only the assumption that the dispersion relation has a double root of the form \eqref{e: double root expansion}, for $\lambda, \nu$ small the symbol $M$ can be put via a change of coordinates into one of two \emph{diffusive normal forms}:
\begin{align}
M(\lambda, \nu-\eta_*, c) \sim \begin{pmatrix}
- \lambda + \alpha \nu^2 & \mathrm{O}(\lambda, \nu) \\
\mathrm{O}(\lambda, \nu^2) & \mathrm{O}(1) 
\end{pmatrix} \label{e: normal 1}
\end{align}
with $\alpha > 0$ or 
\begin{align}
M(\lambda, \nu-\eta_*, c) \sim \begin{pmatrix} - \lambda + \alpha \nu^2 &  \beta \nu + \mathrm{O}(\lambda, \nu^2) & \mathrm{O}(\lambda, \nu) \\
- \nu + \mathrm{O}(\lambda, \nu^2) & 1 + \mathrm{O}(\lambda, \nu) & \mathrm{O}(\lambda, \nu) \\
\mathrm{O}(\lambda, \nu^2) & \mathrm{O}(\lambda, \nu) & \mathrm{O}(1)
\end{pmatrix}, \label{e: normal 2}
\end{align}
with $\alpha + \beta > 0$ (in the case $n=2$, one simply ignores the last row and column of this matrix). 

In the case of \eqref{e: normal 1}, the leading order dynamics are then governed by a diffusion equation $v_t = \alpha v_{xx}$, with higher order perturbations resulting from coupling to the other components. In the second case \eqref{e: normal 2}, the leading order dynamics are governed by
\begin{align}
	u_t &= \alpha u_{xx} + \beta v_x, \\
	0 &= - u_x + v, 
\end{align}
leading to a diffusive equation of the form $u_t = (\alpha + \beta) u_{xx}$. Transforming to these normal forms therefore allows us construct a diffusive tail via a detailed analysis of the resulting equations in self-similar coordinates, as in the scalar case. We emphasize that the diffusive dynamics in the leading edge result directly from \eqref{e: double root expansion}, which captures generic marginal pointwise stability, and do not rely on the fact the the equation under consideration is second order or even parabolic. For instance, the construction here applies directly to the leading edge dynamics of the FitzHugh-Nagumo system considered in \cite{cartersch}, which is not strictly parabolic. 

We then follow the program outlined in \cite{CAMS} to match this diffusive tail with an invasion front in the wake. We adapt the sharp estimates for the linearization about this front from \cite{SIMA} to the multi-component case. With these two ingredients, a nonlinear stability argument for the approximate solution follows \emph{exactly} as in \cite{CAMS}, regardless of which of the two cases \eqref{e: normal 1} or \eqref{e: normal 2} we consider, demonstrating the robustness of this approach to front propagation which does not rely on comparison principles or the precise structure of the equation under consideration. 

Our main result, Theorem \ref{t: main}, universally reduces prediction of invasion speeds in reaction-diffusion systems to verifying spectral criteria, thereby reducing the infinite dimensional PDE problem to algebraic (finding double roots of the dispersion relation) and finite dimensional ODE (verifying existence and spectral stability of fronts) problems. This allows the use of a broad spectrum of new tools to make rigorous prediction of propagation speeds in invasion processes, including:
\begin{itemize}
	\item robust numerical methods \cite{ArndEigenvalues, Stablab} including validated numerics \cite{BeckJaquette} for spectral stability of nonlinear waves,
	\item geometric singular perturbation theory for existence \cite{Fenichel, cartersch} and spectral stability \cite{CarterdeRijkSandstede} of nonlinear waves in systems with multiple time scales,
	\item functional analytic methods regularizing singular perturbations and establishing existence and spectral stability of waves in singular limits \cite{AveryGarenaux, GohScheel, JensArnd},
	\item topological methods for existence of nonlinear waves \cite{vandenBerg, ArndCH1, ArndCH2},
	\item and numerical methods specifically designed for continuation and spectral stability of invasion fronts \cite{AveryHolzerScheel}. 
\end{itemize}

The remainder of the paper is organized as follows. In Section \ref{s: setup}, we formulate our conceptual assumptions and state our main results precisely. In Section \ref{s: overview}, we give some background on the marginal stability conjecture and linear spreading speeds. In Section \ref{s: colinear}, we construct the first normal form \eqref{e: normal 1}, and use self-similar coordinates to construct a diffusive tail and associated approximate solution. In Section \ref{s: linearly independent} we carry out the corresponding procedure for the second normal form \eqref{e: normal 2}. In Section \ref{s: linear estimates}, we adapt the linear estimates from \cite{SIMA} to the multi-component case, and then in Section \ref{s: stability argument} we explain how these estimates can be used to close a stability argument for the approximate solution, thereby proving Theorem \ref{t: main}. 
{\color{black} We conclude in Section \ref{s: discussion} with a discussion of applications and extensions of our results.} 

\subsection{Setup and main results}\label{s: setup}

We first give a precise mathematical formulation of the spectral assumptions encoded in the marginal stability conjecture, adapted from \cite{CAMS}. In the mathematical description of invasion processes, one often considers \emph{steep} initial data, which converge to zero very rapidly as $x \to \infty$, rather than initial data which is localized on both sides, for instance compactly supported. This captures the essential propagation dynamics while simplifying the mathematical description by focusing only on one front interface. We adopt this approach here.

We consider the reaction-diffusion system \eqref{e: rd}, in a moving frame with speed $c$
\begin{align}
u_t = Du_{xx} + c u_x + f(u). 
\end{align}
We assume that this system has at least two equilibria, $f(0) = f(u_-) = 0$. The linearization about $u \equiv 0$ is given by
\begin{align}
u_t = D u_{xx} + c u_x + f'(0) u, \label{e: rd lin}
\end{align}
with associated dispersion relation
\begin{align}
d_c (\lambda, \nu) = \det \left(D\nu^2 + c \nu I + f'(0) - \lambda I \right). \label{e: dispersion relation}
\end{align}
The location of \emph{pinched double roots} $(\nu_\mathrm{dr}(c), \lambda_\mathrm{dr}(c))$ of the dispersion relation predicts the temporal pointwise growth or decay $u(x,t) \sim e^{\nu_\mathrm{dr} (c) x} e^{\lambda_\mathrm{dr} (c) t}$ in the leading edge; see Section \ref{s: overview} below for further background on pinched double roots and linear spreading speeds. To capture marginal pointwise stability in the leading edge, we therefore assume that for some speed $c_*$, we have a marginally stable pinched double root $(\nu_\mathrm{dr} (c_*), \lambda_\mathrm{dr} (c_*)) = (\nu_*, 0)$. 

\begin{hyp}[Marginal pointwise stability] \label{hyp: dispersion reln}
	We assume there exists a speed $c_* > 0$ and $\nu_* < 0$ so that the following properties hold
	\begin{enumerate}[(i)]
		\item (Simple double root) For $\lambda$ small and $\nu$ near $\nu_*$, we have 
		\begin{align}
		d_{c_*} (\lambda, \nu) = d_{10} \lambda + d_{02} (\nu -\nu_*)^2 + \mathrm{O}\left((\nu-\nu_*)^3, \lambda^2, \lambda (\nu-\nu_*)\right), \label{e: hyp 1 simple pdr}
		\end{align}
		for some $d_{10}, d_{02} \in \R$ with $d_{10} d_{02} < 0$. 
		\item (Minimal critical spectrum) If $d_{c_*} (i \omega, ik + \nu_*) = 0$ for some $\omega, k \in \R$, then $\omega = k = 0$. 
		\item (No unstable spectrum) We have $d_{c_*}(\lambda, ik + \nu_*) \neq 0$ for any $k \in \R$ and any $\lambda \in \C$ with $\Re \lambda > 0$. 
	\end{enumerate}
\end{hyp}
Hypothesis \ref{hyp: dispersion reln}(i) assumes that we have a generic marginally stable double root for $c = c_*$. Conditions (ii) and (iii) guarantee that this double root is \emph{pinched} (again, see Section \ref{s: overview} for details), and that there are no unstable pinched double roots for $c = c_*$. Hypothesis \ref{hyp: dispersion reln} therefore captures marginal pointwise growth in the leading edge, and we refer to $c_*$ as the \emph{linear spreading speed}. We define $\eta_* = -\nu_*$, which captures the exponential decay rate of fronts in the leading edge. 

Next, we assume the existence of a front propagating with this linear spreading speed. 
\begin{hyp}[Existence of a critical front]\label{hyp: front existence}
	We assume there exists a solution to \eqref{e: rd} of the form 
	\begin{align}
	u(x,t) = q_* (x-c_*t), \quad \lim_{\xi \to -\infty} q_* (\xi) = u_-,\quad \lim_{\xi \to \infty} q_*(\xi) = 0. \label{e: fronts}
	\end{align}
	We refer to $q_*$ as the critical front. We further assume that $q_*$ has the generic asymptotics 
	\begin{align}
	q_* (x) = \left[ b (u_0 x + u_1) + a u_0 \right] e^{-\eta_* x} + \mathrm{O}(e^{-(\eta_*+ \eta) x}) \label{e: front asymptotics}
	\end{align}
	for some $a, b \in \R, u_0, u_1 \in \R^n$, and some $\eta > 0$.  
\end{hyp}
The asymptotics \eqref{e: front asymptotics} capture weak exponential decay resulting from the fact that Hypothesis \ref{hyp: dispersion reln} implies that the linearization about $u = 0$ in the first-order ODE formulation has a Jordan block of size $2$  \cite{HolzerScheelPointwiseGrowth}. This weak exponential decay then holds generically for fronts satisfying \eqref{e: fronts} \cite{AveryHolzerScheel, SIMA}. By replacing $q_*(x)$ by a suitable translate $q_*(x+\tilde{x}_0)$, we may without loss of generality assume that $b = 1$, which we do for the rest of the manuscript.  

We assume that the state $u_-$ selected by $q_*$ in the wake is strictly stable.  Via the Fourier transform, the associated dispersion relation
\begin{align}
d^- (\lambda, \nu) = \det (D \nu^2  + c_* \nu + f'(u_-) - \lambda I)
\end{align}
determines the spectrum $\Sigma^-$ of the linearization $D \partial_x^2 + c_* \partial_x + f'(u_-)$, with
\begin{align}
\Sigma^- = \{ \lambda \in \C: d^- (\lambda, ik) = 0 \text{ for some } k \in \R \}. 
\end{align}

\begin{hyp}[Stability in the wake]\label{hyp: stability on left}
	We assume that $\Re (\Sigma^-) < 0$. 
\end{hyp}

Hypothesis \ref{hyp: dispersion reln} gives a prediction for the invasion speed based on the linearization about $u \equiv 0$. If this prediction is correct, then the invasion process is referred to as \emph{linearly determined}, or \emph{pulled}. This prediction may fail, however; for instance, scalar equations of the form
	\begin{align}
	u_t = u_{xx} + u + \theta u^2 - u^3 
	\end{align}
may exhibit propagation from steep initial data at a rate faster than the linear spreading speed if $\theta$ is sufficiently large, a phenomenon referred to as \emph{nonlinearly determined} or \emph{pushed} invasion \cite{HadelerRothe, AveryHolzerScheel, vanSaarloosReview}. The marginal stability conjecture is easier to establish in the case of pushed invasion, since one can recover a spectral gap for the linearization while still allowing for perturbations which are sufficiently large to include steep initial data. We therefore focus here on pulled invasion. Pushed invasion occurs when the linearization about a pulled front has an unstable eigenvalue \cite{AveryHolzerScheel}, which we exclude as follows. 

Let 
\begin{align}
\mathcal{A} = D \partial_x^2 + c_* \partial_x + f'(q_*)
\end{align}
denote the linearization about the critical front $q_*$. Since $q_*(x)$ converges to the unstable state $u = 0$ as $x \to \infty$, the essential spectrum of $\mathcal{A}$ in a translation invariant space such as $L^p (\R)$ is unstable. We recover marginal spectral stability by restricting to exponentially localized perturbations, defining a smooth positive weight $\omega$ satisfying
\begin{align}
\omega(x) = \begin{cases}
e^{\eta_*x}, & x \geq 1, \\
1, & x \leq -1. 
\end{cases} \label{e: omega def}
\end{align}
We then define the weighted linearization via the conjugate operator
\begin{align}
\mcl g = \omega \mathcal{A} (\omega^{-1} g). 
\end{align}
As a consequence of Hypothesis \ref{hyp: dispersion reln}, Hypothesis \ref{hyp: stability on left}, and Palmer's theorem \cite{Palmer1, Palmer2}, the essential spectrum of $\mcl$ is marginally stable; see \cite[Figure 1]{CAMS}. To exclude pushed fronts, we assume that the point spectrum of $\mcl$ is stable. 

\begin{hyp}[No unstable point spectrum]\label{hyp: point spectrum}
	We assume that the weighted linearization $\mcl$ has no eigenvalues $\lambda$ with $\Re \lambda \geq 0$. We further assume that there is no bounded solution to the equation $\mcl u = 0$. 
\end{hyp}

\begin{remark}
	The case where there is a bounded solution to $\mcl u =0$ marks a transition from pulled to pushed front propagation; see \cite{AveryHolzerScheel} for further details. 
\end{remark}

We now define algebraic weights used to state our main result. Given $r_\pm \in \R$, define a smooth positive weight function $\rho_{r_-, r_+}$ satisfying
\begin{align}
\rho_{r_-, r_+} (x) = \begin{cases}
x^{r_-}, & x \leq -1, \\
x^{r_+}, & x \geq 1.  
\end{cases} \label{e: alg weights}
\end{align} 
We are now ready to state our main result. The result is phrased in terms of open sets of initial data $\mathcal{U}_\eps$, which are neighborhoods of an approximate solution which connects the critical front $q_*$ to a diffusive tail in the leading edge. 

\begin{thm}\label{t: main}
	Assume Hypotheses \ref{hyp: dispersion reln} through \ref{hyp: point spectrum} hold. Fix $0 < \mu < \frac{1}{8}$ and let $r = 2+\mu$. Then the critical front $q_*$ is selected in the sense of \cite[Definition 1]{CAMS}. More precisely, for each $\eps > 0$ there exists a set of initial data $\mathcal{U}_\eps \subseteq L^\infty (\R)$ such that the following hold.
	\begin{enumerate}
		\item For each $u_0 \in \mathcal{U}_\eps$, the solution $u$ to \eqref{e: rd} with initial data $u_0$ satisfies
		\begin{align}
		\sup_{x \in \R} |\rho_{0,-1} (x) \omega(x) [ u(x + \sigma(t), t) - q_*(x)]| < \eps, 
		\end{align}
		for all $t \geq t_*(u_0)$, sufficiently large, where 
		\begin{align}
		\sigma(t) = c_* t - \frac{3}{2 \eta_*} \log t + x_\infty (u_0)
		\end{align}
		for some $x_\infty (u_0) \in \R$. 
		\item $\mathcal{U}_\eps$ contains \emph{steep} initial data. More precisely, there exists $u_0 \in \mathcal{U}_\eps$ such that $u_0 (x) \equiv 0$ for $x$ sufficiently large. 
		\item $\mathcal{U}_\eps$ is open in the topology induced by the norm $\| f \| = \| \rho_{0, r} \omega f \|_{L^\infty}$. 
	\end{enumerate}
\end{thm}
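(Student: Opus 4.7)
The plan is to execute the three-step program of \cite{CAMS}, with the principal new ingredient being the reduction of the leading-edge dynamics of \eqref{e: rd} to a scalar diffusive equation via the normal forms \eqref{e: normal 1}--\eqref{e: normal 2}. I would first pass to the moving frame with speed $c_*$ and consider the matrix symbol $M(\lambda, \nu, c_*)$ from \eqref{e: original symbol def}. Hypothesis \ref{hyp: dispersion reln}(i) forces that at $(\lambda, \nu) = (0, \nu_*)$ the matrix $M$ has a nontrivial kernel on which the derivatives of the dispersion relation exhibit the generic form \eqref{e: hyp 1 simple pdr}; depending on whether the right and left null vectors of $M(0,\nu_*)$ are colinear or linearly independent, a local change of coordinates (performed in Sections \ref{s: colinear} and \ref{s: linearly independent}) puts $M$ into \eqref{e: normal 1} or \eqref{e: normal 2}. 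In each case the leading behavior of the critical mode is governed by a scalar heat equation with positive diffusion coefficient ($\alpha$ or $\alpha + \beta$), while the remaining modes decouple up to rapidly decaying (in $\nu$) contributions. The construction of these normal forms is in my view the main technical obstacle: it is the step specific to systems and has no analog in \cite{CAMS}.

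With the normal form in hand, I would construct the approximate solution $\vapp(x,t)$ by matched asymptotics. In the leading edge, after peeling off the exponential weight $\omega$, the critical-mode equation becomes a perturbation of the heat equation, and I would seek a slowly varying similarity profile in the self-similar variable $\xi = x/\sqrt{t}$ compatible with the weak algebraic--exponential prefactor $[u_0 x + u_1 + a u_0]e^{-\eta_* x}$ from \eqref{e: front asymptotics}. The logarithmic delay $\sigma(t) = c_* t - \tfrac{3}{2\eta_*}\log t + x_\infty$ is then forced by the requirement that the self-similar diffusive tail match this prefactor at the inner edge, recovering the Ebert--van Saarloos correction. Gluing the self-similar profile to $q_*(\cdot - \sigma(t))$ through a smooth cutoff yields $\vapp$, and by carefully quantifying the matching errors I obtain a residual that decays sufficiently fast in the algebraically weighted norm $\| \rho_{0, r} \omega \cdot \|$.

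The second ingredient is sharp estimates on the weighted linearization $\mcl$ about $q_*$, which I would obtain by adapting \cite{SIMA} to the multi-component case (this is carried out in Section \ref{s: linear estimates}). Hypotheses \ref{hyp: dispersion reln}, \ref{hyp: stability on left}, and \ref{hyp: point spectrum} together with Palmer's theorem guarantee that the spectrum of $\mcl$ lies in $\{\Re \lambda \leq 0\}$, touches the origin quadratically via \eqref{e: hyp 1 simple pdr}, and carries no resonance at $\lambda = 0$. The normal-form coordinates from the previous step are what make the resolvent analysis tractable in the systems setting: they reduce the singular behavior of the resolvent near $\lambda = 0$ to a scalar problem governed by the same double-root expansion as before. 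The output is a family of pointwise-in-$\lambda$ resolvent bounds weighted by $\rho_{r_-, r_+}$ which, by inverse Laplace transform, yield the algebraic semigroup decay rates needed to close the nonlinear argument.

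Finally, writing $u(x+\sigma(t),t) = \vapp(x,t) + w(x,t)$, the perturbation $w$ satisfies a semilinear problem whose forcing comprises the residual of $\vapp$ and the quadratic-and-higher nonlinearity of $f$. I would set up the same nonlinear iteration as in \cite{CAMS} in the weighted space with norm $\| \rho_{0, r} \omega \cdot \|$, allowing the translate $\sigma(t)$ (and hence $x_\infty(u_0)$) to be determined self-consistently by a modulation equation that projects out the critical direction. The subtle point is that the iteration involves critical nonlinear terms at the marginal decay rate which would ordinarily prevent closure; these are controlled exactly as in \cite{CAMS} by exploiting the $-\tfrac{3}{2\eta_* t}$ correction in $\sigma'(t)$ and cancellations between the quadratic nonlinearity and the diffusive tail. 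Steepness (part 2) follows because $\vapp$ vanishes for $x$ sufficiently large by construction of the cutoff, and openness (part 3) is an immediate consequence of the fixed-point formulation, completing the proof of Theorem \ref{t: main}.
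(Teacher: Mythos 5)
Your high-level three-step program (normal form in the leading edge, matched approximate solution, sharp linear estimates, nonlinear iteration) is the right architecture and matches the paper, but several of the specific mechanisms you propose would fail as stated.

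First, and most seriously, you propose to determine $\sigma(t)$ and $x_\infty(u_0)$ \emph{self-consistently} via a modulation equation that projects out the critical direction. There is no such direction to project onto: Hypothesis \ref{hyp: point spectrum} explicitly assumes that $\mcl u = 0$ has no bounded solution, so the weighted linearization has no neutral eigenmode; the translation mode $q_*'$ lies outside the weighted space precisely because of the weak $(u_0 x + u_1)e^{-\eta_* x}$ tail in Hypothesis \ref{hyp: front existence}. A projected modulation ODE would therefore be ill-posed. The paper instead fixes the shift $\sigma(t) = c_* t - \frac{3}{2\eta_*}\log(t+T) + \frac{3}{2\eta_*}\log T$ a priori, with $T$ a gluing/smallness parameter, and $x_\infty(u_0) = -\frac{3}{2\eta_*}\log T$ is determined simply by the choice of $T$, not by a solvability condition. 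Relatedly, to close the nonlinear argument after removing the modulation crutch you also need the specific change of variables $z = (t+T)^{-3/2} w$ (after which the critical drift term $-\frac{3}{2\eta_*(t+T)}\omega(\omega^{-1})' w$ becomes spatially compactly supported and hence harmless) and a bootstrap norm template controlling $z$, $z_x$ in several weighted norms simultaneously; this is not a cancellation between the nonlinearity and the diffusive tail as you suggest, but a structural removal of the obstruction followed by a bootstrap that tolerates the marginally relevant quadratic term.

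Second, the dichotomy between the two normal forms is not whether the right and left null vectors of $M(0,\nu_*)$ are colinear. It is whether the kernel vector $u_0$ (with $A^0 u_0 = 0$) and the first \emph{Jordan-chain} vector $u_1$ (with $A^0 u_1 + A^{01} u_0 = 0$, cf.\ Hypothesis \ref{hyp: DR matrix pencil}) are colinear; $e_\mathrm{ad}$, the left null vector, plays a different role (it fixes the first row of $S$). If you set up the normal form around the wrong pair, the elimination of the off-diagonal leading terms in Lemma \ref{l: normal form colinear} / \ref{l: lin ind normal form} will not go through.

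Third, the openness and steepness claims need more care. Openness does indeed follow because solutions depend continuously on the initial data in the chosen weighted $L^\infty$ topology. But steepness does \emph{not} follow because ``$\vapp$ vanishes for $x$ sufficiently large by construction of the cutoff'' — the time-dependent cutoff $\chi(y,t)$ glues the front $v^-$ on the left to the diffusive tail $v^+$ on the right, so for $y \geq (t+T)^\mu + 1$ the approximate solution is $v^+$, which has a Gaussian tail but is nowhere zero. The paper instead shows that a genuinely compactly supported modification of $\omega^{-1}\vapp(\cdot,0)$, truncated at $y \sim T^{1/2+\mu}$, is within $\eps$ of $\vapp(\cdot,0)$ in the $\rho_{0,r}\omega$-norm because the Gaussian is super-polynomially small there, and is therefore contained in $\mathcal{U}_\eps$ by openness. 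You should replace your argument with this one.
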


Theorem \ref{t: main} guarantees that there are open sets of steep initial data which remain close to the critical front $q_*$  for all time in an appropriate moving coordinate frame, in particular guaranteeing that the propagation speed $\sigma'(t)$ is given by the linear spreading speed $c_*$ up to the universal correction $- \frac{3}{2 \eta_* t}$. This correction was first established for scalar second order equations by Bramson \cite{Bramson1, Bramson2}. The universality of this correction, with the expression depending only on the leading order expansion of the dispersion relation via $\eta_*$, was conjectured by Ebert and van Saarloos \cite{EbertvanSaarloos} and recently confirmed rigorously \cite{CAMS} for scalar higher order equations. In addition to predicting the propagation speed, Theorem \ref{t: main} therefore further confirms the universality of this correction for multi-component systems. 

\begin{remark}
	In Theorem \ref{t: main}, initial data is measured with weight $\rho_{0,r}\omega$, while the solution is measured with weight $\rho_{0,-1} \omega$. This is analogous to standard $L^1$-$L^\infty$ estimates for solutions to the heat equation, leveraging a loss of spatial localization to gain temporal decay. 
\end{remark}

{\color{black}
\begin{remark}
	Theorem \ref{t: main} is a local result, in that initial data in $\mathcal{U}_\eps$ are initially close to $q_*$. More precisely, as mentioned above, the proof of Theorem \ref{t: main} relies on the construction of an approximate solution $\vapp(\xi, t)$ which glues $q_*$ to a diffusive tail. When $t$ is large, $\vapp$ itself is close to $q_*$; see Corollaries \ref{c: colinear front approximated by vapp} and \ref{c: lin ind front approximated by vapp}. Initial data in $\mathcal{U}_\eps$ are required to be close in a weighted norm  to this approximate solution frozen at a fixed large time $T$; see \eqref{e: U eps def} for the precise definition of $\mathcal{U}_\eps$. Nonetheless, each $\mathcal{U}_\eps$ contains some steep initial data, which vanish for $x$ sufficiently large. Moreover, requirements on the tail structure of the initial condition are not an arbitrary technical condition: see Section \ref{s: discussion} for further discussion. 
\end{remark}

Due to this perturbative structure, we can ultimately close a nonlinear iteration argument which relies primarily on sharp estimates on the linearization $e^{\mcl t}$ about $q_*$. This is in contrast to global convergence results in parabolic equations, which typically rely on a priori $L^\infty$ estimates on solutions obtained via the maximum principle \cite{ChenQi, ColletXin}. 

}

{ \color{black} We mention that Hypotheses \ref{hyp: dispersion reln} through \ref{hyp: point spectrum} are open within the class of reaction-diffusion systems, by a straightforward adaptation of \cite[Theorem 2]{CAMS}.} 


\noindent \textbf{Acknowledgments.} The author is grateful to Margaret Beck for helpful feedback on the presentation of the manuscript. This work was supported by the National Science Foundation through NSF-DMS-2202714. Any opinions, findings, and conclusions or recommendations expressed in this material are those of the author and do not necessarily reflect the views of the National Science Foundation. 

\section{Background and overview}\label{s: overview}

In this section, we give some background on relevant concepts in front propagation. 

\subsection{Pinched double roots and marginal pointwise growth}

Here we briefly introduce the concept of pinched double roots of the dispersion relation and explain how they can be used to predict pointwise growth rates. We follow the recent treatment in \cite{HolzerScheelPointwiseGrowth}, but similar ideas can be found in the earlier works \cite{vanSaarloosReview, bers1983handbook, brevdo, deelanger, Lifshitz}. 

\begin{figure}
	\begin{center}
		\includegraphics[width=1\textwidth]{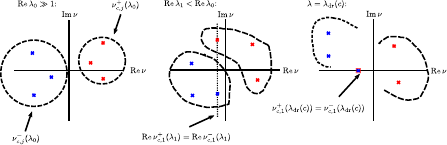}
		\caption{Schematic of spatial eigenvalues $\nu^\pm_{c, j}(\lambda)$ for $\lambda_0$ with $\Re \lambda_0 \gg 1$ (left), a sample $\lambda_1$ with $\Re \lambda_1 < \Re \lambda_0$ (middle), and at a pinched double root $\lambda = \lambda_\mathrm{dr}(c)$ (right). Red crosses denote spatial eigenvalues $\nu_{c,j}^+(\lambda)$ which were unstable for $\Re \lambda \gg 1$, while blue crosses denote spatial eigenvalues $\nu_{c,j}^-(\lambda)$ which were stable for $\Re \lambda \gg 1$. The dashed lines indicate contours which can be used to define Dunford integrals for the spectral projections $P^\mathrm{s/u}(\lambda)$. Notice that even when $\Re \nu_1^+(\lambda_1) = \Re \nu_1^- (\lambda_1)$, we can still separate the (previously) stable and unstable eigenvalues with smooth contours and therefore continue the spectral projections up to this point. When $\lambda = \lambda_\mathrm{dr}(c)$, there is no way to close the contours to separate the stable and unstable spatial eigenvalues, leading to a singularity of the continued spectral projections.}
		\label{fig: dunford}
	\end{center}
\end{figure}

Roots $\nu_c(\lambda)$ of the dispersion relation $d_c(\lambda, \nu)$ correspond precisely to eigenvalues of the matrix $A_c (\lambda)$ obtained by rewriting the resolvent equation
\begin{align}
(D \partial_x^2 + c \partial_x + f'(0) - \lambda) u = g
\end{align}
as a first order system in $U = (u, u_x) \in \R^{2n}$
\begin{align}
(\partial_x - A_c (\lambda)) U = \begin{pmatrix} 0 \\ D^{-1} g \end{pmatrix}. 
\end{align}
Associated to these equations are the resolvent kernel $G_\lambda (\xi)$ and the first order matrix Green's function $T_\lambda(\xi)$, which solve
\begin{align}
(D \partial_\xi^2 + c \partial_\xi + f'(0) - \lambda) G_\lambda = - \delta_0 I_n,  
\end{align}
and
\begin{align}
(\partial_\xi - A_c (\lambda)) T_\lambda = - \delta_0 I_{2n},
\end{align}
respectively, where $I_k$ is the identity matrix of size $k$. We call the eigenvalues $\nu_c(\lambda)$ of $A_c(\lambda)$ \emph{spatial eigenvalues}. 

Via the inverse Laplace transform, one may write the solution to the linearization \eqref{e: rd lin} with initial data $u_0(y) = g(y)$ in terms of the resolvent kernel as
\begin{align} 
u(x,t) = \frac{1}{2 \pi i} \int_\Gamma e^{\lambda t} \int_\R G_\lambda(x-y) g(y) \, dy d \lambda. \label{e: pointwise laplace}
\end{align}
In a classical semigroup approach, the sectorial contour $\Gamma$ is chosen to lie to the right of the essential spectrum of the linearization in a fixed function space. The essential spectrum depends on the choice of function space and may for instance be moved with exponential weights \cite{Sattinger}. However, we are interested here in measuring \emph{pointwise} growth or decay in a fixed window, and so do not want to fix a function space. Instead, when we restrict to initial data $g$ which is very localized, for instance compactly supported, one finds that the pointwise formula \eqref{e: pointwise laplace} is well-defined for any sectorial contour $\Gamma$ which lies to the right of the singularities of $\lambda \mapsto G_\lambda(\xi)$ for \emph{fixed} $\xi$. The boundary of the essential spectrum of $G_\lambda$ in a translation invariant function space is often associated with a loss of \emph{spatial} localization of $G_\lambda$, not a loss of analyticity in $\lambda$, so in many cases it is possible to push this contour past the essential spectrum in a given space if one is only interested in pointwise growth or decay in a finite window.

Pointwise temporal growth or decay of $u(x,t)$ can then be estimated by the location of pointwise singularities of $G_\lambda(\xi)$. The resolvent kernel $G_\lambda(\xi)$ may be recovered from the matrix Green's function $T_\lambda$, and the two have precisely the same singularities \cite[Lemma 2.1]{HolzerScheelPointwiseGrowth}. The matrix Green's function $T_\lambda$ may be constructed by using a splitting of stable and unstable eigenspaces of $A_c(\lambda)$ for $\Re \lambda \gg 1$. The stable and unstable eigenspaces may be constructed using a Dunford integral to define the associated spectral projections $P^\mathrm{s/u}(\lambda)$.  These projections may be analytically continued as $\Re \lambda$ decreases until the continued stable/unstable eigenspaces can no longer be separated \cite[Section 2]{HolzerScheelPointwiseGrowth}. Singularities of $T_\lambda$ then correspond to singularities of the continued spectral projections $P^\mathrm{s/u}(\lambda)$. A necessary condition for such a singularity is that an eigenvalue $\nu^-_c(\lambda)$ of $A_c(\lambda)$ which was stable for $\Re \lambda \gg 1$ collides with an eigenvalue $\nu^+_c(\lambda)$ which was unstable for $\Re \lambda \gg 1$. A pinched double root of the dispersion relation is, by definition, such a collision of eigenvalues: double root refers to the collision of these two eigenvalues, which correspond to roots of the dispersion relation, and ``pinched'' refers to the fact that the eigenvalues come in from different (stable/unstable) directions as $\Re \lambda$ is decreased. See Figure \ref{fig: dunford} for a schematic. 

In summary, all pointwise singularities of the resolvent kernel $G_\lambda (\xi)$, which can be used to estimate the pointwise growth of $u(x,t)$, correspond to pinched double roots of the dispersion relation. Not all pinched double roots give rise to singularities, as it is possible that two eigenvalues collide while having distinct limiting eigenspaces, so that the stable and unstable eigenspaces may still be separated \cite[Remark  5.5]{HolzerScheelPointwiseGrowth}. In Hypothesis \ref{hyp: dispersion reln}, we restrict to the generic case of \emph{simple} pinched double roots, which are robust \cite[Lemma 7.1]{CAMS} and always give rise to pointwise growth modes \cite[Lemma 4.4]{HolzerScheelPointwiseGrowth}. 

Hypothesis \ref{hyp: dispersion reln} therefore guarantees that for $c = c_*$, there is a pinched double root at $\lambda = 0$ giving rise to a pointwise growth mode, and no other unstable pinched double roots. One then predicts neither pointwise exponential growth nor pointwise exponential decay at this speed, a phenomenon we refer to as \emph{marginal pointwise stability}. See \cite{HolzerScheelPointwiseGrowth} for further background on pointwise growth modes and linear spreading speeds.

\subsection{The logarithmic delay}\label{s: log delay}

Even at the linear spreading speed,  the dynamics in the leading edge at the linear spreading speed are not quite stationary, but rather we have
\begin{align}
u(x,t) \sim u_0 \frac{x}{t^{3/2}} e^{-\eta_* x} e^{-x^2/(4 D_\mathrm{eff} t)}, \quad x \to \infty,
\end{align}
where $D_\mathrm{eff}$ is an effective diffusivity, and $u_0 \in \R^n$. These asymptotics explain the universal logarithmic delay $-\frac{3}{2 \eta_*} \log t$ in the position of the front: when $x = -\frac{3}{2 \eta_*} \log t$, we have $t^{-3/2} e^{-\eta_* x} = 1$, so that the dynamics in the leading edge are now approximately stationary at this delayed position, which allows for matching with the critical front in the wake. See Section \ref{s: colinear diffusive tail} for further details. 

\subsection{Preliminaries and notation}

We first provide a reformulation of Hypothesis \ref{hyp: dispersion reln}(i) which we rely on in constructing the diffusive normal forms \eqref{e: normal 1}, \eqref{e: normal 2} for the symbol of the linearization. First we introduce the notation
\begin{align}
A(\lambda, \nu) = M(\lambda, \nu -\eta_*, c_*) \label{e: symbol def}
\end{align}
for the symbol $A(\lambda, \nu)$ of the linearization about $u \equiv 0$, in the exponentially weighted space, where $M$ is the symbol in the original variables, given by \eqref{e: original symbol def}. 

We separate terms in $A(\lambda, \nu)$ by powers of $\lambda$, and $\nu$ as 
\begin{align}
A(\lambda, \nu) = A^0 + A^{10} \lambda + A^{01} \nu + A^{02} \nu^2. \label{e: A expansion}
\end{align}
\begin{hyp}[Simple double root] \label{hyp: DR matrix pencil}
	We assume there exist $u_0, u_1 \in \R^n$ such that 
	\begin{align}
	A^0 u_0 &= 0, \label{e: hyp kernel} \\ 
	A^0 u_1 + A^{01} u_0 &= 0. \label{e: hyp double root}
	\end{align}
	We further assume that $\ker A^0 = \spn (u_0)$ and let $e_\mathrm{ad}$ span the kernel of $(A^0)^T$. Finally, we assume
	\begin{align}
	\langle - u_0, e_\mathrm{ad} \rangle \langle A^{02} u_0 + A^{01} u_1, e_\mathrm{ad} \rangle < 0. \label{e: hyp simple DR}
	\end{align}
\end{hyp}

The following result, which is Lemma 2.1 of \cite{AveryHolzerScheel}, guarantees equivalence of Hypotheses \ref{hyp: dispersion reln}(i) and \ref{hyp: DR matrix pencil}. 

\begin{lemma}
	Hypothesis \ref{hyp: dispersion reln}(i) and \ref{hyp: DR matrix pencil} are equivalent. That is, Hypothesis \ref{hyp: dispersion reln}(i) implies Hypothesis \ref{hyp: DR matrix pencil}, and conversely if there exist $c_* > 0$ and $\nu_* < 0$ so that Hypothesis \ref{hyp: DR matrix pencil} holds, then Hypothesis \ref{hyp: dispersion reln}(i) holds as well. 
\end{lemma}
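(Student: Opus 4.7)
The plan is to reduce both directions of the equivalence to an analysis of a scalar reduced function obtained by Lyapunov--Schmidt reduction of the matrix pencil $A(\lambda, \nu)$ near $(\lambda, \nu) = (0,0)$. By the definition \eqref{e: symbol def}, $\det A(\lambda, \nu) = d_{c_*}(\lambda, \nu + \nu_*)$, so Hypothesis \ref{hyp: dispersion reln}(i) is equivalent to a particular second-order expansion of $\det A$ at the origin. In particular, $A^0 = A(0,0) = M(0, \nu_*, c_*)$ is exactly the matrix whose kernel enters the dispersion relation at the candidate double root.

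For the implication Hypothesis \ref{hyp: DR matrix pencil} $\Rightarrow$ Hypothesis \ref{hyp: dispersion reln}(i), I would first fix spectral projections: let $P$ project onto $\ker A^0 = \spn(u_0)$ along some complement $W$, and let $Q$ project onto a one-dimensional complement of $\mathrm{range}(A^0)$, chosen so that $\langle Q z, e_\mathrm{ad} \rangle$ equals $\langle z, e_\mathrm{ad} \rangle$. Writing $u = \alpha u_0 + \tilde{w}$ with $\tilde{w} \in W$, the equation $A(\lambda,\nu) u = 0$ splits into the range equation $(I-Q)A(\lambda,\nu)(\alpha u_0 + \tilde{w}) = 0$, which the implicit function theorem solves as $\tilde{w} = \alpha v(\lambda, \nu)$ with $v(0,0) = 0$, and a scalar bifurcation equation $\alpha \phi(\lambda, \nu) = 0$ where
\[
\phi(\lambda, \nu) = \bigl\langle A(\lambda, \nu)(u_0 + v(\lambda, \nu)), e_\mathrm{ad} \bigr\rangle.
\]
A standard Schur-complement argument shows $\det A(\lambda, \nu) = \kappa(\lambda, \nu) \phi(\lambda, \nu)$ with $\kappa(0,0) \neq 0$, so the expansions of $\det A$ and $\phi$ agree up to a nonvanishing multiplicative factor.

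Next I would compute $\phi$ to second order. Inverting $A^0$ on $\mathrm{range}(A^0)$ in the range equation and using the relation $A^0 u_1 = -A^{01} u_0$ from \eqref{e: hyp double root}, I find $v(\lambda, \nu) = \nu u_1 + O(\lambda, \nu^2)$. Substituting the expansion \eqref{e: A expansion} and using $A^{10} = -I$ (from the definition of $M$), together with the solvability identity $\langle A^{01} u_0, e_\mathrm{ad}\rangle = -\langle A^0 u_1, e_\mathrm{ad}\rangle = 0$, yields
\[
\phi(\lambda, \nu) = -\langle u_0, e_\mathrm{ad}\rangle\, \lambda + \langle A^{02} u_0 + A^{01} u_1, e_\mathrm{ad}\rangle\, \nu^2 + O\bigl(\lambda^2, \lambda \nu, \nu^3\bigr).
\]
The sign condition \eqref{e: hyp simple DR} is then precisely $d_{10} d_{02} < 0$ with $d_{10} = -\langle u_0, e_\mathrm{ad}\rangle$ and $d_{02} = \langle A^{02}u_0 + A^{01} u_1, e_\mathrm{ad}\rangle$; shifting the expansion of $\det A$ from $\nu$ back to $\nu - \nu_*$ via the identification above then gives Hypothesis \ref{hyp: dispersion reln}(i).

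For the converse, I would run the same reduction starting from the expansion \eqref{e: hyp 1 simple pdr}. The vanishing $d_{c_*}(0, \nu_*) = 0$ forces $\ker A^0$ nontrivial, while $d_{02} \neq 0$ forces the vanishing to be first order in the smallest eigenvalue of $A^0$, which excludes a two-dimensional kernel; thus $\ker A^0 = \spn(u_0)$, and $\ker (A^0)^T = \spn(e_\mathrm{ad})$ is one-dimensional as well. The vanishing of $\partial_\nu d_{c_*}(0, \nu_*)$ (no linear-in-$\nu$ term) is equivalent to the Fredholm solvability condition $\langle A^{01} u_0, e_\mathrm{ad}\rangle = 0$, which produces a $u_1$ solving \eqref{e: hyp double root}. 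Applying the same formula for the expansion of $\phi$ shows the coefficients match, and the sign hypothesis $d_{10} d_{02} < 0$ transfers directly to \eqref{e: hyp simple DR}. The main obstacle is the bookkeeping of signs and the identification $\det A = \kappa \phi$ with $\kappa(0,0) \neq 0$; a secondary subtlety is that $u_1$ is only defined modulo $u_0$, but the inner product $\langle A^{02} u_0 + A^{01} u_1, e_\mathrm{ad}\rangle$ is invariant under $u_1 \mapsto u_1 + t u_0$ precisely because $\langle A^{01} u_0, e_\mathrm{ad}\rangle = 0$, so Hypothesis \ref{hyp: DR matrix pencil} is well-posed.
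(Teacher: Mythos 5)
Your core strategy---Lyapunov--Schmidt reduction of the matrix pencil $A(\lambda,\nu)$ to a scalar bifurcation function $\phi(\lambda,\nu)$, the factorization $\det A = \kappa\,\phi$ with $\kappa(0,0)\neq 0$, and the second-order expansion of $\phi$---is essentially the same route the paper takes; your explicit computation of $\phi$ and the observation that $\langle A^{02}u_0 + A^{01}u_1, e_\mathrm{ad}\rangle$ is invariant under $u_1 \mapsto u_1 + t\,u_0$ make the argument somewhat more self-contained than the paper, which instead invokes the general theory of nonlinear eigenvalue problems (citing Voss) to identify Jordan chain length with the order of the root of the determinant. Both buy the same thing; yours trades a citation for a short direct calculation.

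There is, however, one concrete slip in your second direction. You assert that ``$d_{02}\neq 0$ forces the vanishing to be first order in the smallest eigenvalue of $A^0$, which excludes a two-dimensional kernel.'' That is not right: a two-dimensional kernel of $A^0$ with two separate Jordan chains of length one each already produces a root of order exactly two in $\det A(0,\cdot)$ (for instance $A(0,\nu) = \nu I_2$ gives $\det = \nu^2$ with a two-dimensional kernel), so $d_{02}\neq 0$ alone does not constrain the kernel dimension. The one-dimensionality of $\ker A^0$ is in fact a consequence of $d_{10}\neq 0$: since $A^{10} = -I$, one has $A(\lambda, 0) = A^0 - \lambda I$, so the left side of \eqref{e: hyp 1 simple pdr} evaluated at $\nu = \nu_*$ is just the characteristic polynomial of $A^0$, and $d_{10}\neq 0$ says precisely that $\lambda = 0$ is an algebraically---hence geometrically---simple eigenvalue of $A^0$. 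Replacing the $d_{02}$ argument with this classical-eigenvalue observation repairs the step; the rest of your argument (Fredholm solvability giving $u_1$, matching the $\phi$-expansion, and the sign transfer via $\kappa^2 > 0$) is sound.
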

\begin{proof}
	We view the equation $A(\lambda,\nu) u = 0$ near $(\lambda, \nu) = 0$ either as a classical eigenvalue problem in $\lambda$, with $\nu = 0$, or as a nonlinear eigenvalue problem in $\nu$ with $\lambda = 0$. Hypothesis \ref{hyp: dispersion reln}(i) requires that $\lambda = 0$ be an algebraically simple eigenvalue to the classical eigenvalue problem $A(\lambda, 0)u = 0$, and that $\nu = 0$ is an algebraically double solution to the nonlinear eigenvalue problem $A(0, \nu)u = 0$, where algebraic multiplicity is defined as the order of the root of the determinant. The former condition is equivalent to the two conditions $\ker (A^0) = \spn (u_0)$ and $\langle u_0, e_\mathrm{ad} \rangle \neq 0$, which guarantee that the kernel is simple and that $u_0$ is not in the range of $A^0$ and hence does not belong to a nontrivial Jordan chain. 
	
	Conditions \eqref{e: hyp kernel}-\eqref{e: hyp double root} together with $\langle A^{02} u_0 + A^{01} u_1, e_\mathrm{ad} \rangle \neq 0$ from \eqref{e: hyp simple DR} guarantee that $\nu = 0$ has an associated Jordan chain of length 2 for the nonlinear eigenvalue problem $A(0, \nu) u = 0$. The length of this Jordan chain then corresponds with the algebraic multiplicity defined as the order of the root of the determinant \cite{Voss}, and so this is equivalent to the requirement in Hypothesis \ref{hyp: dispersion reln}(i) that $\nu = \nu_*$ be a double root of the associated dispersion relation. For further background on nonlinear eigenvalue problems, see \cite{Voss}. 
	
	It only remains to verify that \eqref{e: hyp simple DR} is equivalent to $d_{10}d_{02} < 0$ from Hypothesis \ref{hyp: dispersion reln}. This follows from a Lyapunov-Schmidt reduction, which recovers the dispersion relation from Hypothesis \ref{hyp: DR matrix pencil} by finding all solutions to $A(\lambda, \nu) u = 0$ in a neighborhood of $(\lambda, \nu) = (0,0)$ via a reduced scalar equation computed by projecting onto the cokernel. 
\end{proof}

Throughout the remainder of the paper, we assume that Hypotheses \ref{hyp: dispersion reln} through \ref{hyp: DR matrix pencil} hold. We can now understand the distinction between the two normal forms \eqref{e: normal 1} and \eqref{e: normal 2}. If the vectors $u_0$ and $u_1$ introduced in Hypothesis \ref{hyp: DR matrix pencil} are co-linear, then the symbol $A(\lambda,\nu)$ may be put in the normal form \eqref{e: normal 1}, while the other form \eqref{e: normal 2} applies when $u_0$ and $u_1$ are linearly independent. 

\noindent \textbf{Function spaces.} Recall the definition \eqref{e: alg weights} of the algebraic weight $\rho_{r_-, r_+}$. Given such a weight, we define the algebraically weighted Sobolev space $W^{k,p}_{r_-, r_+} (\R, \C^n)$ through the norm
\begin{align}
\| g \|_{W^{k,p}_{r_-, r_+}} = \| \rho_{r_-, r_+} g \|_{W^{k,p}}, 
\end{align}
where $W^{k,p} (\R, \C^n)$ is the standard Sobolev space of weakly differentiable functions up to order $k$ with integrability index $1 \leq p \leq \infty$. If $k = 0$, we write $W^{k,p}_{r_-, r_+} (\R, \C^n) = L^p_{r_-, r_+} (\R, \C^n)$, with corresponding notation for the norms. 

We will also need general exponential weights for the proofs of our linear estimates. Given $\eta_\pm \in \R$, we define a smooth positive weight function $\omega_{\eta_-, \eta_+}$ satisfying
\begin{align}
\omega_{\eta_-, \eta_+} (x) = \begin{cases}
e^{\eta_- x}, &x \leq -1, \\
e^{\eta_+ x}, &x \geq 1. 
\end{cases}
\end{align}
We then define an exponentially weighted Sobolev space $W^{k,p}_{\mathrm{exp}, \eta_-, \eta_+} (\R, \C^n)$ through the norm 
\begin{align}
\| g \|_{W^{k,p}_{\mathrm{exp}, \eta_-, \eta_+}} = \| \omega_{\eta_-, \eta_+} g \|_{W^{k,p}}. 
\end{align}
As for the algebraically weighted spaces, if $k = 0$ we let $W^{k,p}_{\mathrm{exp}, \eta_-, \eta_+} (\R, \C^n) = L^p_{\mathrm{exp}, \eta_-, \eta_+} (\R, \C^n)$ with corresponding notation for the norms. When $p = 2$, we let $H^k_{\mathrm{exp}, \eta_-, \eta_+} (\R, \C^n) := W^{k, p}_{\mathrm{exp}, \eta_-, \eta_+} (\R, \C^n)$. 

\noindent \textbf{Additional notation.} Given two Banach spaces $X$ and $Y$, we let $\mathcal{B}(X,Y)$ denote the space of bounded linear operators from $X$ to $Y$, equipped with the operator norm. Given $\delta > 0$, we let $B(0,\delta)$ denote the unit ball of radius $\delta$ centered at the origin in the complex plane. We may sometimes abuse notation slightly by writing a function $u(\cdot, t)$ as $u(t)$, suppressing the spatial dependence and viewing $u(t)$ as an element of some Banach space at a fixed time. We let $\langle x \rangle = (1+x^2)^{1/2}$.

\section{Constructing the approximate solution --- co-linear case}\label{s: colinear}
In this section, we analyze the diffusive dynamics in the leading edge under the assumption that Hypothesis \ref{hyp: DR matrix pencil} holds, with $u_0$ and $u_1$ co-linear. Without loss of generality, we may then take $u_0 = u_1$. 

\subsection{Diffusive normal form}
We show that in this case, the symbol $A(\lambda, \nu)$ may be transformed into the form \eqref{e: normal 1}
\begin{lemma}[Diffusive normal form --- co-linear case]\label{l: normal form colinear}
	Assume Hypothesis \ref{hyp: DR matrix pencil} holds with $u_0 = u_1$. Then there exist invertible matrices $S,Q \in \R^{n \times n}$ such that 
	\begin{align}
	B(\lambda, \nu) := S A(\lambda, \nu) Q = \begin{pmatrix}
	-\lambda + b_{11}^{02} \nu^2 & b_{12} (\lambda, \nu) \\
	b_{21} (\lambda, \nu) & b_{22}(\lambda, \nu), 
	\end{pmatrix}
	\label{e: colinear normal form}
	\end{align}
	where $ b_{11}^{02} > 0$, and $b_{21}(\lambda, \nu) \in \C^{n-1 \times 1}, b_{12}(\lambda, \nu) \in \C^{1 \times n-1}$, and $b_{22} (\lambda, \nu) \in \C^{n-1 \times n-1}$, are polynomials which satisfy 
	\begin{align}
	b_{12} (\lambda, \nu) &= b_{12}^{01} \nu + \mathrm{O}(\lambda, \nu^2) \\
	b_{21} (\lambda, \nu) &= b_{21}^{10} \lambda + b_{21}^{02} \nu^2,\\
	b_{22} (\lambda, \nu) &= b_{22}^{00} + b_{22}^{01} \nu + \mathrm{O}(\lambda, \nu^2),
	\end{align}
	where $b_{22}^{00} \in \C^{n-1 \times n-1}$ is invertible. 
\end{lemma}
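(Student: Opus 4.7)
The strategy is to build $Q$ and $S$ so that their first column and first row are aligned with the kernel vectors identified in Hypothesis \ref{hyp: DR matrix pencil}, and then to check that this choice already forces the desired block structure. Normalizing $e_\mathrm{ad}$ so that $\langle u_0, e_\mathrm{ad}\rangle = 1$ (which is possible since $\langle u_0, e_\mathrm{ad}\rangle \neq 0$ by \eqref{e: hyp simple DR}), the key algebraic input is that in the co-linear case $u_1 = u_0$, the Jordan chain relation \eqref{e: hyp double root} together with \eqref{e: hyp kernel} immediately yields $A^{01} u_0 = 0$. This degeneracy of $A^{01}$ on $\ker A^0$ is what will make the $(2,1)$ block $b_{21}$ vanish to order $\nu$, and it is the reason the co-linear case is qualitatively simpler than the linearly independent case treated in Section \ref{s: linearly independent}.

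I would then set $Q = [u_0 \mid Q_2]$, where the columns of $Q_2 \in \R^{n \times (n-1)}$ span any fixed complement of $\mathrm{span}(u_0)$, and $S = \begin{pmatrix} e_\mathrm{ad}^T \\ S_2 \end{pmatrix}$, where $S_2 \in \R^{(n-1) \times n}$ is chosen with one-dimensional kernel transverse to $\mathrm{range}(A^0)$. A simple concrete choice is $\ker S_2 = \mathrm{span}(u_0)$, which is transverse to $\mathrm{range}(A^0) = \{v : \langle v, e_\mathrm{ad}\rangle = 0\}$ by the normalization. Both $Q$ and $S$ are then invertible. Using $A^{10} = -I$, which follows immediately from the definition \eqref{e: symbol def} of $A$ together with $M(\lambda, \nu, c) = D\nu^2 + c\nu I + f'(0) - \lambda I$, the four blocks of $B(\lambda, \nu) := S A(\lambda, \nu) Q$ can be computed directly from the expansion \eqref{e: A expansion}.

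For the $(1,1)$ entry $e_\mathrm{ad}^T A(\lambda, \nu) u_0$, the terms $e_\mathrm{ad}^T A^0 u_0$ and $\nu\, e_\mathrm{ad}^T A^{01} u_0$ both vanish, leaving exactly $-\lambda + \langle A^{02} u_0, e_\mathrm{ad}\rangle \nu^2$; the sign condition \eqref{e: hyp simple DR} specialized to $u_1 = u_0$ with $A^{01} u_0 = 0$ then forces $b_{11}^{02} := \langle A^{02} u_0, e_\mathrm{ad}\rangle > 0$. The constant term of $b_{12}$ vanishes because $e_\mathrm{ad}^T A^0 = 0$, and the constant and $\nu$ terms of $b_{21}$ vanish because $A^0 u_0 = 0$ and $A^{01} u_0 = 0$ respectively; no higher-order terms appear in $b_{21}$ since $A$ is polynomial of degree two in $\nu$ and degree one in $\lambda$. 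The one point that I expect to be the main obstacle is invertibility of $b_{22}^{00} = S_2 A^0 Q_2$, but this reduces to a rank count: $A^0 Q_2$ maps $\C^{n-1}$ isomorphically onto $\mathrm{range}(A^0)$ because $Q_2$ is complementary to $\ker A^0$ and $\mathrm{range}(A^0)$ is $(n-1)$-dimensional, and $S_2$ restricts to an isomorphism from $\mathrm{range}(A^0)$ onto $\C^{n-1}$ by the transversality of $\ker S_2$ with $\mathrm{range}(A^0)$. Composing these two isomorphisms completes the verification.
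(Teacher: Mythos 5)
Your proof is correct and follows essentially the same approach as the paper: both align the first column of $Q$ with $u_0$ and the first row of $S$ with $e_\mathrm{ad}$, exploit the co-linear simplification $A^{01}u_0 = 0$ of the Jordan-chain condition, read off $b_{11}^{02}>0$ from the sign hypothesis, and obtain invertibility of $b_{22}^{00}$ from simplicity of $\ker A^0$. The only cosmetic difference is that you fix $\ker S_2 = \mathrm{span}(u_0)$ and argue constructively via a composition of isomorphisms, whereas the paper leaves the remaining rows of $S$ arbitrary and concludes by contradiction (a nontrivial kernel of $b_{22}^{00}$, combined with $b_{12}(0,0)=0$, would enlarge $\ker B^0$).
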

\begin{proof}
	We first let $S$ and $Q$ be arbitrary invertible matrices, and denote 
	\begin{align}
	B(\lambda, \nu) = S A(\lambda, \nu) Q = \begin{pmatrix}
	b_{11} (\lambda, \nu) & b_{12} (\lambda, \nu) \\
	b_{21} (\lambda, \nu) & b_{22} (\lambda, \nu)
	\end{pmatrix}.
	\end{align}
	We then use our freedom to choose $S$ and $Q$ to eliminate terms in $b_{ij}$ which do not match the normal form \eqref{e: colinear normal form}. We expand $B$ as
	\begin{align}
	B(\lambda, \nu) = B^0 + B^{10} \lambda + B^{01} \nu + B^{02} \nu^2. \label{e: B expansion}
	\end{align}
	
	\emph{Step 1: Requiring $b_{11} (0,0) = 0, b_{12} (0,0) = 0$, and $b_{21} (0,0) = 0$.} Choose the first column of $Q$ to be equal to $u_0$, so that $Q e_0 = u_0$, where $e_0 = (1,0, ..., 0)^T$ is the first standard basis vector. This implies that $e_0$ is in the kernel of $B^0$, since 
	\begin{align}
	B^0 e_0 = S A^0 Q e_0 = S A^0 u_0,
	\end{align}
	and $u_0 \in \ker A^0$. Hence the first column of $B^0$ is equal to zero, i.e. $b_{11} (0,0) = 0$ and $b_{21} (0,0) = 0$. Similarly, choosing the first row of $S$ to be $c e_\mathrm{ad}^T$, for some $c \in \R$ which we will choose later, we find $S^T e_0 = c e_\mathrm{ad}$, and so $\ker ((B^0)^T) = \spn (e_0)$, which implies that $b_{12} (0,0) = 0$. 
	
	\emph{Step 2: Verifying $b_{11}(\lambda, \nu)$ and $b_{21} (\lambda, \nu)$ are $\mathrm{O}(\lambda,\nu^2).$} Since we are assuming that $u_1 = u_0 \in \ker A^0$, condition \eqref{e: hyp double root} of Hypothesis \ref{hyp: DR matrix pencil} reduces to 
	\begin{align}
	A^{01} u_0 = - A^0 u_0 = 0,. \label{e: co normal form A01 u0}
	\end{align}
	From this fact together with the expansion \eqref{e: B expansion} and the choice $Qe_0 = u_0$, we see that
	\begin{align*}
	B^{01} e_0 = S A^{01} Q e_0 = S A^{01} u_0 = 0, 
	\end{align*}
	so that the first column of $B^{01}$ is equal to zero, which implies together with the previous step that $b_{11} (\lambda, \nu)$ and $b_{21} (\lambda, \nu)$ are $\mathrm{O}(\lambda, \nu^2)$. We may therefore write $b_{11} (\lambda,\nu) = b_{11}^{10} \lambda + b_{11}^{02} \nu^2$ for some $b_{11}^{10}, b_{11}^{02} \in \R$. 
	
	\emph{Step 3: Requiring $b_{11}^{10} = -1$ and $b_{11}^{02}$ > 0.} Note that \eqref{e: co normal form A01 u0} implies that condition \eqref{e: hyp simple DR} of Hypothesis \ref{hyp: DR matrix pencil} reduces to 
	\begin{align}
	\langle -u_0, e_\mathrm{ad}\rangle \langle A^{02} u_0, e_\mathrm{ad}\rangle < 0. \label{e: co normal form step 3 1}
	\end{align}
	On the other hand, comparing the expansion \eqref{e: B expansion} for $B(\lambda, \nu)$ with the expansion \eqref{e: A expansion} for $A(\lambda, \nu)$ implies that 
	\begin{align}
	b_{11}^{10} = \langle - SQ e_0, e_0 \rangle, \quad b_{11}^{02} = \langle S A^{02} Q e_0, e_0 \rangle.
	\end{align}
	Using the conditions $S^T e_0 = c e_\mathrm{ad}$ and $Q e_0 = u_0$, we see that 
	\begin{align*}
	b_{11}^{10} = - \langle Q e_0, S^T e_0 \rangle = - c \langle u_0, e_\mathrm{ad} \rangle.
	\end{align*}
	Choosing $c = \langle u_0, e_\mathrm{ad} \rangle^{-1}$, we then obtain $b_{11}^{10} = -1$. Similarly, we have 
	\begin{align}
	b_{11}^{02} = \langle S A^{02} Q e_0, e_0 \rangle = \langle A^{02} Q e_0, S^T e_0 \rangle =  \langle u_0, e_\mathrm{ad} \rangle^{-1} \langle A^{02} u_0, e_\mathrm{ad} \rangle > 0
	\end{align}
	by \eqref{e: co normal form step 3 1}. 
	
	\emph{Step 4: Verifying $b_{22}^{00}$ is invertible.} By assumption, the kernel of $A^0$ is one dimensional, and so also the kernel of $B^0$ is one dimensional (since $S$ and $Q$ are invertible) and spanned by $e_0$. The submatrix $b_{22}^{00}$ must therefore be invertible, since a nontrivial kernel of this matrix would imply that the kernel of $B^0$ has dimension greater than one. 
\end{proof}

\subsection{Constructing the diffusive tail}\label{s: colinear diffusive tail}
We now use the normal form constructed in Lemma \ref{l: normal form colinear} to analyze the dynamics of the linearization about $u = 0$, and construct a diffusive tail which can be matched to the invasion front. As mentioned in Section \ref{s: log delay}, we need to incorporate a logarithmic delay in the position of the front in order to make the dynamics in the leading edge roughly constant in time in this frame. We therefore introduce the shifted variable
\begin{align}
y = x - c_*t + \frac{3}{2 \eta_*} \log (t+T) - \frac{3}{2 \eta_*} \log T. \label{e: y def}
\end{align}
The parameter $T$ will be chosen large so that the diffusive tail will be initially well-developed, ultimately allowing us to close a perturbative stability argument. We let $U(y,t) = u(x,t)$, so that the system \eqref{e: rd} becomes 
\begin{align}
U_t = D U_{yy} + \left(c_* - \frac{3}{2 \eta_* (t+T)} \right) U_y + f(U). \label{e: U eqn}
\end{align}
We then use the exponential weight $\omega$ defined by \eqref{e: omega def} to suppress the instability in the leading edge, defining $v(y,t) = \omega(y) U(y,t)$, so that $v$ solves 
\begin{align}
v_t = D \omega \partial_{yy} (\omega^{-1} v) + \left(c_* - \frac{3}{2 \eta_* (t+T)} \right) \omega \partial_y (\omega^{-1} v) + \omega f(\omega^{-1} v). 
\end{align}
We extract the linear term in $\omega f(\omega^{-1} v)$, so that we may rewrite this equation as 
\begin{align}
v_t = D \omega \partial_{yy} (\omega^{-1} v) + \left(c_* - \frac{3}{2 \eta_* (t+T)} \right) \omega \partial_y (\omega^{-1} v) + f'(0) v + \omega N(\omega^{-1} v), \label{e: v eqn}
\end{align}
where
\begin{align}
N(\omega^{-1} v) = f(\omega^{-1} v) - f'(0) \omega^{-1} v
\end{align}
satisfies
\begin{align}
| \omega N(\omega^{-1} v)| \leq C(B) \omega^{-1} |v|^2 
\end{align}
provided $\| \omega^{-1} v \| \leq B$, by Taylor's theorem. Since $\omega(y) = e^{\eta_*y}$ for $y \geq 1$, by the definition \eqref{e: symbol def} of the symbol $A(\lambda, \nu)$, the equation \eqref{e: v eqn} becomes, for $y \geq 1$,
\begin{align}
- A(\partial_t, \partial_y)  v = e^{\eta_* y} \left( - \frac{3}{2 \eta_* (t+T)} \right) \partial_y (e^{-\eta_* y} v) + e^{\eta_* y} N(e^{-\eta_*y}). 
\end{align}
Rewriting slightly, we obtain 
\begin{align}
F_\mathrm{res}^+ [v] := -A(\partial_t, \partial_y) v - \frac{3}{2 (t+T)} v +  \frac{3}{2 \eta_* (t+T)} v_y - e^{\eta_* y} N(e^{-\eta_* y} v) = 0. \label{e: v eqn pencil form}
\end{align}
We will see that the first two terms are dominant, and determine the leading order behavior of our diffusive tail. 

To transform into the normal form constructed above, we choose $Q$ and $S$ as in Lemma \ref{l: normal form colinear}, define $\Phi = Q^{-1} v$, apply $S$ to both sides of \eqref{e: v eqn pencil form}, and thereby see that for $y \geq 1$, $\Phi$ satisfies
\begin{align}
\tilde{F}_\mathrm{res}^+ [\Phi] := - B(\partial_t, \partial_y) \Phi 
 - \frac{3}{2 (t+T)} SQ \Phi + \frac{3}{2 \eta_* (t+T)} SQ \Phi_y - S e^{\eta_* y} N(e^{-\eta_* y} Q\Phi) = 0. \label{e: colinear Phi eqn}
\end{align}
To isolate the leading order diffusive dynamics, we let $\Phi = (\phi^\mathrm{I}, \phih)^T \in \R \times \R^{n-1}$. By Lemma \ref{l: normal form colinear}, the equation \eqref{e: colinear Phi eqn} then has the form
\begin{multline}
\begin{pmatrix}
\partial_t - b_{11}^{02} \partial_y^2 & -b_{12} (\partial_t, \partial_y) \\
-b_{21} (\partial_t, \partial_y) & -b_{22} (\partial_t, \partial_y) 
\end{pmatrix}
\begin{pmatrix}
\phiI \\ \phi^\mathrm{h} 
\end{pmatrix}
- \frac{3}{2 (t+T)} SQ \begin{pmatrix} \phiI \\ \phi^\mathrm{h} \end{pmatrix} + \frac{3}{2 \eta_* (t+T)} SQ \begin{pmatrix}\phiI_y \\ \phi^\mathrm{h}_y \end{pmatrix} \\ - S e^{\eta_*y} N \left( e^{-\eta_*y} Q \begin{pmatrix} \phiI \\ \phi^\mathrm{h} \end{pmatrix}  \right) = 0. \label{e: colinear system}
\end{multline}
The leading order dynamics are governed by the terms $\partial_t \phiI - b_{11}^{02} \partial_y^2 \phiI - \frac{3}{2 (t+T)} P^\mathrm{I} SQ (\phiI, 0)^T$, where $P^\mathrm{I} (f_1, ..., f_n)^T = f_1$. However, in order to construct an approximate solution with residual error sufficiently small to close a nonlinear stability argument, we must explicitly capture the effects of several higher order terms. To separate relevant from irrelevant terms, we define, using the expansions from Lemma \ref{l: normal form colinear},
\begin{align}
\tilde{b}_{12} (\partial_t, \partial_t) &= b_{12}(\partial_t, \partial_x) - b_{12}^{01} \partial_x = \mathrm{O}(\partial_t, \partial_x^2), \\
\tilde{b}_{22} (\partial_t, \partial_x) &= b_{22}(\partial_t, \partial_x) - b_{22}^{00} - b_{22}^{01} \partial_x = \mathrm{O}(\partial_t, \partial_x^2). 
\end{align}

We now heuristically explain which terms are relevant. Using that $P^\mathrm{I} SQ e_0 = 1$, we see that the leading order equation has the form
\begin{align*}
\phiI_t = \Deff \phiI + \frac{3}{2(t+T)} \phiI.
\end{align*}
Using self-similar variables to compute asymptotics of $\phiI$, we will see that this equation has a solution of the form
\begin{align}
\phiI \sim (y+y_0) e^{-(y+y_0)^2/(4 \Deff (t+T))}. \label{e: colinear formal leading order}
\end{align}
To close a nonlinear stability argument as in \cite{CAMS}, we need the residual error from inserting an approximate solution $\Phi^+$ into $\tilde{F}_\mathrm{res}^+$ to satisfy
\begin{align}
\sup_{y \in \R} \left| \langle y \rangle^{2 + \mu} \tilde{F}_\mathrm{res}^+ [\Phi^+] (y,t) \right| \leq \frac{C}{(t+T)^a} \label{e: residual desired}
\end{align}
for some $\mu, a > 0$, as in \cite[Lemma 2.4]{CAMS}. From \eqref{e: colinear system}, we expect that at leading order 
\begin{align}
\phih \sim -(b_{22}^{00})^{-1} b_{21} (\partial_t, \partial_y) \phiI. \label{e: colinear phih formal leading order}
\end{align}
From Lemma \ref{l: normal form colinear}, we know that $b_{21}(\partial_t, \partial_y) = \mathrm{O}(\partial_t, \partial_y^2)$. Note that if $\phiI$ has the form \eqref{e: colinear formal leading order}, then $\partial_t \phiI \sim \partial_y^2 \phiI$ on the diffusive length scale $y \sim \sqrt{t+T}$, and from a short calculation we find
\begin{align}
\langle y \rangle^{2 + \mu} \partial_y^k \phiI (y,t) \sim \frac{\langle y \rangle^{2 + \mu}}{(t+T)^\frac{k-1}{2}} e^{-(y+y_0)^2/(4 \Deff (t+T))} \sim (t+T)^\frac{3-k+\mu}{2}. 
\end{align}
Hence $\partial_y^k \phiI$ does not satisfy the estimate \eqref{e: residual desired} if $k \leq 3$. We must therefore keep track of all terms involving derivatives up to order 3 of $\phiI$  which are generated upon inserting the leading order asymptotics \eqref{e: colinear formal leading order} into \eqref{e: colinear system}. For instance, according to \eqref{e: colinear phih formal leading order}, we expect that $\phih \sim \partial_y^2 \phiI$, but inserting this into \eqref{e: colinear system} generates  additional terms of the form
\begin{align}
b_{22}(\partial_t, \partial_y) \phih \sim b_{22}^{00} \partial_y^2 \phiI + b_{22}^{01} \partial_y^3 \phiI + \mathrm{O}(\partial_t, \partial_y^2) \partial_y^2 \phiI,
\end{align}
the first two of which we must track explicitly, while the last term is sufficiently decaying to be included in a residual term satisfying an estimate of the form \eqref{e: residual desired}. 

\begin{remark}\label{rmk: parabolic}
	When considering higher-order parabolic systems of the form \eqref{e: parabolic}, the operators $b_{ij}(\partial_t, \partial_y)$ themselves contain terms with three or more space derivatives in them, and we must for instance expand
	\begin{align}
	b_{11}(\partial_t, \partial_y) = \partial_t - b_{11}^{02} \partial_y^2 - b_{11}^{03} \partial_y^3 + \mathrm{O}(\partial_y^4)
	\end{align}
	and keep track of the third derivative term explicitly, since $\langle y \rangle^{2 + \mu} \partial_y^3 \phiI$ is not decaying in time. Keeping track of these terms is the only modification needed to treat higher order parabolic systems. 
\end{remark}

We translate these heuristics into a precise, systematic argument using self-similar variables. First we rewrite \eqref{e: colinear system} slightly. We define $P^\mathrm{I}: \R^n \to \R$ and $P^\mathrm{h}:\R^n \to \R^{n-1}$ by 
\begin{align}
P^\mathrm{I} \begin{pmatrix} f_1 \\ \vdots \\ f_n \end{pmatrix} = f_1, \quad P^\mathrm{h} \begin{pmatrix} f_1, \\ \vdots\\  f_n \end{pmatrix} = \begin{pmatrix}f_2 \\ \vdots \\ f_n \end{pmatrix},
\end{align}
and note that by Lemma \ref{l: normal form colinear} 
\begin{align}
\PI S Q \begin{pmatrix} \phiI \\ \phih \end{pmatrix} = \phiI + s_{12}^T \phih,
\end{align}
and 
\begin{align}
\Ph SQ \begin{pmatrix} \phiI \\ \phih \end{pmatrix} = s_{21} \phiI + s_{22} \phih 
\end{align}
for some $s_{12}, s_{21} \in \R^{n-1}$, and $s_{22} \in \R^{n-1} \times \R^{n-1}$. We then write the system \eqref{e: colinear system} as the coupled equations
\begin{align}
\partial_t \phiI &= \Deff \partial_y^2 \phiI + \frac{3}{2 (t+T)} \phiI + \FI_1 (\phiI, \phih, y, t+T) + \FI_2  (\phiI, \phih, y, t+T) \label{e: colinear phiI eqn}\\
b_{22}^{00} \phih &= \left( - b_{21}^{10} \partial_t - b_{21}^{02} \partial_y^2 - \frac{3}{2 (t+T)} s_{21} \right) \phiI + \Fh_1  (\phiI, \phih, y, t+T) + \Fh_2  (\phiI, \phih, y, t+T), \label{e: colinear phih eqn}
\end{align}
where $\Deff = b_{11}^{02}$, and
\begin{align}
\FI_1  (\phiI, \phih, y, t+T) &= b_{12}^{01} \partial_y \phih - \frac{3}{2 \eta_* (t+T)} \phiI_y, \\
\FI_2  (\phiI, \phih, y, t+T) &= \left( \tilde{b}_{12} (\partial_t, \partial_y)  + \frac{3}{2(t+T)} s_{12}^T - \frac{3}{2 \eta_* (t+T)} s_{12}^T \partial_y \right) \phih + \PI S e^{\eta_* y} N \left( e^{-\eta_* y} Q \Phi \right), \\
\Fh_1  (\phiI, \phih, y, t+T) &= - b_{22}^{01} \partial_y \phih + \frac{3}{2 \eta_* (t+T)} s_{21} \partial_y \phiI,
\end{align}
and
\begin{align}
\Fh_2  (\phiI, \phih, y, t+T) &= \left( - \tilde{b}_{22} (\partial_t, \partial_y) - \frac{3}{2(t+T)} s_{22} + \frac{3}{2 \eta_* (t+T)} s_{22} \partial_y \right) \phih + \Ph S e^{\eta_* y} N \left(e^{-\eta_* y} Q \Phi \right). 
\end{align}
We have written the equations in this form since the first, explicit terms in \eqref{e: colinear phiI eqn}-\eqref{e: colinear phih eqn} represent the leading order parts in the equation, while $\mathcal{F}^\mathrm{I/h}_{j}, j = 1,2$ represent successively higher order corrections. 

We now introduce the scaling variables
\begin{align}
\tau = \log (t+T), \quad \xi = \frac{1}{\sqrt{\Deff}} \frac{y+y_0}{\sqrt{t+T}}
\end{align}
where $y_0 \in \R$ will be chosen later. We define
\begin{align}
\Psi(\xi, \tau) = \begin{pmatrix}
\psiI (\xi, \tau) \\ \psih(\xi, \tau) 
\end{pmatrix}
:= 
\begin{pmatrix}
\phiI(y, t) \\ \phih(y, t) 
\end{pmatrix}
 = \Phi(y,t). 
\end{align}
The new unknowns $\psiI, \psih$ solve the system
\begin{align}
\partial_\tau \psiI &= \partial_\xi^2 \psiI + \frac{1}{2} \xi \partial_\xi \psiI + \frac{3}{2} \psiI + \tfi_1 + \tfi_2 \label{e: colinear psiI eqn} \\
b_{22}^{00} e^\tau \psih &= \left( -b_{21}^{10} (\partial_\tau - \frac{1}{2} \xi \partial_\xi) - b_{21}^{02} \Deff^{-1} \partial_\xi^2 - \frac{3}{2} s_{21} \right) \psiI + \tfh_1 + \tfh_2. \label{e: colinear psih eqn}
\end{align}
The higher order correction terms $\tilde{\mathcal{F}}^\mathrm{I/h}_j$ depend on $(\psiI, \psih, \xi, \tau)$, but we will suppress the dependence on $\xi$ and $\tau$. They are given by 
\begin{align}
\tfi_1 (\psiI, \psih) &= b_{12}^{01} e^{\tau/2} \Deff^{-1/2} \partial_\xi \psih - \frac{3}{2 \eta_*} e^{-\tau/2} \Deff^{-1/2} \partial_\xi \psiI, \label{e: colinear tfi 1 def}
\end{align}
\begin{multline}
\tfi_2 (\psiI, \psih) = \left( e^\tau \tilde{b}_{12} \left(e^{-\tau} (\partial_\tau - \frac{1}{2} \xi \partial_\xi), e^{-\tau/2} \Deff^{-1/2} \partial_\xi \right) + \frac{3}{2} s_{12}^T - \frac{3}{2 \eta_*} e^{-\tau/2} s_{12}^T \Deff^{-1/2} \partial_\xi \right) \psih \\ + e^\tau \tilde{N}^\mathrm{I} (\xi, \tau, \psiI, \psih), 
\end{multline}
\begin{align}
\tfh_1 (\psiI, \psih) &= - b_{22}^{01} e^{\tau/2} \Deff^{-1/2} \partial_\xi \psih + \frac{3}{2 \eta_*} s_{21} e^{-\tau/2} \Deff^{-1/2} \partial_\xi \psiI, \label{e: colinear tfh 1 def}
\end{align}
and
\begin{multline}
\tfh_2 (\psiI, \psih) = \left( - e^\tau \tilde{b}_{22} \left( e^{-\tau} (\partial_\tau - \frac{1}{2} \xi \partial_\xi), e^{-\tau/2} \Deff^{-1/2} \partial_\xi\right) - \frac{3}{2} s_{22} + \frac{3}{2 \eta_*} s_{22} e^{-\tau/2} \Deff^{-1/2} \partial_\xi\right) \psih \\
+ e^\tau \tilde{N}^\mathrm{h} (\xi, \tau, \psiI, \psih),
\end{multline}
where
\begin{align}
\tilde{N}^\mathrm{I/h} (\xi, \tau, \psiI, \psih) = P^\mathrm{I/h} S e^{\eta_* y(\xi, \tau)} N(e^{-\eta_* y(\xi,\tau)} Q \Psi),
\end{align}
are the nonlinear terms, with 
\begin{align}
y(\xi, \tau) = \sqrt{\Deff} e^{\tau/2} \xi - y_0. 
\end{align}
The nonlinear terms are irrelevant in the regime we are interested in for constructing the diffusive tail --- see Lemma \ref{l: colinear nonlinearity estimates}. 

We emphasize that in deriving the system \eqref{e: colinear psiI eqn}-\eqref{e: colinear psih eqn}, we have multiplied both equations by $e^\tau$, so we will need to keep track of this factor when comparing residual errors of approximate solutions in the scaling and original coordinates. We uncover the leading order dynamics of \eqref{e: colinear psiI eqn}-\eqref{e: colinear psih eqn} by inserting the ansatz
\begin{align}
\psiI(\xi, \tau) &= e^{\tau/2} \psiI_0 (\xi) + \psiI_1 (\xi), \label{e: colinear ansatz 1} \\
\psih(\xi, \tau) &= e^{-\tau/2} \psih_0 (\xi) + e^{-\tau}\psih_1 (\xi). \label{e: colinear ansatz 2}
\end{align}
The ansatz for $\psiI$ has precisely the form used for scalar equations in \cite{CAMS, NRRrefined}, while $\psih$ is a new term which captures the higher order but still relevant dynamics resulting from the interactions between diffusive and strongly stable modes in the multi-component case. Note that the form of the ansatz captures the intuition discussed above that $\phih \sim \partial_y^2 \psiI$, and so decays faster by a factor of $(t+T)^{-1} = e^{-\tau}$. 

Inserting this ansatz into \eqref{e: colinear psiI eqn}-\eqref{e: colinear psih eqn}, we separate terms according to their order in powers of $e^{\tau/2}$. The leading order terms are on the order $e^{\tau/2}$, and keeping only these terms results in the system
\begin{align}
0 &= \partial_\xi^2 \psiI_0 + \frac{1}{2} \xi \partial_\xi \psiI_0 + \psiI_0 \label{e: colinear psi0 eqn}\\
b_{22}^{00} \psih_0 &= \left( -\frac{1}{2}b_{21}^{10} (1 - \xi \partial_\xi) - b_{21}^{02} \Deff^{-1} \partial_\xi^2 - \frac{3}{2} s_{21} \right) \psiI_0 \label{e: colinear psih0 eqn}. 
\end{align}
Since the state selected in the wake of the front is exponentially stable, the bulk of the front creates a strong absorption mechanism when viewed from the perspective of the leading edge. Therefore, we consider the equations for $\psiI_j$ and  $\psih_j,$ on the half-line $\xi > 0$ with homogeneous Dirichlet boundary condition $\psiI_j (0) = \psih_j (0) = 0$, as in \cite{CAMS, NRRrefined}. 

We define $L_\Delta = \partial_\xi^2 + \frac{1}{2} \xi \partial_\xi + 1$, as an operator on $\{ \xi > 0 \}$ with homogeneous Dirichlet boundary condition. Note that $L_\Delta$ is invariant under the reflection $\xi \mapsto -\xi$, and so imposing a homogeneous Dirichlet boundary condition is equivalent to restricting to odd functions. The spectrum of $L_\Delta$ is well-known: see for instance \cite[Appendix A]{GallayWayne}, or note that conjugating with weight $e^{\xi^2/8}$ transforms $L_\Delta$ into the quantum harmonic oscillator, with well known spectral properties; see e.g. \cite{Helffer}. In particular, $\lambda = 0$ is an eigenvalue to $L_\Delta$ on $L^2_\mathrm{odd}(\R)$, with eigenfunction 
\begin{align}
\psiI_0 (\xi) = \beta_0 \xi e^{-\xi^2/4} \label{e: colinear psi0 formula}
\end{align}
uniquely solving \eqref{e: colinear psi0 eqn} up to the arbitrary constant $\beta_0 \in \R$ which we leave free for now. Having solved for $\psiI_0$, leaving $\beta_0$ to be chosen later, we may then invert $b_{22}^{00}$ by Lemma \ref{l: normal form colinear} to solve \eqref{e: colinear psih0 eqn} for $\psih_0$, obtaining
\begin{align}
\psih_0 (\xi) = (b_{22}^{00})^{-1} \left( -\frac{1}{2}b_{21}^{10} (1 -  \xi \partial_\xi) - b_{21}^{02} \Deff^{-1} \partial_\xi^2 - \frac{3}{2} s_{21} \right) \psiI_0 (\xi). \label{e: colinear psih 0 formula}
\end{align}
In particular, $\psih_0$ satisfies the following elementary estimates.
\begin{lemma}\label{l: colinear psih 0 estimates}
	Let $\psiI_0$ be defined by \eqref{e: colinear psi0 formula} and $\psih_0$ be defined by \eqref{e: colinear psih 0 formula}. For each natural number $k$, there exists a constant $C_k > 0$ such that 
	\begin{align}
	|\partial_\xi^k \psih_0(\xi) | \leq C_k e^{-\xi^2/5}
	\end{align}
	for all $\xi \in \R$. 
\end{lemma}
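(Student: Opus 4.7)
The plan is to exploit the explicit polynomial-times-Gaussian structure of $\psiI_0$ and show it is preserved under both the differential operator defining $\psih_0$ and under arbitrary $\xi$-differentiation. Recall $\psiI_0(\xi) = \beta_0 \xi e^{-\xi^2/4}$, which has the form $p(\xi) e^{-\xi^2/4}$ for a polynomial $p$. The key observation is that the class $\mathcal{P} := \{ p(\xi) e^{-\xi^2/4} : p \text{ polynomial (with values in } \R^m) \}$ is closed under multiplication by polynomials in $\xi$ and under $\partial_\xi$, since
\begin{equation*}
\partial_\xi \bigl( p(\xi) e^{-\xi^2/4} \bigr) = \bigl( p'(\xi) - \tfrac{1}{2} \xi p(\xi) \bigr) e^{-\xi^2/4}.
\end{equation*}

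The first step is to apply this closure to the formula \eqref{e: colinear psih 0 formula}. The operator acting on $\psiI_0$ there is a sum of terms of the form (constant matrix) $\cdot$ (polynomial in $\xi$) $\cdot \partial_\xi^j$ with $j \leq 2$; applying $(b_{22}^{00})^{-1}$ (well-defined by Lemma \ref{l: normal form colinear}) preserves the polynomial structure componentwise. Hence there exists a polynomial $P_0 : \R \to \R^{n-1}$ with $\psih_0(\xi) = P_0(\xi) e^{-\xi^2/4}$. The second step is an induction on $k$: assuming $\partial_\xi^k \psih_0(\xi) = P_k(\xi) e^{-\xi^2/4}$ for a polynomial $P_k$, the computation above yields $\partial_\xi^{k+1} \psih_0(\xi) = (P_k'(\xi) - \tfrac{1}{2} \xi P_k(\xi)) e^{-\xi^2/4}$, which remains of the same form.

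The final step is to absorb the polynomial into a slightly weaker Gaussian: write $e^{-\xi^2/4} = e^{-\xi^2/20} \cdot e^{-\xi^2/5}$, and observe that $|P_k(\xi)| e^{-\xi^2/20}$ is continuous on $\R$ and tends to $0$ as $|\xi| \to \infty$, hence is bounded by some constant $C_k > 0$. This yields the claimed pointwise estimate $|\partial_\xi^k \psih_0(\xi)| \leq C_k e^{-\xi^2/5}$ uniformly in $\xi \in \R$.

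There is no genuine obstacle here: the lemma is a direct consequence of the explicit Gaussian form of $\psiI_0$ together with the fact that $(b_{22}^{00})^{-1}$ exists. The only reason for stating the bound with $e^{-\xi^2/5}$ rather than $e^{-\xi^2/4}$ is to leave a Gaussian safety margin that will be convenient when these bounds are combined with polynomial prefactors in subsequent residual estimates.
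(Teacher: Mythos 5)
Your argument is correct and matches the reasoning the paper implicitly relies on: the paper states this lemma without proof as an elementary consequence of the explicit polynomial-times-Gaussian form of $\psiI_0$ and $\psih_0$. The observation that the class $p(\xi)e^{-\xi^2/4}$ is closed under $\partial_\xi$ and that the resulting polynomial prefactor can be absorbed by relaxing $e^{-\xi^2/4}$ to $e^{-\xi^2/5}$ is exactly the intended proof.
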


We now collect the next order terms in $e^{\tau/2}$ resulting from inserting the ansatz \eqref{e: colinear ansatz 1}-\eqref{e: colinear ansatz 2} into the system \eqref{e: colinear psiI eqn}-\eqref{e: colinear psih eqn}, which are terms of order 1. We obtain the system
\begin{align}
\left(L_\Delta + \frac{1}{2} \right) \psiI_1 &= - \tfi_1 (e^{\tau/2} \psiI_0, e^{-\tau/2} \psih_0) =: \GI_1 (\xi). \label{e: colinear psiI 1 eqn} \\
b_{22}^{00} \psih_1 &= \left( \frac{1}{2}b_{21}^{10} \xi \partial_\xi - b_{21}^{02} \Deff^{-1} \partial_\xi^2 - \frac{3}{2} s_{21} \right) \psiI_1 + \tfh_1 (e^{\tau/2} \psiI_0, e^{-\tau/2} \psih_0) \label{e: colinear psih 1 eqn}
\end{align}
Note from the form of $\tfi_1$ in \eqref{e: colinear tfi 1 def} that the right hand side of \eqref{e: colinear psiI 1 eqn} is independent of $\tau$, so we denote it by $\GI_1(\xi)$ as above. It follows from Lemma \ref{l: colinear psih 0 estimates} and \eqref{e: colinear psi0 formula} that there exists a constant $C > 0$ such that
\begin{align}
|\GI_1(\xi)| \leq C e^{-\xi^2/6}. 
\end{align}
Note that we widen the variance of the Gaussian to absorb any polynomially growing terms resulting from repeated differentiation. 

We can solve \eqref{e: colinear psiI 1 eqn} provided $\lambda = -\frac{1}{2}$ is not an eigenvalue of $L_\Delta$. According to \cite[Appendix A]{GallayWayne}, $\lambda = -\frac{1}{2}$ is an eigenvalue to $L_\Delta$ on $L^2 (\R)$. However, the corresponding eigenfunction is even, so restricting to the odd subspace ---  or equivalently considering the equations on $\xi > 0$ with homogeneous Dirichlet boundary condition --- eliminates this eigenvalue. We may therefore solve \eqref{e: colinear psiI 1 eqn} for $\psiI_1$, using also that the right hand side is strongly localized so that the essential spectrum of $L_\Delta$ does not interfere. Conjugating with the Gaussian weight $e^{\xi^2/8}$ and exploiting properties of the quantum harmonic oscillator \cite{Helffer}, we further obtain Gaussian estimates on the solution and its derivatives, which we state in the following lemma. 
\begin{lemma}\label{l: colinear psiI 1}
	Let $\psiI_0$ be given by \eqref{e: colinear psi0 formula} and $\psih_0$ be given by \eqref{e: colinear psih 0 formula}. Then there exists a smooth solution $\psiI_1$ to \eqref{e: colinear psiI 1 eqn} such that for each natural number $k$, we have 
	\begin{align}
	|\partial_\xi^k \psiI_1 (\xi) | \leq C_k e^{-\xi^2/8} 
	\end{align}
	for all $\xi > 0$. Moreover, all derivatives of $\psiI_1$ extend continuously to $\xi = 0$. 
\end{lemma}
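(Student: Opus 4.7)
The plan is to solve $(L_\Delta + 1/2)\psiI_1 = \GI_1$ by working on the odd subspace of the weighted Hilbert space $L^2(\R, e^{\xi^2/4}d\xi)$, on which $L_\Delta$ is self-adjoint with compact resolvent and discrete spectrum $\{1/2 - m/2 : m \geq 0\}$, with eigenfunctions whose parity matches that of $m$. I would extend $\GI_1$ oddly to $\R$; the Gaussian bound $|\GI_1(\xi)| \leq C e^{-\xi^2/6}$ guarantees the extension lies in this weighted $L^2$ space. On the odd subspace, $L_\Delta$ keeps only the eigenvalues $\{0, -1, -2, \ldots\}$, so $L_\Delta + 1/2$ has spectrum $\{1/2, -1/2, -3/2, \ldots\}$, avoids zero, and is therefore invertible with a bounded inverse, producing a solution $\psiI_1$ in the operator domain.

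Continuity and smoothness are then essentially automatic. The domain of $L_\Delta + 1/2$ embeds locally into $H^2$, so $\psiI_1$ is continuous on $\R$, and being odd it satisfies $\psiI_1(0) = 0$. On $(0,\infty)$ the source $\GI_1$ is smooth, and writing the ODE as $\psiI_1'' = \GI_1 - (\xi/2)\psiI_1' - (3/2)\psiI_1$ and differentiating iteratively shows $\psiI_1 \in C^\infty(0, \infty)$ with all derivatives extending continuously to $\xi = 0$; at each step the right-hand side is already known to be continuous, so the identity defines the next derivative up to the boundary.

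The Gaussian pointwise bounds are the most delicate step and the place where the real work lies. The natural strategy is to conjugate with $e^{\xi^2/8}$: setting $w = e^{\xi^2/8}\psiI_1$ converts the equation into a Schr\"odinger-type problem $-w'' + (\xi^2/16)w - (5/4)w = -e^{\xi^2/8}\GI_1$ with a confining harmonic potential and a source bounded by $C e^{-\xi^2/24}$. The rescaled quantum harmonic oscillator $-\partial_\xi^2 + \xi^2/16$ on the odd subspace has strictly positive discrete spectrum $\{3/4, 7/4, 11/4, \ldots\}$, so the shift by $5/4$ avoids zero and the inverse is bounded on $L^2$. The super-exponential decay of the source together with an Agmon-type barrier argument (or direct WKB comparison with the dominant decaying solution $e^{-\xi^2/8}$ of the homogeneous equation) promotes this to a pointwise bound $|w(\xi)| \leq C$, yielding $|\psiI_1(\xi)| \leq C e^{-\xi^2/8}$. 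For the derivative estimates, I would differentiate the original equation and use the commutator $[\partial_\xi, L_\Delta] = (1/2)\partial_\xi$ to see that $\partial_\xi^k \psiI_1$ satisfies an equation of the same structure, with right-hand side built from $\partial_\xi^j\GI_1$ and $\partial_\xi^j\psiI_1$ for $j < k$; induction combined with the same conjugation/barrier argument produces $|\partial_\xi^k \psiI_1(\xi)| \leq C_k e^{-\xi^2/8}$. The main obstacle is precisely this last step: converting $L^2$ solvability into sharp pointwise Gaussian bounds on every derivative, where the gap between the source rate $1/6$ and the target rate $1/8$ provides just enough room to absorb polynomial losses from the barrier construction and from repeated differentiation.
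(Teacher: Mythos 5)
Your argument follows exactly the route the paper sketches in the paragraph preceding the lemma and in its citation of \cite[Lemma 2.3]{CAMS}: restrict to the odd subspace (equivalently Dirichlet at $\xi=0$) to remove the eigenvalue $-\tfrac12$, invert $L_\Delta+\tfrac12$ in a Gaussian-weighted $L^2$ space where it has compact resolvent, conjugate with $e^{\xi^2/8}$ to reduce to a shifted quantum harmonic oscillator, and bootstrap for pointwise Gaussian bounds on all derivatives. The details you supply (parity of the eigenfunctions, the conjugated operator $-\partial_\xi^2+\xi^2/16-5/4$, and the commutator $[\partial_\xi,L_\Delta]=\tfrac12\partial_\xi$ for the derivative induction) are correct and consistent with the paper's outline.
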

We note that this is just Lemma 2.3 of \cite{CAMS}, but with the additional $\psih_0$ terms present in \eqref{e: colinear psiI 1 eqn}. With $\psiI_1$ in hand, we may solve for $\psih_1$ by simply inverting the matrix $b_{22}^{00}$. Note from the form \eqref{e: colinear tfh 1 def} of $\tfh_1$ that
\begin{align}
\Gh_1(\xi) := \tfh_1 (e^{\tau/2} \psiI_0(\xi), e^{-\tau/2} \psih_0 (\xi)) 
\end{align}
does not depend on $\tau$, and so the whole right hand side of \eqref{e: colinear psih 1 eqn} is independent of $\tau$, and hence we find a solution $\psih_1(\xi)$ which depends only on $\xi$ and which satisfies the following elementary estimates.
\begin{lemma}\label{l: colinear psih 1}
	Let $\psiI_0, \psih_0,$ and $\psiI_1$ be given by \eqref{e: colinear psi0 formula}, \eqref{e: colinear psih 0 formula}, and Lemma \ref{l: colinear psiI 1}, respectively. Then there exists a smooth solution $\psih_1$ to \eqref{e: colinear psih 1 eqn} such that for each natural number $k$, we have 
	\begin{align}
	| \partial_\xi^k \psih_1 (\xi) | \leq C_k e^{-\xi^2/8}
	\end{align}
	for all $\xi > 0$. Moreover, all derivatives of $\psih_1$ extend continuously to $\xi = 0$. 
\end{lemma}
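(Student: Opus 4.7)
The result is essentially algebraic once the previous lemmas are in hand, so the plan is short. First I would verify that the right-hand side of \eqref{e: colinear psih 1 eqn} genuinely depends only on $\xi$. Inspecting the definition \eqref{e: colinear tfh 1 def} of $\tfh_1$, one sees that each term carries an $e^{\pm\tau/2}$ prefactor exactly compensating the $e^{\mp\tau/2}$ factors in the arguments $(e^{\tau/2}\psiI_0, e^{-\tau/2}\psih_0)$. Hence
\[
\Gh_1(\xi) := \tfh_1(e^{\tau/2}\psiI_0(\xi), e^{-\tau/2}\psih_0(\xi))
\]
is $\tau$-independent, and the same holds for the first group of terms in \eqref{e: colinear psih 1 eqn}, which act only in $\xi$ on $\psiI_1$.

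Next, since $b_{22}^{00}$ is invertible by Lemma \ref{l: normal form colinear}, I simply define
\[
\psih_1(\xi) := (b_{22}^{00})^{-1}\left[\left(\tfrac{1}{2}b_{21}^{10}\xi\partial_\xi - b_{21}^{02}\Deff^{-1}\partial_\xi^2 - \tfrac{3}{2}s_{21}\right)\psiI_1(\xi) + \Gh_1(\xi)\right].
\]
Smoothness of $\psih_1$ on $\{\xi>0\}$ and continuous extension of all derivatives to $\xi = 0$ follow immediately from the corresponding properties of $\psiI_1$ established in Lemma \ref{l: colinear psiI 1} together with the explicit formulas \eqref{e: colinear psi0 formula}, \eqref{e: colinear psih 0 formula}.

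For the pointwise bounds, I would apply $\partial_\xi^k$ to the defining formula and use the triangle inequality. Each term on the right is either a smooth derivative of $\psiI_1$ (bounded by $C_k e^{-\xi^2/8}$ from Lemma \ref{l: colinear psiI 1}), a smooth derivative of $\psih_0$ (bounded by $C_k e^{-\xi^2/5}$ from Lemma \ref{l: colinear psih 0 estimates}), or a derivative of $\psiI_0 = \beta_0 \xi e^{-\xi^2/4}$, for which Leibniz produces polynomial prefactors times $e^{-\xi^2/4}$. Polynomial prefactors may be absorbed by slightly widening the Gaussian variance, so each contribution is bounded by a constant times $e^{-\xi^2/8}$. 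Summing and using boundedness of $(b_{22}^{00})^{-1}$ yields $|\partial_\xi^k \psih_1(\xi)| \leq C_k e^{-\xi^2/8}$ for $\xi > 0$.

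There is no real obstacle here: unlike Lemma \ref{l: colinear psiI 1}, we do not need to invert any differential operator — the diffusive coordinate $\psih_1$ is algebraically slaved to $\psiI_1$ and $\psih_0$ through the invertible matrix $b_{22}^{00}$. The only point that requires a brief check is the $\tau$-independence of the right-hand side, which is what justified writing a $\psih_1$ depending only on $\xi$ in the ansatz \eqref{e: colinear ansatz 2} in the first place.
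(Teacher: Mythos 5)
Your proof is correct and follows exactly the same route as the paper: observe that the $e^{\pm\tau/2}$ prefactors in $\tfh_1$ compensate the $e^{\mp\tau/2}$ factors in the arguments so the right-hand side of \eqref{e: colinear psih 1 eqn} is $\tau$-independent, then invert the constant matrix $b_{22}^{00}$ and combine the Gaussian bounds on $\psiI_1$, $\psih_0$, and $\psiI_0$ (widening the variance to absorb polynomial factors). The paper presents this as an elementary observation rather than spelling out the bound term by term, but the content is identical.
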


Having now chosen $\psi^\mathrm{I/h}_j, j = 0, 1$, we show that this construction produces an approximate solution of \eqref{e: colinear psiI eqn}-\eqref{e: colinear psih eqn} whose residual error will satisfy \eqref{e: residual desired} once we revert to $(y,t)-$coordinates. We define the nonlinear evaluation operator, with $\Psi = (\psiI, \psih)^T$, 
\begin{align}
\mcf_\mathrm{res} [\Psi] = \begin{pmatrix} \partial_\tau \psiI - \partial_\xi^2 \psiI - \frac{1}{2} \xi \partial_\xi \psiI - \frac{3}{2} \psiI - \tfi_1(\psiI, \psih) - \tfi_2 (\psiI, \psih) \\ b_{22}^{00} e^\tau \psih - \left( -b_{21}^{10} (\partial_\tau - \frac{1}{2} \xi \partial_\xi) - b_{21}^{02} \Deff^{-1} \partial_\xi^2 - \frac{3}{2} s_{21} \right) \psiI - \tfh_1(\psiI, \psih) - \tfh_2(\psiI, \psih),
\end{pmatrix}
\end{align}
so that exact solutions to \eqref{e: colinear psiI eqn}-\eqref{e: colinear psih eqn} satisfy $\mcf_\mathrm{res} [\Psi] = 0$. We now estimate the contributions from the nonlinear terms $\tilde{N}^\mathrm{I/h} (\xi, \tau, \psiI, \psih)$. We will only use the diffusive tail in our construction of an approximate solution for $y \geq (t+T)^\mu$, since we will match with the bulk of the front on this length scale. Hence, when estimating the nonlinear contributions here, we may restrict to the corresponding region in $\xi$, given by \eqref{e: xi restriction}. 
\begin{lemma}\label{l: colinear nonlinearity estimates}
	Fix $0 < \mu < \frac{1}{8}$. Assume there is a constant $C_2$ such that $|\psiI|+|\psih| \leq C_1$. Then there exists a constant $C_2$ such that 
	\begin{align}
	e^\tau |\tilde{N}^I (\xi, \tau, \psiI, \psih)| + e^\tau |\tilde{N}^\mathrm{h} (\xi, \tau, \psiI, \psih)| \leq C_2 e^{- 2 \tau} |\psiI + \psi_h|^2 \label{e: scaling variables nonlinearity estimate}
	\end{align}
	for all $\xi$ and $\tau> 0$ such that 
	\begin{align}
	\xi \geq \frac{1}{\sqrt{D_\mathrm{eff}}} \left( e^{(\mu-1/2) \tau} + y_0 e^{-\tau/2} \right). \label{e: xi restriction}
	\end{align}
\end{lemma}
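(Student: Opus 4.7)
The plan is to exploit the strong exponential decay $e^{-\eta_* y}$ in the leading edge, which on the matching scale $y \sim e^{\mu \tau}$ decays doubly-exponentially in the scaling time $\tau$, easily absorbing the prefactor $e^\tau$ on the left-hand side of \eqref{e: scaling variables nonlinearity estimate}. The essential input is that $N(u) = f(u) - f'(0) u$ vanishes quadratically at the origin, so Taylor's theorem yields $|N(w)| \leq C|w|^2$ uniformly for $|w|$ bounded.

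First I would translate the nonlinearity estimate back through the change of variables. I will verify below that $y(\xi,\tau) \geq e^{\mu \tau} \geq 1$ on the region \eqref{e: xi restriction}, so $|e^{-\eta_* y(\xi,\tau)} Q\Psi| \leq \|Q\|(|\psiI|+|\psih|) \leq C_1 \|Q\|$, and hence
\begin{align*}
|N(e^{-\eta_* y(\xi,\tau)} Q \Psi)| \leq C\, e^{-2\eta_* y(\xi,\tau)}\, (|\psiI|+|\psih|)^2.
\end{align*}
Multiplying by $e^{\eta_* y(\xi,\tau)}$ and using boundedness of the fixed matrices $S$, $\PI$, $\Ph$, this yields
\begin{align*}
e^\tau |\tilde{N}^{\mathrm{I/h}}(\xi,\tau,\psiI,\psih)| \leq C\, e^\tau e^{-\eta_* y(\xi,\tau)}\, (|\psiI|+|\psih|)^2.
\end{align*}

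Next I would use the lower bound \eqref{e: xi restriction} on $\xi$ to control $y(\xi,\tau) = \sqrt{\Deff}\, e^{\tau/2} \xi - y_0$. Direct substitution gives an exact cancellation of the $y_0$ contributions:
\begin{align*}
y(\xi,\tau) \geq \sqrt{\Deff}\, e^{\tau/2} \cdot \frac{1}{\sqrt{\Deff}}\bigl(e^{(\mu-1/2)\tau} + y_0 e^{-\tau/2}\bigr) - y_0 = e^{\mu \tau}.
\end{align*}
Consequently $e^\tau e^{-\eta_* y(\xi,\tau)} \leq e^{\tau}\, e^{-\eta_* e^{\mu \tau}}$, and it suffices to show this is bounded by $C\, e^{-2\tau}$ uniformly for $\tau \geq 0$. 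That reduces to the claim that $h(\tau) := \eta_* e^{\mu \tau} - 3\tau$ is bounded below on $[0,\infty)$, which is immediate since $h$ is continuous and $h(\tau) \to +\infty$ as $\tau \to \infty$ (using $\mu, \eta_* > 0$). Combining these estimates produces \eqref{e: scaling variables nonlinearity estimate}.

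I do not anticipate a serious obstacle here: the cutoff in \eqref{e: xi restriction} was engineered precisely so that the $y_0$ cancellation is clean and $y \geq e^{\mu \tau}$, on which the leading-edge weight $e^{-\eta_* y}$ dominates every polynomial or exponential factor in $\tau$. The mild notational point that $(|\psiI|+|\psih|)^2$ coincides with the $|\psiI + \psih|^2$ of the statement up to a dimension-dependent constant is absorbed into $C_2$.
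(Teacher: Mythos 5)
Your proposal follows essentially the same route as the paper: quadratic Taylor bound on $N$, the lower bound $y(\xi,\tau) \geq e^{\mu\tau}$ forced by the cutoff \eqref{e: xi restriction}, and absorption of the $e^\tau$ (indeed $e^{3\tau}$) factor into the super-exponential decay $e^{-\eta_* e^{\mu\tau}}$. The only difference is that you spell out explicitly a couple of steps the paper compresses (the exact algebraic cancellation of $y_0$, and the boundedness of $\eta_* e^{\mu\tau} - 3\tau$ from below), so the argument is correct and matches the paper's.
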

\begin{proof}
	Note that $N$ is quadratic in its argument, so we have 
	\begin{align*}
	| e^{\eta_* y(\xi, \tau)} N (e^{-\eta_* y(\xi, \tau)} Q(\psiI,\psih)^T) | \leq C e^{-\eta_* y(\xi, \tau)} |Q (\psiI, \psih)^T|^2 \leq C e^{- \eta_* y (\xi, \tau)} |\psi+ \psih|^2. 
	\end{align*}
	The condition \eqref{e: xi restriction} implies that $y(\xi, \tau) \geq e^{\mu \tau}$, and so we have 
	\begin{align*}
	| e^{\eta_* y(\xi, \tau)} N (e^{-\eta_* y(\xi, \tau)} Q(\psiI,\psih)^T) | \leq C \exp \left[ - \eta_* e^{\mu \tau} \right] |\psiI + \psih|^2. 
	\end{align*} 
	Since this decay is super-exponential in $\tau$, the extra factor of $e^\tau$ in \eqref{e: scaling variables nonlinearity estimate} may be easily absorbed, leading to the desired estimate. 
\end{proof}

We are now ready to estimate the full residual error from constructing the diffusive tail according to the ansatz \eqref{e: colinear ansatz 1}-\eqref{e: colinear ansatz 2}. 
\begin{prop}[Diffusive tail in self-similar coordinates --- colinear case]
	Fix $0 < \mu < \frac{1}{8}$. Let $\psiI_0, \psih_0, \psiI_1,$ and $\psih_1$ be defined as in \eqref{e: colinear psi0 formula}, \eqref{e: colinear psih 0 formula}, Lemma \ref{l: colinear psiI 1}, and Lemma \ref{l: colinear psih 1}, respectively, and let 
	\begin{align}
	\Psi(\xi, \tau) = \begin{pmatrix}
	\psiI (\xi, \tau) \\ \psih(\xi, \tau)
	\end{pmatrix} = \begin{pmatrix}
	e^{\tau/2} \psiI_0 (\xi) + \psiI_1 (\xi) \\
	e^{-\tau/2} \psih_0(\xi) + e^{-\tau} \psih_1(\xi)
	\end{pmatrix}. 
	\end{align}
	Then there exists a constant $C > 0$ such that 
	\begin{align}
	| \mcf_\mathrm{res}[\Psi] (\xi, \tau)| \leq C e^{-\tau/2} e^{-\xi^2/8} \label{e: colinear Psi residual estimate}
	\end{align}
	for all $\tau > 0$ and all
	\begin{align}
	\xi \geq \frac{1}{\sqrt{\Deff}} \left( e^{(\mu-1/2) \tau} + y_0 e^{-\tau/2} \right). \label{e: tail xi restriction}
	\end{align}
\end{prop}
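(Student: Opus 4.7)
The plan is to substitute the ansatz into $\mcf_\mathrm{res}[\Psi]$, exploit the equations by which $\psi^{\mathrm{I/h}}_0$ and $\psi^{\mathrm{I/h}}_1$ were defined to cancel the dominant $e^{\tau/2}$- and $O(1)$-order contributions, and then estimate what remains using the Gaussian bounds of Lemmas \ref{l: colinear psih 0 estimates}, \ref{l: colinear psiI 1}, \ref{l: colinear psih 1} together with Lemma \ref{l: colinear nonlinearity estimates} for the nonlinearity.

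First I treat the $\psiI$-component. Writing $L_\Delta = \partial_\xi^2 + \tfrac{1}{2}\xi\partial_\xi + 1$, the first row of $\mcf_\mathrm{res}[\Psi]$ equals $\partial_\tau\psiI - (L_\Delta + \tfrac{1}{2})\psiI - \tfi_1(\psiI,\psih) - \tfi_2(\psiI,\psih)$. Substituting \eqref{e: colinear ansatz 1}, the time derivative produces $\tfrac{1}{2}e^{\tau/2}\psiI_0$, which by \eqref{e: colinear psi0 eqn} exactly cancels $e^{\tau/2}(L_\Delta+\tfrac{1}{2})\psiI_0$. Equation \eqref{e: colinear psiI 1 eqn} converts $(L_\Delta+\tfrac{1}{2})\psiI_1$ into $\GI_1 = -\tfi_1(e^{\tau/2}\psiI_0, e^{-\tau/2}\psih_0)$. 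Splitting $\tfi_1(\psiI,\psih)$ by linearity in each argument (at fixed $\tau$), the $\GI_1$ term cancels against the corresponding piece of $\tfi_1$, leaving precisely $-\tfi_1(\psiI_1, e^{-\tau}\psih_1) - \tfi_2(\psiI,\psih)$. An analogous reduction using \eqref{e: colinear psih0 eqn} and \eqref{e: colinear psih 1 eqn} handles the second component, leaving an $O(e^{-\tau/2})$ linear remainder plus $\tfh_2(\psiI,\psih)$.

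The linear remainders are manifestly $O(e^{-\tau/2})$ from \eqref{e: colinear tfi 1 def}, \eqref{e: colinear tfh 1 def}, and carry Gaussian decay $e^{-\xi^2/8}$ by Lemmas \ref{l: colinear psiI 1}, \ref{l: colinear psih 1}. For $\tfi_2$ and $\tfh_2$, I use that each monomial of the polynomials $\tilde{b}_{12}, \tilde{b}_{22}$ carries at least one factor of $\partial_t$ or $\partial_x^2$, which under the self-similar rescaling $\partial_t\mapsto e^{-\tau}(\partial_\tau-\tfrac{1}{2}\xi\partial_\xi)$, $\partial_x\mapsto e^{-\tau/2}\Deff^{-1/2}\partial_\xi$ produces a factor of at least $e^{-\tau}$, cancelling the overall $e^\tau$ prefactor in the definitions of $\tfi_2, \tfh_2$; combined with the $O(e^{-\tau/2})$ scale of the ansatz for $\psih$, all such terms are at worst $O(e^{-\tau/2})$, with Gaussian decay preserved by widening the variance to absorb the polynomial prefactors generated by repeated differentiation. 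The lower-order $s_{12}^T$ and $s_{22}$ terms act on $\psih$ and are automatically $O(e^{-\tau/2})$ for the same reason. The nonlinear pieces $e^\tau\tilde{N}^{\mathrm{I/h}}$ decay super-exponentially in $\tau$ on the region \eqref{e: tail xi restriction} by Lemma \ref{l: colinear nonlinearity estimates}, and so are easily absorbed into the stated bound.

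The principal obstacle is this last bookkeeping step: one must verify term by term that every piece of $\tfi_2$ and $\tfh_2$, after the self-similar rescaling and the cancellation against the $e^\tau$ prefactor, decays at least like $e^{-\tau/2}$, and that every Gaussian factor arising from differentiating the ansatz components can be uniformly widened to $e^{-\xi^2/8}$ (the weakest variance, set by $\psiI_1$ and $\psih_1$) to absorb polynomial prefactors. Once this is organized, summing the contributions from Steps 1--2 and the estimates of Steps 3--4 yields the uniform pointwise bound $|\mcf_\mathrm{res}[\Psi](\xi,\tau)|\le Ce^{-\tau/2}e^{-\xi^2/8}$ throughout the region \eqref{e: tail xi restriction}.
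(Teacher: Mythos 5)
Your proposal is correct and follows essentially the same route as the paper: substitute the ansatz, use the defining equations \eqref{e: colinear psi0 eqn}, \eqref{e: colinear psih0 eqn}, \eqref{e: colinear psiI 1 eqn}, \eqref{e: colinear psih 1 eqn} to cancel the $e^{\tau/2}$- and $O(1)$-order contributions, estimate the leftover linear terms as $O(e^{-\tau/2})$ with Gaussian decay by widening the variance, and absorb the nonlinear terms via Lemma \ref{l: colinear nonlinearity estimates}. The paper states this more tersely; you have spelled out the cancellation bookkeeping in more detail, but the argument is the same.
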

\begin{proof}
	By Lemma \ref{l: colinear nonlinearity estimates}, the formula \eqref{e: colinear psi0 formula} for $\psiI_0$, and the estimates on $\psiI_1, \psih_0, \psih_1$ from Lemmas \ref{l: colinear psih 0 estimates} through \ref{l: colinear psih 1}, we have 
	\begin{align*}
		e^\tau |\tilde{N}^I (\xi, \tau, \psiI, \psih)| + e^\tau |\tilde{N}^\mathrm{h} (\xi, \tau, \psiI, \psih)| \leq C e^{-\tau} e^{-\xi^2/8}
	\end{align*} 
	for some constant $C > 0$. The asymptotic expansions leading to the choice of $\psi_j$ and $\psih_j$ carried out above eliminates all terms of order $e^{\tau/2}$ and order $1$ from $\mcf_\mathrm{res}[\Psi]$. The only remaining terms are then bounded by $C e^{-\tau/2}( p(\xi) e^{-\xi^2/4} + e^{-\xi^2/8})$, where $p(\xi)$ is a polynomial resulting from differentiating $\psi_0 (\xi) = \beta_0 \xi e^{-\xi^2/8}$. Since there is a constant $C > 0$ such that $|p(\xi) e^{-\xi^2 /4}|\leq C e^{-\xi^2/8}$, the result follows. 
\end{proof}

We now translate this approximate solution and estimate on the residual error back into $(x,t)$ coordinates. 

\begin{corollary}[Diffusive tail --- normal form coordinates] \label{c: colinear normal form residual}
	Fix $0 < \mu < \frac{1}{8}$ and $\beta_0, y_0 \in \R$ and define 
	\begin{align}
	\phiIp(y,t) &= (t+T)^{1/2} \psiI_0 \left( \frac{1}{\sqrt{\Deff}} \frac{y+y_0}{\sqrt{t+T)}} \right) + \psiI_1 \left( \frac{1}{\sqrt{\Deff}} \frac{y+y_0}{\sqrt{t+T}} \right), \\
	\phihp(y,t) &= (t+T)^{-1/2} \psih_0  \left( \frac{1}{\sqrt{\Deff}} \frac{y+y_0}{\sqrt{t+T}} \right) + (t+T)^{-1} \psih_1  \left( \frac{1}{\sqrt{\Deff}} \frac{y+y_0}{\sqrt{t+T}} \right)
	\end{align}
	Then with $\Phi^+ = (\phiIp, \phihp)$ and $\tilde{F}^+_\mathrm{res} [\Phi]$ defined as in \eqref{e: colinear Phi eqn}, there exists a constant $C > 0$ such that for all $t > 0$ and all $y \geq (t+T)^\mu$, we have 
	\begin{align}
	| \tilde{F}^+_\mathrm{res} [\Phi^+] (y,t) | \leq \frac{C}{(t+T)^{3/2}} e^{-(y+y_0)^2/[8 \Deff (t+T)]}. 
	\end{align}
\end{corollary}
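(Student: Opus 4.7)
The strategy is to translate the residual estimate \eqref{e: colinear Psi residual estimate} from the self-similar coordinates $(\xi, \tau)$ back into the original $(y,t)$ coordinates. By construction, the functions $\phiIp$ and $\phihp$ are precisely the evaluation of the self-similar approximate solution $\Psi = (e^{\tau/2}\psi^\mathrm{I}_0 + \psi^\mathrm{I}_1,\, e^{-\tau/2}\psi^\mathrm{h}_0 + e^{-\tau}\psi^\mathrm{h}_1)$ at $\tau = \log(t+T)$ and $\xi = (y+y_0)/\sqrt{\Deff(t+T)}$, so that $\Phi^+(y,t) = \Psi(\xi(y,t),\tau(t))$.

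The central computation is the identity
\begin{align*}
\mcf_\mathrm{res}[\Psi](\xi, \tau) = e^\tau\, \tilde{F}_\mathrm{res}^+[\Phi^+](y,t),
\end{align*}
which is essentially built in to the derivation of the scaling system \eqref{e: colinear psiI eqn}--\eqref{e: colinear psih eqn} from \eqref{e: colinear Phi eqn}. Applying the chain rule $\partial_t = e^{-\tau}(\partial_\tau - \tfrac{1}{2}\xi\partial_\xi)$ and $\partial_y = \Deff^{-1/2}e^{-\tau/2}\partial_\xi$ rewrites each term of the $(y,t)$-residual as $e^{-\tau}$ times the corresponding term of the $(\xi,\tau)$-residual; the factor of $e^\tau$ then appears precisely because both scaling equations were multiplied through by $e^\tau$. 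I would verify this systematically on the three groups of terms in $\tilde{F}_\mathrm{res}^+$: the linear principal part $-B(\partial_t,\partial_y)\Phi^+$, the time-decaying corrections involving $(t+T)^{-1}$, and the nonlinear piece $S e^{\eta_* y}N(e^{-\eta_* y}Q\Phi^+)$. The last follows directly from the definition of $\tilde{N}^{\mathrm{I/h}}$, since $y(\xi,\tau)$ and the argument $Q\Psi$ are unchanged as functions under the change of variables.

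Granted this identity, the proposition yields
\begin{align*}
|\tilde{F}_\mathrm{res}^+[\Phi^+](y,t)| = e^{-\tau}\bigl|\mcf_\mathrm{res}[\Psi](\xi,\tau)\bigr| \leq C e^{-3\tau/2}\, e^{-\xi^2/8},
\end{align*}
and substituting $e^{-3\tau/2} = (t+T)^{-3/2}$ and $\xi^2 = (y+y_0)^2/[\Deff(t+T)]$ gives the bound claimed in the corollary. To invoke the proposition I also need \eqref{e: tail xi restriction} to hold on the region $y \geq (t+T)^\mu$: writing $y \geq e^{\mu\tau}$ and dividing by $\sqrt{\Deff(t+T)} = \Deff^{1/2}e^{\tau/2}$ produces $\xi \geq \Deff^{-1/2}\bigl(e^{(\mu - 1/2)\tau} + y_0 e^{-\tau/2}\bigr)$, which matches \eqref{e: tail xi restriction} exactly. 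There is no substantive obstacle — the content of the corollary lies entirely in carefully tracking the factor of $e^\tau$ introduced in deriving the scaling system, which is what converts the $e^{-\tau/2}$ decay in self-similar variables into the claimed $(t+T)^{-3/2}$ algebraic decay in the original frame.
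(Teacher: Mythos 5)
Your argument is correct and is essentially the paper's own (the paper devotes a single sentence to this, observing that one regains a factor of $(t+T)^{-1}$ when undoing the multiplication by $e^\tau$ used in deriving the self-similar system). The only added value in your write-up is the explicit verification that the region $\{y \geq (t+T)^\mu\}$ maps to the region \eqref{e: tail xi restriction} and that $e^{-\xi^2/8}$ becomes the stated Gaussian in $(y,t)$, both of which check out.
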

\begin{proof}
	Note that in transforming from the $\Phi$ equation to the ultimate system \eqref{e: colinear psiI eqn}-\eqref{e: colinear psih eqn} for $\Psi$, we multiplied by a factor of $e^\tau$. Hence in undoing this transformation we regain a factor of $e^{-\tau} = (t+T)^{-1}$ compared to the estimate \eqref{e: colinear Psi residual estimate}. 
\end{proof}

We now finally undo the normal form transformation $\Phi = Q^{-1} v$ to obtain estimates in the original coordinates. 
\begin{corollary}\label{c: colinear vplus residual}
	Fix $0 < \mu < \frac{1}{8}$ and $\beta_0, y_0 \in \R$ and define $v^+(y,t) = Q \Phi^+(y,t)$, with $\Phi^+$ chosen as in Corollary \ref{c: colinear normal form residual}. There exists a constant $C > 0$ such that for $t > 0$ and $y \geq (t+T)^{\mu}$, we have 
	\begin{align}
	|F^+_\mathrm{res}[v^+] (y,t)| \leq \frac{C}{(t+T)^{3/2}} e^{-(y+y_0)^2/[8\Deff (t+T)]}.
	\end{align}
\end{corollary}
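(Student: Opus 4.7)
The plan is to exploit the fact that $F^+_\mathrm{res}$ and $\tilde{F}^+_\mathrm{res}$ differ only by multiplication by the constant invertible matrix $S$. Recall that equation \eqref{e: colinear Phi eqn} was obtained precisely by setting $v = Q\Phi$ in the definition \eqref{e: v eqn pencil form} of $F^+_\mathrm{res}[v]$ and then applying $S$ to both sides. More concretely, a direct comparison of \eqref{e: v eqn pencil form} and \eqref{e: colinear Phi eqn}, using $B(\partial_t,\partial_y) = S A(\partial_t,\partial_y) Q$ and the fact that the lower-order terms on the right of \eqref{e: colinear Phi eqn} were obtained by applying $S$ to the corresponding terms in \eqref{e: v eqn pencil form} after substituting $v = Q\Phi$, yields the identity
\begin{equation*}
\tilde{F}^+_\mathrm{res}[\Phi] \;=\; S\, F^+_\mathrm{res}[Q\Phi]
\end{equation*}
for any smooth $\Phi$.

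Applying this with $\Phi = \Phi^+$ and $v^+ = Q\Phi^+$ gives
\begin{equation*}
F^+_\mathrm{res}[v^+] \;=\; S^{-1} \tilde{F}^+_\mathrm{res}[\Phi^+].
\end{equation*}
Since $S$ is a fixed invertible matrix, the operator norm $\|S^{-1}\|$ is a finite constant depending only on the normal form, independent of $(y,t)$. Taking pointwise norms and invoking Corollary \ref{c: colinear normal form residual} then immediately produces
\begin{equation*}
|F^+_\mathrm{res}[v^+](y,t)| \;\leq\; \|S^{-1}\|\, |\tilde{F}^+_\mathrm{res}[\Phi^+](y,t)| \;\leq\; \frac{C}{(t+T)^{3/2}} e^{-(y+y_0)^2/[8 \Deff (t+T)]}
\end{equation*}
on the region $y \geq (t+T)^\mu$, which is precisely the claimed bound (with a new constant absorbing $\|S^{-1}\|$).

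There is essentially no obstacle here: the entire content of the corollary is bookkeeping the linearity of the conjugation by $S$ and $Q$. The only point that requires mild care is verifying that the nonlinear term $-e^{\eta_* y} N(e^{-\eta_* y} v)$ in \eqref{e: v eqn pencil form} transforms correctly under $v = Q\Phi$ to the corresponding term $-Se^{\eta_* y}N(e^{-\eta_* y}Q\Phi)$ in \eqref{e: colinear Phi eqn}, but this is immediate from how \eqref{e: colinear Phi eqn} was derived. The Gaussian decay profile and the $(t+T)^{-3/2}$ temporal rate are therefore inherited verbatim from Corollary \ref{c: colinear normal form residual}.
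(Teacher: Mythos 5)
Your proof is correct and is essentially identical to the paper's: the paper's one-line argument is precisely the identity $F^+_\mathrm{res}[v] = S^{-1}\tilde{F}^+_\mathrm{res}[\Phi]$ (from $v = Q\Phi$ and the construction of \eqref{e: colinear Phi eqn} by applying $S$ to \eqref{e: v eqn pencil form}), followed by absorbing $\|S^{-1}\|$ into the constant from Corollary \ref{c: colinear normal form residual}.
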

\begin{proof}
	Note that $F_\mathrm{res}^+ [v] = S^{-1} \tilde{F}_\mathrm{res}^+ [\Phi]$. The result then readily follows from Corollary \ref{c: colinear normal form residual}. 
\end{proof}

\subsection{Matching the diffusive tail to the front}\label{s: colinear matching}
With a detailed description of the diffusive dynamics in the leading edge in hand, we now match this diffusive tail to the invasion front in the wake, obtaining an approximate solution which will govern the propagation dynamics. The argument of this section refines that of \cite[Section 2.3]{CAMS}, following that argument but with two main differences. The first is that we simplify the proof by omitting the additional shift $\zeta(t+T)$ used in \cite{CAMS}, which turns out to be unnecessary for closing the argument. The other difference is that we correct a small gap in the proof of \cite[Proposition 2.5]{CAMS}, where equation (2.39) treats the evaluation operator $F_\mathrm{res} [v]$ as though it were linear. To be completely accurate, $F_\mathrm{res}[v]$ is nonlinear but the effect of the nonlinearity is exponentially small in $(t+T)$ in the relevant regime $(t+T)^\mu \leq y \leq (t+T)^\mu + 1$, so that the same result still holds. A corrected argument is given in the proof of Proposition \ref{p: colinear residual} below. 

For matching with the diffusive tail in the leading edge, we define 
\begin{align}
v^-(y) = \omega(y) q_*(y). 
\end{align}

\begin{lemma}[Pointwise matching]\label{l: colinear pointwise matching}
	Fix $0 < \mu < \frac{1}{8}$, and let $\beta_0 = \sqrt{\Deff}$ and $y_0 = 1+a$. Then there exists a constant $C >0$ such that for all $T$ sufficiently large and $t>0$, we have
	\begin{align}
	|v^- ((t+T)^\mu) - v^+ ((t+T)^\mu, t) | \leq \frac{C}{(t+T)^{\mu-1/2}}. \label{e: colinear matching estimate}
	\end{align}
	Moreover, the derivatives satisfy, for $T$ sufficiently large and $t > 0$,
	\begin{align}
	| v^-_y((t+T)^\mu) - v^+_y((t+T)^\mu, t) | \leq \frac{C}{(t+T)^{\mu-1/2}}, \label{e: colinear matching derivative estimate}
	\end{align}
	and for each integer $k \geq 2$, there is a constant $C_k > 0$ such that
	\begin{align}
	|\partial_y^k v^+ (y, t) 1_{\{(t+T)^\mu \leq y \leq (t+T)^\mu +1\}}| &\leq \frac{C_k}{(t+T)^{\mu- 1/2}}, \label{e:colinear higher derivative estimate 1}\\
	|\partial_y^k v^- (y)  1_{\{(t+T)^\mu \leq y \leq (t+T)^\mu +1\}}| &\leq \frac{C_k}{(t+T)^{\mu-1/2}}. \label{e: colinear higher derivative estimate 2}
	\end{align}
\end{lemma}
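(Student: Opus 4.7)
The plan is a direct matched expansion at the junction point $y=(t+T)^\mu$, comparing term by term. Since $(t+T)^\mu \geq 1$ for $T$ large, the front asymptotics from Hypothesis \ref{hyp: front existence} with $b=1$ and co-linearity $u_1=u_0$ give
\begin{align*}
v^-(y) = e^{\eta_* y} q_*(y) = u_0(y + 1 + a) + \mathrm{O}(e^{-\eta y}) = u_0(y + y_0) + \mathrm{O}(e^{-\eta y}),
\end{align*}
which fixes $y_0=1+a$. Since $Q e_0 = u_0$, I would write $Q = [u_0 \, | \, Q_h]$ and decompose $v^+ = u_0 \phi^{I,+} + Q_h \phi^{h,+}$. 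At the matching point the self-similar variable $\xi = (y+y_0)/\sqrt{\Deff (t+T)}$ is of order $(t+T)^{\mu-1/2} \to 0$, so the strategy is a Taylor expansion of each ingredient of $\Phi^+$ about $\xi = 0$.

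For the dominant piece, using $\beta_0 = \sqrt{\Deff}$,
\begin{align*}
(t+T)^{1/2} \psi^I_0(\xi) = (t+T)^{1/2} \sqrt{\Deff}\,\xi\, e^{-\xi^2/4} = (y+y_0)\bigl(1 + \mathrm{O}((t+T)^{2\mu-1})\bigr),
\end{align*}
which produces exactly the leading contribution $u_0(y+y_0)$ and matches $v^-$ at this order; the choice $\beta_0 = \sqrt{\Deff}$ is precisely what makes this work. The correction $\psi^I_1(\xi)$ vanishes at $\xi = 0$ by the Dirichlet boundary condition (equivalently, odd reflection through the origin), so $\psi^I_1(\xi) = \mathrm{O}(\xi) = \mathrm{O}((t+T)^{\mu-1/2})$. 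The new multi-component ingredient is $\phi^{h,+}$: I would check directly from \eqref{e: colinear psih 0 formula} that $\psi^h_0(0) = 0$, using $\psi^I_0(0) = \partial_\xi^2 \psi^I_0(0) = 0$ and $(1-\xi\partial_\xi)\psi^I_0\big|_{\xi=0} = 0$. Hence $\psi^h_0(\xi) = \mathrm{O}(\xi)$, giving $(t+T)^{-1/2}\psi^h_0 = \mathrm{O}((t+T)^{\mu-1})$, while the bounded $\psi^h_1$ (Lemma \ref{l: colinear psih 1}) contributes $\mathrm{O}((t+T)^{-1})$. Collecting,
\begin{align*}
v^+((t+T)^\mu, t) = u_0((t+T)^\mu + y_0) + \mathrm{O}((t+T)^{\mu-1/2}),
\end{align*}
and subtracting $v^-$ yields \eqref{e: colinear matching estimate}.

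The derivative estimates follow by the same expansion: the chain rule $\partial_y = \Deff^{-1/2}(t+T)^{-1/2} \partial_\xi$ costs one factor of $(t+T)^{-1/2}$ per spatial derivative, so $\partial_y[(t+T)^{1/2}\psi^I_0] = \Deff^{-1/2}\partial_\xi \psi^I_0(\xi) = 1 + \mathrm{O}(\xi^2)$, which (again thanks to $\beta_0 = \sqrt{\Deff}$) matches $\partial_y v^- = u_0 + \mathrm{O}(e^{-\eta y})$ to leading order, with remainders of size $\mathrm{O}((t+T)^{-1/2})$, yielding \eqref{e: colinear matching derivative estimate}. For $k \geq 2$, each further derivative of $v^+$ accumulates another power of $(t+T)^{-1/2}$ acting on the Gaussian bounds of Lemmas \ref{l: colinear psih 0 estimates}--\ref{l: colinear psih 1}, while derivatives of $v^-$ fall on the exponentially small remainder $\mathrm{O}(e^{-\eta y})$, giving \eqref{e:colinear higher derivative estimate 1}--\eqref{e: colinear higher derivative estimate 2}. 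The only real obstacle is the bookkeeping ensuring the exact cancellation of the growing leading terms $u_0(y+y_0)$; this is precisely why the constants $\beta_0 = \sqrt{\Deff}$ and $y_0 = 1+a$ appear in the hypothesis of the lemma, and verifying $\psi^h_0(0) = 0$ through the explicit oddness of $\psi^I_0$ at the origin is the new structural ingredient compared to the scalar case of \cite{CAMS}.
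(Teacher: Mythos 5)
Your proposal is correct and follows the same overall strategy as the paper's proof: decompose $v^+$ via $Qe_0 = u_0$, Taylor-expand $\phiIp$ at the matching point using $\psiI_1(0)=0$ and $\xi\to 0$, and show that $\beta_0 = \sqrt{\Deff}$, $y_0 = 1+a$ cancel the growing leading term against the front asymptotics. The one place you go a bit further than the paper is in treating the $\phihp$ contribution: you observe that $\psih_0$ is odd (hence $\psih_0(0)=0$), which gives a sharper bound $\mathrm{O}((t+T)^{\mu-1})$, whereas the paper simply uses boundedness of $\psih_0$ to get $\mathrm{O}((t+T)^{-1/2})$; both are subdominant to the $\mathrm{O}((t+T)^{\mu-1/2})$ error from $\psiI_1$ and so either suffices.
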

\begin{proof}
	Recall from Corollary \ref{c: colinear vplus residual} that $v^+(y,t) = Q \Phi^+(y,t)$. We split $v^+$ as
	\begin{align*}
	v^+(y,t) =  Q \begin{pmatrix} \phiIp(y,t) \\ 0 \end{pmatrix} + Q \begin{pmatrix} 0 \\ \phihp(y,t) \end{pmatrix}. 
	\end{align*}
	Recall from Lemma \ref{l: normal form colinear} that $Q e_0 = u_0$, where $e_0 = (1, 0, ..., 0)^T$, and so we have 
	\begin{align*}
	Q \begin{pmatrix} \phiIp(y,t) \\ 0 \end{pmatrix} = u_0 \phi^+(y,t). 
	\end{align*}
	Using Corollary \ref{c: colinear normal form residual} and the expression \eqref{e: colinear psi0 formula} to evaluate $\phiIp(y,t)$ at $y = (t+T)^\mu$, we obtain 
	\begin{multline*}
	u_0 \phiIp ((t+T)^\mu, t) =  u_0 \frac{\beta_0}{\sqrt{\Deff}} ((t+T)^\mu + y_0)\exp \left[-\frac{1}{4 \Deff} (t+T)^{-1} \left( (t+T)^{2\mu} + 2 y_0 (t+T)^{\mu} + y_0^2  \right) \right] \\ + u_0 \psiI_1 \left( \frac{1}{\sqrt{\Deff}} \frac{(t+T)^\mu + y_0}{\sqrt{t+T}} \right). 
	\end{multline*}
	Taylor expanding the exponential and $\psiI_1$, using the fact that $\psiI_1(0) = 0$, we obtain 
	\begin{align*}
	u_0 \phiIp ((t+T)^\mu, t) &= u_0 \frac{\beta_0}{\sqrt{\Deff}} ((t+T)^\mu + y_0) \left( 1 + \mathrm{O}((t+T)^{2\mu-1}) \right) + \mathrm{O} ((t+T)^{\mu-1/2}) \\
	&= u_0 \frac{\beta_0}{\sqrt{\Deff}} [ (t+T)^\mu + y_0 ] + \mathrm{O}((t+T)^{\mu-1/2}),
	\end{align*}
	using also that $3 \mu -1 < \mu - 1/2$ since $\mu < 1/8$. This leading term is the term we will use to match with $v^-$. Using the front asymptotics \eqref{e: front asymptotics}, we have
	\begin{align}
	v^-((t+T)^\mu) = u_0 [ (t+T)^\mu + (1 + a) u_0] + \mathrm{O}\left(e^{-\eta_0 (t+T)^\mu} \right), \label{e: colinear v minus expansion}
	\end{align}
	recalling that we are assuming $u_0 = u_1$ in this section. Therefore, choosing $\beta_0 = \sqrt{\Deff}$ and $y_0 = 1+a$, we have 
	\begin{align}
	u_0 \phiIp ((t+T)^\mu, t) - v^-((t+T)^\mu) = \mathrm{O} ((t+T)^{\mu-1/2}). \label{e: colinear leading edge matching 1}
	\end{align}
	It follows from Corollary \ref{c: colinear normal form residual} and Lemma \ref{l: colinear psih 0 estimates} that 
	\begin{align}
	\left| \partial_y^k Q \begin{pmatrix} 0 \\ \phihp ((t+T)^\mu, t) \end{pmatrix}  \right| \leq C
	(t+T)^{-1/2}, \label{e: colinear leading edge matching 2}
	\end{align}
	for $k = 0, 1,$ or 2, and together \eqref{e: colinear leading edge matching 1} and \eqref{e: colinear leading edge matching 2} for $k = 0$ imply \eqref{e: colinear matching estimate}. 
	
	Having chosen $\beta_0$ and $y_0$, we now evaluate the derivatives of $v^\pm$ at $y = (t+T)^\mu$ and see that 
	\begin{align*}
	u_0 \phi^+_y((t+T)^\mu, t) = 1 + \mathrm{O}((t+T)^{\mu-1/2}), \quad v^-((t+T)^\mu) = 1 + \mathrm{O}(e^{-\eta_* (t+T)^\mu}).
	\end{align*}
	Together with the estimate \eqref{e: colinear leading edge matching 2} for $k= 1$, this implies \eqref{e: colinear matching derivative estimate}. The estimates for higher derivatives follow similarly. 
\end{proof}

We now construct an approximate solution $v^\mathrm{app}$ by gluing together $v^+$ and $v^-$. To measure the residual error of the approximate solution on the whole real line, we define 
\begin{align}
\Fres [v] = v_t - D \omega \partial_y^2 (\omega^{-1} v) - \left( c_* - \frac{3}{2 \eta_* (t+T)} \right) \omega \partial_y (\omega^{-1} v) - f'(0) v - \omega N(\omega^{-1} v). \label{e: F res def}
\end{align}
However, we cannot simply match $v^-(y)$ with $v^+(y,t)$ exactly at $y = (t+T)^\mu$, since according to Lemma \ref{l: colinear pointwise matching} doing so would produce a discontinuity at $y = (t+T)^\mu$, which is decaying in time but nonetheless present. Differentiating such an approximate solution upon substituting it into $F_\mathrm{res}$ would produce terms involving the Dirac delta and its derivatives. To avoid having to consider distribution-valued solutions, we instead glue $v^+$ and $v^-$ smoothly across the interval $y \in [(t+T)^\mu, (t+T)^\mu +1]$. We first define a smooth positive cutoff function $\chi_0$ satisfying 
\begin{align}
\chi_0 (y) = \begin{cases}
1, & y \leq 0, \\
0, & y \geq 1,
\end{cases}
\end{align}
and $0 \leq \chi_0 \leq 1$. We then define the time-varying cutoff 
\begin{align}
\chi(y, t) = \chi_0 (y - (t+T)^\mu). \label{e: chi t def}
\end{align}
We now use this cutoff to define our approximate solution as 
\begin{align}
\vapp(y,t) = \chi(y,t) v^-(y) + (1-\chi(y,t)) v^+ (y,t).
\end{align}
Note that $\vapp$ is smooth, since every term on the right hand side is smooth. 

\begin{prop}\label{p: colinear residual}
	Let $\beta_0 = \sqrt{\Deff}$ and $y_0 = 1+a$. Fix $0 < \mu < \frac{1}{8}$ and let $r = 2 + \mu$. There exists a constant $C > 0$ such that for all $T$ sufficiently large and $t > 0$, we have 
	\begin{align}
	\| F_\mathrm{res} [v^\mathrm{app}] (\cdot, t) \|_{L^\infty_{0, r}} \leq \frac{C}{(t+T)^{1/2-4\mu}}. \label{e: colinear residual estimate}
	\end{align}
	We denote $R (y,t) = F_\mathrm{res}[v](y,t)$. 
\end{prop}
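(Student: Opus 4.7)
The plan is to partition $\R$ into three regions determined by the cutoff $\chi$ and estimate $\Fres[\vapp]$ on each separately, following the scheme in \cite[Section 2.3]{CAMS} while taking care to address the nonlinearity in the matching region. Throughout I denote the linear operator $L v = v_t - D\omega \partial_y^2(\omega^{-1} v) - (c_* - \tfrac{3}{2\eta_*(t+T)}) \omega \partial_y(\omega^{-1} v) - f'(0) v$, so that $\Fres[v] = L v - \omega N(\omega^{-1} v)$.

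For $y \leq (t+T)^\mu$ we have $\chi = 1$ and $\vapp = v^-$. Since $q_*$ satisfies $Dq_*'' + c_* q_*' + f(q_*) = 0$, direct substitution yields the explicit residual $\Fres[v^-](y,t) = \tfrac{3}{2\eta_*(t+T)} \omega(y) q_*'(y)$, arising entirely from the logarithmic drift correction. Using \eqref{e: front asymptotics} and exponential stability at $u_-$, the factor $\omega q_*'$ decays exponentially as $y \to -\infty$ and is bounded by $C(1 + |y|)$ for $y \geq 0$. After applying the weight $\rho_{0,r}$ and maximizing in $y$, I obtain a bound of order $(t+T)^{3\mu + \mu^2 - 1}$, which is dominated by $C(t+T)^{4\mu - 1/2}$ for $\mu < 1/8$ since $\mu^2 - \mu - 1/2 < 0$. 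For $y \geq (t+T)^\mu + 1$ we have $\vapp = v^+$ and Corollary \ref{c: colinear vplus residual} directly gives $|\Fres[v^+](y,t)| \leq C(t+T)^{-3/2} e^{-(y+y_0)^2/(8\Deff(t+T))}$; optimizing $y^{2+\mu}$ against the Gaussian on the diffusive scale $y \sim \sqrt{t+T}$ yields a weighted bound of order $(t+T)^{\mu/2 - 1/2}$, again dominated by $C(t+T)^{4\mu - 1/2}$.

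The delicate case is the matching interval $y \in [(t+T)^\mu, (t+T)^\mu + 1]$. Writing $\vapp = v^+ + \chi(v^- - v^+)$ and using $Lv^\pm = \Fres[v^\pm] + \omega N(\omega^{-1} v^\pm)$, a direct computation produces
\begin{align*}
\Fres[\vapp] = (1-\chi) \Fres[v^+] + \chi \Fres[v^-] + \mathcal{C}[v^- - v^+] + \mathcal{N},
\end{align*}
where $\mathcal{C}[w] = \chi_t w - D\chi_{yy} w - 2D \chi_y \omega \partial_y(\omega^{-1} w) - (c_* - \tfrac{3}{2\eta_*(t+T)}) \chi_y w$ is a commutator and $\mathcal{N} = (1-\chi)\omega N(\omega^{-1}v^+) + \chi \omega N(\omega^{-1}v^-) - \omega N(\omega^{-1} \vapp)$ is the nonlinear correction. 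Lemma \ref{l: colinear pointwise matching} gives $|v^- - v^+|$ and $|v^-_y - v^+_y|$ of order $(t+T)^{\mu - 1/2}$ on this interval, while $|\chi_t| \leq C(t+T)^{\mu - 1}$ and $\chi_y, \chi_{yy}$ are uniformly bounded. Hence $|\mathcal{C}[v^- - v^+]| \leq C(t+T)^{\mu - 1/2}$, and multiplying by $y^{2+\mu} \sim (t+T)^{\mu(2+\mu)}$ yields a weighted contribution of order $(t+T)^{\mu^2 + 3\mu - 1/2}$, dominated by $(t+T)^{4\mu - 1/2}$ for $\mu \in [0,1]$.

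The main obstacle is the nonlinear term $\mathcal{N}$, which is precisely the point at which the earlier linearity-based argument in \cite[Proposition 2.5]{CAMS} requires correction. Since $|N(u)| \leq C|u|^2$, $|v^\pm| \leq C(t+T)^\mu$ in the matching region, and $\omega^{-1}(y) \leq e^{-\eta_*(t+T)^\mu}$ there, each of the three terms defining $\mathcal{N}$ is bounded by $C e^{-\eta_*(t+T)^\mu}(t+T)^{2\mu}$. This super-exponential decay in $(t+T)^\mu$ trivially absorbs any polynomial growth from the weight, so $\mathcal{N}$ contributes negligibly. Combining the three region estimates then yields \eqref{e: colinear residual estimate}.
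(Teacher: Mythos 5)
Your proof is correct and follows essentially the same three-region decomposition and commutator argument as the paper's, including the key step of explicitly isolating the nonlinear correction in the matching interval (which the paper flags as the fix to the gap in the earlier scalar treatment). The only cosmetic differences are that you write out the commutator $\mathcal{C}$ and the nonlinear defect $\mathcal{N}$ before splitting linear from nonlinear, whereas the paper splits first, and you report slightly sharper intermediate exponents in each region; both organizations yield the same bound.
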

\begin{proof}
	Note that $v^\mathrm{app}(y,t) = v^+(y,t)$ for $y \geq (t+T)^\mu + 1$, and so by Corollary \ref{c: colinear vplus residual}, we have
	\begin{align}
	\| 1_{\{ \cdot \geq (t+T)^\mu + 1\}} F_\mathrm{res} [v^\mathrm{app}] (\cdot, t) \|_{L^\infty_{0, r}} &= \| 1_{\{ \cdot \geq (t+T)^\mu + 1\}} F_\mathrm{res} [v^+(\cdot, t)] \|_{L^\infty_{0, r}} \\
	&\leq \frac{C}{(t+T)^{1/2 - \mu/2}}. 
	\end{align}
	On the other hand, for $y \leq (t+T)^\mu$, we have $v^\mathrm{app} (y,t) = v^-(y) = \omega(y) q_*(y)$, and note that $q_*$ solves the equation \eqref{e: U eqn} in the absence of the term $-\frac{3}{2 \eta_*} (t+T)^{-1}$, which implies that
	\begin{align*}
		F_\mathrm{res} [v^-] = \omega \frac{3}{2\eta_* (t+T)} \partial_y q_*.
	\end{align*}
	In particular, using that $|\omega (y)\partial_y q_*(y)| \leq C \rho_{0,1}(y)$ by \eqref{e: front asymptotics}, we have
	\begin{align*}
	\| 1_{\{\cdot \leq (t+T)^\mu\}} F_\mathrm{res} [v^\mathrm{app}] (\cdot, t) \| \leq C \frac{(t+T)^\mu}{t+T} \leq \frac{C}{(t+T)^{1/2-\mu}}. 
	\end{align*}
	
	It only remains to estimate $F_\mathrm{res} [v^\mathrm{app}] (y,t)$ on the intermediate region $(t+T)^\mu \leq y \leq (t+T)^\mu + 1$. To handle this region, we first decompose $F_\mathrm{res}[v]$ into linear and nonlinear parts, writing
	\begin{align}
	F_\mathrm{res}[v] = F_\mathrm{res}^\mathrm{lin}[v] + F_\mathrm{res}^\mathrm{nl} [v],
	\end{align}
	with
	\begin{align}
	F_\mathrm{res}^\mathrm{lin} [v] = v_t - D \omega \partial_y^2 (\omega^{-1} v) - \left( c_* - \frac{3}{2 \eta_* (t+T)} \right) \omega \partial_y (\omega^{-1} v) - f'(0) v, 
	\end{align}
	and
	\begin{align}
	F_\mathrm{res}^\mathrm{nl} [v] = - \omega N(\omega^{-1} v). 
	\end{align}
	First we show that the nonlinearity is irrelevant on this length scale. Since $\omega^{-1} v^\mathrm{app}$ is uniformly bounded, by Taylor's theorem there exists a constant $C > 0$ such that
	\begin{align*}
	|F_\mathrm{res}^\mathrm{nl} [v^\mathrm{app}]| \leq C \omega^{-1} |v^\mathrm{app}|^2. 
	\end{align*}
	Since $\omega(y) = e^{\eta_* y}$ for $y \geq 1$, we then have
	\begin{align*}
		1_{\{ (t+T)^\mu \leq y \leq (t+T)^\mu + 1 \}} |F_\mathrm{res}^\mathrm{nl} [v^\mathrm{app}] (y,t)| &\leq C 1_{\{ (t+T)^\mu \leq y \leq (t+T)^\mu + 1 \}} e^{-\eta_* (t+T)^\mu} |v^\mathrm{app}|^2 \\
		&\leq \frac{C}{(t+T)^{\mu - 1/2}},
	\end{align*}
	and so the nonlinearity is irrelevant in this regime, as claimed. 
	
	For the linear part, we then write
	\begin{align*}
	F_\mathrm{res}^\mathrm{lin} [v^\mathrm{app}] = \chi F_\mathrm{res}^\mathrm{lin} [v^-] + (1-\chi) F_\mathrm{res}^\mathrm{lin} [v^+] + [F_\mathrm{res}^\mathrm{lin}, \chi] (v^- - v^+),
	\end{align*}
	with commutator
	\begin{align*}
	[F_\mathrm{res}^\mathrm{lin}, \chi] v = F_\mathrm{res}^\mathrm{lin} [\chi v] - \chi F_\mathrm{res}^\mathrm{lin} [v]. 
	\end{align*}
	It follows from the same arguments for the $y \geq (t+T)^\mu + 1$ region above that
	\begin{align*}
	\left\| 1_{\{ (t+T)^\mu \leq \cdot \leq (t+T)^\mu + 1 \}} \left( \chi(\cdot, t) F_\mathrm{res}^\mathrm{lin} [v^-](\cdot, t) + (1-\chi(\cdot, t)) F_\mathrm{res}^\mathrm{lin} [v^+](\cdot, t) \right) \right\|_{L^\infty_{0, r}} \leq \frac{C}{(t+T)^{1/2-\mu}}. 
	\end{align*}
	It then only remains to control the terms involving the commutator $[F_\mathrm{res}^\mathrm{lin}, \chi]$, which is a first order differential operator in space, with smooth, bounded coefficients. Hence
	\begin{align}
	\left\|  1_{\{ (t+T)^\mu \leq \cdot \leq (t+T)^\mu + 1 \}} \left( [F_\mathrm{res}^\mathrm{lin}, \chi] (v^- - v^+) (\cdot, t) \right) \right\|_{L^\infty_{0, r}} \leq C \|  1_{\{ (t+T)^\mu \leq \cdot \leq (t+T)^\mu + 1 \}} |(1 + \partial_y) [v^- (\cdot, t) - v^+(\cdot, t)] \|_{L^\infty_{0, r}}. 
	\end{align}
	We now use the estimates of Lemma \ref{l: colinear pointwise matching} to expand $v^- - v^+$ on the region of interest. By Taylor's theorem and Lemma \ref{l: colinear pointwise matching}, we have
	\begin{align}
	v^+(y,t) &= v^+((t+T)^\mu, t) + v^+_y((t+T)^\mu, t) + \mathrm{O}((t+T)^{\mu-1/2}), \\
	v^-(y,t) &= v^-((t+T)^\mu, t) + v^-_y((t+T)^\mu, t) + \mathrm{O}((t+T)^{\mu-1/2})
	\end{align} 
	for $(t+T)^\mu \leq y \leq (t+T)^\mu + 1$. Taking the difference of these two expansions and using Lemma \ref{l: colinear pointwise matching} to estimate the difference between individual terms, we find
	\begin{align*}
	\langle y \rangle^{2 + \mu}1_{\{ (t+T)^\mu \leq y \leq (t+T)^\mu + 1 \}} | v^- (y, t) - v^+(y, t)| &\leq C (t+T)^{2 \mu + \mu^2} (t+T)^{\mu-1/2} \\
	&\leq \frac{C}{(t+T)^{1/2 - 4 \mu}}. 
	\end{align*}
	Arguing similarly, we obtain an analogous estimate on $\partial_y(v^-(y,t) - v^+(y,t))$ in this region. Hence we conclude
	\begin{align*}
	\left\|  1_{\{ (t+T)^\mu \leq y \leq (t+T)^\mu + 1 \}} \left( [F_\mathrm{res}^\mathrm{lin}, \chi] (v^- - v^+) (\cdot, t) \right) \right\|_{L^\infty_{0, r}} \leq \frac{C}{(t+T)^{1/2-4\mu}},
	\end{align*}
	which completes the proof of the desired estimates. 
\end{proof}

For later use, we record the following result which says that $q_*$ is well approximated by $\omega^{-1} \vapp$ for large times, which follows from Lemma \ref{l: colinear pointwise matching} and the choice of $v^+$ in Corollary \ref{c: colinear vplus residual}. 
\begin{corollary}\label{c: colinear front approximated by vapp}
	Fix $0 < \mu < \frac{1}{8}$ and let $r = 2 + \mu$. There exists a constant $C>0$ such that
	\begin{align}
	\| q_* - \omega^{-1} \vapp(\cdot, t) \|_{L^\infty_{0, r}} \leq \frac{C}{(t+T)^{1/2-4\mu}}
	\end{align}
	for all $t>0$ provided $T$ is sufficiently large. 
\end{corollary}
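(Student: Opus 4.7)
\textbf{Proof plan for Corollary \ref{c: colinear front approximated by vapp}.}
The plan is to exploit the definition of $\vapp$ to reduce the claim to estimates already essentially obtained in Lemma \ref{l: colinear pointwise matching} and Corollary \ref{c: colinear vplus residual}, plus a direct comparison of $q_*$ and $\omega^{-1}v^+$ on the far-field region. First I would observe that $\omega^{-1} \vapp = \chi q_* + (1-\chi) \omega^{-1} v^+$, so that
\begin{align*}
q_*(y) - \omega^{-1}(y)\vapp(y,t) = (1-\chi(y,t))\,\omega^{-1}(y)\bigl(v^-(y) - v^+(y,t)\bigr).
\end{align*}
Since $1 - \chi$ vanishes for $y \leq (t+T)^\mu$, this immediately kills the contribution from the bulk, and I only need to control the expression on the intermediate transition region $(t+T)^\mu \leq y \leq (t+T)^\mu + 1$ and on the far-field region $y \geq (t+T)^\mu + 1$.

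On the transition region, I would repeat the Taylor expansion argument from the end of the proof of Proposition \ref{p: colinear residual}. Lemma \ref{l: colinear pointwise matching} gives $|v^-(y) - v^+(y,t)| = \mathrm{O}((t+T)^{\mu-1/2})$ uniformly in $y$ across this unit interval, using the pointwise and first-derivative matching at $y = (t+T)^\mu$ together with the higher-derivative bounds \eqref{e:colinear higher derivative estimate 1}--\eqref{e: colinear higher derivative estimate 2}. Multiplying by the weight $\langle y\rangle^{2+\mu} \leq C(t+T)^{\mu(2+\mu)}$ and noting that $\omega^{-1} \leq 1$, the contribution from this region is at most $C(t+T)^{\mu(2+\mu)+\mu-1/2} \leq C(t+T)^{4\mu - 1/2}$ for $\mu < 1/8$, which is the required rate.

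On the far-field region $y \geq (t+T)^\mu + 1$, I would write out both terms explicitly. From \eqref{e: front asymptotics} and $u_0 = u_1$, $q_*(y) = u_0(y + 1 + a)e^{-\eta_* y} + \mathrm{O}(e^{-(\eta_*+\eta)y})$, while from Corollary \ref{c: colinear normal form residual} and Lemma \ref{l: normal form colinear}, $\omega^{-1}v^+(y,t) = u_0(y+y_0)e^{-\eta_* y}\, e^{-(y+y_0)^2/[4\Deff(t+T)]} + \omega^{-1}(y)\bigl[u_0\psi^I_1(\xi) + Q(0,\phihp)^T\bigr]$. With the choice $y_0 = 1+a$, the leading algebraic prefactors agree, so the main contribution is $u_0(y+y_0)e^{-\eta_* y}\bigl(1 - e^{-(y+y_0)^2/[4\Deff(t+T)]}\bigr)$. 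I would split into $y \leq (t+T)^{1/2}$, where the Gaussian bracket is $\mathrm{O}(y^2/(t+T))$ so weighted by $\langle y\rangle^{2+\mu}e^{-\eta_* y}(y+y_0)$ is bounded uniformly by $C/(t+T)$, and $y \geq (t+T)^{1/2}$, where the $e^{-\eta_* y}$ factor dominates any algebraic growth and the contribution is super-polynomially small in $t$. The remainder terms involving $\psi_1^I$ and $\psih_{0,1}$ satisfy Gaussian bounds in $\xi$ (Lemmas \ref{l: colinear psih 0 estimates}--\ref{l: colinear psih 1}) and carry additional factors $(t+T)^{-1/2}$ or $(t+T)^{-1}$, so together with $\omega^{-1} = e^{-\eta_* y}$ on this region they are easily absorbed.

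The main obstacle is bookkeeping rather than conceptual: one must verify that the algebraic polynomial weight $\langle y\rangle^{2+\mu}$ does not overwhelm any of the correction terms on the far-field region, which is what forces the use of both the Gaussian structure of $\psi_0^I$ and the exponential weight $\omega^{-1}$. Since $\mu < 1/8$ is fixed, combining all three regional estimates yields the claimed $(t+T)^{-(1/2-4\mu)}$ bound, with the worst term originating on the transition region exactly as in Proposition \ref{p: colinear residual}.
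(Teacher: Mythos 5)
Your proof is correct and follows the route the paper implicitly indicates (the paper merely cites Lemma \ref{l: colinear pointwise matching} and the structure of $v^+$ without spelling out details). The key identity $q_* - \omega^{-1}\vapp = (1-\chi)\,\omega^{-1}(v^- - v^+)$ is exactly the right reduction, and your three-region split (transition interval via the Taylor-plus-matching bounds of Lemma \ref{l: colinear pointwise matching}, far field $y\leq(t+T)^{1/2}$ via $|1-e^{-z}|\leq z$, far field $y\geq(t+T)^{1/2}$ via the exponential weight) closes the estimate with the claimed rate. One small imprecision: in the far-field remainder you say $\psi^\mathrm{I}_1$ ``carries additional factors $(t+T)^{-1/2}$ or $(t+T)^{-1}$'', but $\psi^\mathrm{I}_1$ enters $\phi^{\mathrm{I},+}$ with coefficient $1$, not a decaying prefactor; your conclusion still holds because, as you note, the weight $\langle y\rangle^{2+\mu}e^{-\eta_* y}$ restricted to $y\geq(t+T)^\mu+1$ is already super-polynomially small in $(t+T)$, so the decaying time factors are not actually needed for that term.
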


\section{Constructing the approximate solution --- $u_0$, $u_1$ linearly independent}\label{s: linearly independent}

We will prove Theorem \ref{t: main} in the case where $u_0$ and $u_1$ are co-linear by establishing an appropriate nonlinear stability result for the approximate solution $v_\mathrm{app}$ constructed Section \ref{s: colinear}. This stability argument, however, will only rely on the estimates of Proposition \ref{p: colinear residual} and Corollary \ref{c: colinear front approximated by vapp} together with sharp estimates on the semigroup $e^{\mcl t}$ generated by the linearization about $q_*$. In the case where $u_0$ and $u_1$ are linearly independent, we will construct a modified approximate solution with a different form, but importantly this approximate solution will still obey the estimates of Proposition \ref{p: colinear residual} and Corollary \ref{c: colinear front approximated by vapp}. We will also have the same estimates on the semigroup $e^{\mcl t}$ in this case, so \emph{exactly the same} nonlinear stability argument will apply here as in the co-linear case.

Hence, we first construct an approximate solution $v_\mathrm{app}$ in the linearly independent case, rather than completing the proof of Theorem \ref{t: main} in the colinear case right away, since the proof will be the same in both cases once we have obtained the analogous results of Proposition \ref{p: colinear residual} and Corollary \ref{c: colinear front approximated by vapp}. 

We therefore now assume that Hypothesis \ref{hyp: DR matrix pencil} holds, but with $u_0$ and $u_1$ linearly independent. We first construct a diffusive normal form for the matrix symbol. 

\subsection{Diffusive normal form}
We show that in this case, the symbol $A(\lambda, \nu)$ may be transformed into the form \eqref{e: normal 2}. 
\begin{lemma}[Diffusive normal form --- linearly independent case]\label{l: lin ind normal form}
	Assume now that $u_0$ and $u_1$ are linearly independent. Then there exist invertible matrices $S, Q \in \R^{n \times n}$ such that, for $n \geq 3$, 
	\begin{align}
	B(\lambda, \nu) := S A(\lambda, \nu) Q = \begin{pmatrix}
	-\lambda + b_{11}^{02} \nu^2 & b_{12}^{01} \nu + \tilde{b}_{12}(\lambda, \nu) & b_{13}(\lambda, \nu) \\
	- \nu + \tilde{b}_{21}(\lambda, \nu) & 1 + \tilde{b}_{22} (\lambda, \nu) & b_{23}(\lambda, \nu) \\
	b_{31}(\lambda, \nu) & b_{32}(\lambda, \nu) & b_{33}(\lambda, \nu) 
	\end{pmatrix}, \label{e: lin ind normal form}
	\end{align}
	where $b_{11}^{02}, b_{12}^{02} \in \R$ satisfy $b_{11}^{02} + b_{12}^{01} > 0$, and the polynomials
	\begin{align*}
	\tilde{b}_{12}(\lambda, \nu), \tilde{b}_{21}(\lambda, \nu), \tilde{b}_{22}(\lambda, \nu) &\in \C, \\
	b_{13}(\lambda, \nu), b_{23} (\lambda, \nu) &\in \C^{1 \times n-2}, \\
	b_{31}(\lambda, \nu), b_{32} (\lambda, \nu) &\in \C^{n-2 \times 1}, 
	\end{align*}
	and $b_{33}(\lambda, \nu) \in \C^{n-2 \times n-2}$ satisfy
	\begin{align}
	\tilde{b}_{12} (\lambda, \nu) &= b_{12}^{10} \lambda + b_{12}^{02} \nu^2, \\
	b_{13} (\lambda, \nu) &= b_{13}^{01} \nu + \mathrm{O}(\lambda, \nu^2) =: b_{13}^{01} \nu + \tilde{b}_{13}(\lambda, \nu) \\
	\tilde{b}_{21} (\lambda, \nu) &= b_{12}^{10} \lambda + b_{21}^{02} \nu^2, \\
	\tilde{b}_{22} (\lambda, \nu) &= b_{22}^{01} \nu + b_{22}^{10} \lambda + b_{22}^{02} \nu^2  \\
	b_{23} (\lambda, \nu) &= b_{23}^{01} \nu + \mathrm{O}(\lambda, \nu^2) =: b_{23}^{01} \nu + \tilde{b}_{23} (\lambda, \nu) \\
	b_{31} (\lambda, \nu) &= b_{31}^{10} \lambda + b_{31}^{02} \nu^2 , \\
	b_{32} (\lambda, \nu) &= b_{32}^{01} \nu + b_{32}^{10} \lambda + b_{32}^{02} \nu^2 \\
	b_{33}(\lambda, \nu) &= b_{33}^{00} + b_{33}^{01} \nu + \mathrm{O}(\lambda, \nu^2) =: b_{33}^{00} + b_{33}^{01} \nu + \tilde{b}_{33} (\lambda, \nu),
	\end{align}
	and $b_{33}^{00}$ is invertible. If $n = 2$, then the form \eqref{e: lin ind normal form} is simply replaced by
	\begin{align}
	B(\lambda, \nu) = S A(\lambda, \nu) Q = \begin{pmatrix}
	- \lambda + b_{11}^{02} \nu^2 & b_{12}^{01} \nu + \tilde{b}_{12}(\lambda, \nu) \\
	-\nu + \tilde{b}_{21}(\lambda, \nu) & 1 + \tilde{b}_{22}(\lambda, \nu). 
	\end{pmatrix}
	\end{align}
\end{lemma}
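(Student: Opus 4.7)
Following the stepwise approach of Lemma \ref{l: normal form colinear}, I would first set the first two columns of $Q$ to $Qe_0 = u_0$ and $Qe_1 = u_1$, extending arbitrarily to a basis of $\R^n$. The Jordan chain relations $A^0 u_0 = 0$ and $A^0 u_1 = -A^{01} u_0$ from Hypothesis \ref{hyp: DR matrix pencil} then give $B^0 e_0 = 0$ and $B^0 e_1 = -S A^{01} u_0$. I would then take the first row of $S$ to be $c e_{\mathrm{ad}}^T$ with $c = \langle u_0, e_{\mathrm{ad}}\rangle^{-1}$, which (as in the co-linear case) zeros the first row of $B^0$ and pins $b_{11}^{10} = -1$.

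The new element is the construction of the second row $s_1^T$ of $S$. Since $u_0, u_1$ are linearly independent we have $A^{01} u_0 \neq 0$ (else $A^0 u_1 = 0$ would force $u_1 \in \spn(u_0)$), so I can pick $s_1$ with $\langle A^{01} u_0, s_1\rangle = -1$, producing the $1$ in the $(2,2)$-position of $B^0$ and the $-\nu$ in the $(2,1)$-position of $B$. The relation $A^{01} u_0 = -A^0 u_1 \in \mathrm{range}(A^0) \perp e_{\mathrm{ad}}$ automatically yields the vanishing $b_{12}(0,0) = 0$. The remaining rows $s_2^T,\ldots,s_{n-1}^T$ of $S$ are chosen inside $(A^{01} u_0)^\perp$ so that together with $c e_{\mathrm{ad}}$ and $s_1$ they form a basis of $\R^n$, which forces the lower $n-2$ entries of the second column of $B^0$ to vanish. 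Finally, to kill the constant terms in the $(2,j)$ entries for $j \geq 2$, I would complete the remaining columns of $Q$ by extending $u_0$ to a basis of the $(n-1)$-dimensional subspace $A^{0,-1}(s_1^\perp)$: this subspace contains $u_0$ (since $A^0 u_0 = 0$) but not $u_1$ (precisely because $\langle A^{01} u_0, s_1\rangle \neq 0$), so the resulting vectors together with $u_0, u_1$ form a basis of $\R^n$.

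The crucial sign condition $b_{11}^{02} + b_{12}^{01} > 0$ reduces, after reading off $b_{11}^{02} = c \langle A^{02} u_0, e_{\mathrm{ad}}\rangle$ and $b_{12}^{01} = c \langle A^{01} u_1, e_{\mathrm{ad}}\rangle$ from the expansion $A = A^0 + A^{10}\lambda + A^{01}\nu + A^{02}\nu^2$, to
\begin{align*}
b_{11}^{02} + b_{12}^{01} = c \, \langle A^{02} u_0 + A^{01} u_1, e_{\mathrm{ad}}\rangle,
\end{align*}
which is positive because \eqref{e: hyp simple DR} forces $\langle u_0, e_{\mathrm{ad}}\rangle$ and $\langle A^{02} u_0 + A^{01} u_1, e_{\mathrm{ad}}\rangle$ to carry the same sign as $c$. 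The remaining polynomial forms of the entries $b_{ij}$ follow by direct inspection of the expansion of $A$ (noting in particular the absence of mixed $\lambda\nu$ terms in $A$), and invertibility of $b_{33}^{00}$ follows from $\dim \ker A^0 = 1$ together with the fact that the Jordan chain directions have already been exhausted in the first two columns of $B^0$.

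The main obstacle, compared to the co-linear case, is simultaneously coordinating $S$ and $Q$ so that all of the vanishing patterns, the $-\nu$ in the $(2,1)$ slot, and the $+1$ in the $(2,2)$ slot appear at once while preserving the diffusivity sign inherited from \eqref{e: hyp simple DR}. Once the dual chain $c e_{\mathrm{ad}}, s_1$ is lined up against the Jordan chain $u_0, u_1$, the remaining checks are routine linear algebra, and the $n = 2$ case is recovered by simply omitting the third-row-and-column entries throughout.
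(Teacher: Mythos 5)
Your proof is correct and follows essentially the same route as the paper: you align the first two columns of $Q$ with the Jordan chain $u_0, u_1$, choose the first two rows of $S$ to be the cokernel direction $c\,e_{\mathrm{ad}}^T$ and a vector $s_1$ dual to $A^0 u_1$, and then pick the remaining rows of $S$ inside $(A^{01} u_0)^\perp = (A^0 u_1)^\perp$ and the remaining columns of $Q$ inside $A^{0,-1}(s_1^\perp)$ — which is the same condition as the paper's orthogonality to $(A^0)^T A^0 u_1$ once $s_1$ is normalized proportional to $A^0 u_1$. The sign computation $b_{11}^{02} + b_{12}^{01} = c\,\langle A^{02}u_0 + A^{01}u_1, e_{\mathrm{ad}}\rangle > 0$ and the invertibility of $b_{33}^{00}$ from the one-dimensional kernel also match the paper's argument.
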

\begin{proof}
	We assume $n \geq 3$, since the $n = 2$ case may be recovered by simply ignoring all parts of the argument involving terms $b_{j3}$. As in the proof of Lemma \ref{l: normal form colinear} in the co-linear case, we first let $S$ and $Q$ be arbitrary invertible matrices, and write 
	\begin{align}
	B(\lambda, \nu) = S A(\lambda, \nu) Q = \begin{pmatrix}
	b_{11} (\lambda, \nu) & b_{12}(\lambda, \nu) & b_{13}(\lambda, \nu) \\
	b_{21} (\lambda, \nu) & b_{22} (\lambda, \nu) & b_{23} (\lambda, \nu) \\
	b_{31}(\lambda, \nu) & b_{32}(\lambda, \nu) &  b_{33}(\lambda, \nu),
	\end{pmatrix}
	\end{align}
	and sequentially modify $S$ and $Q$ to eliminate undesirable terms in $B(\lambda, \nu)$. We also again expand $B$ as 
	\begin{align}
	B(\lambda, \nu) = B^0 + B^{10} \lambda + B^{01} \nu + B^{02} \nu^2. \label{e: lin ind B expansion}
	\end{align}
	
	\emph{Step 1: $b_{11}(0,0) = b_{12} (0,0) = b_{21} (0,0) = 0, b_{13}(0,0) = 0$, and $b_{31}(0,0) = 0$}. 
	
	As in the co-linear case, choosing $Q$ and $S$ such that $Q e_0 = u_0$ and $(S^0)^T e_0 = c e_\mathrm{ad}$ for some constant $c \in \R$, where $e_0 = (1, 0, ..., 0)^T$, guarantees that $\ker B_0 = \ker ((B^0)^T) = \spn(e_0)$, i.e. that the first row and column of $B^0$ vanish, as desired. 
	
	\emph{Step 2: $b_{22}(\lambda, \nu) = 1 + \mathrm{O}(\lambda, \nu)$, and $b_{32} (\lambda, \nu) = \bigo(\lambda, \nu)$}. 
	
	We first modify the second column of $Q$ so that $Q e_1 = u_1$, where $e_1 = (0, 1, 0, ..., 0)^T$. We can do this while maintaining invertibility of $Q$ since $u_0$ and $u_1$ are linearly independent. The desired condition is equivalent to $B^0 e_1 = S A^0 Q e_1 = e_1$, which then in turns becomes 
	\begin{align*}
	SA^0 u_1 = e_1. 
	\end{align*}
	Note that the condition $S^T e_0 = ce_\mathrm{ad}$ implies that the first entry of the vector $S A^0 u_1$ is equal to $\langle S A^0 u_1, e_0 \rangle = \langle A^0 u_1, c e_\mathrm{ad} \rangle = \langle u_1, c (A^0)^T e_\mathrm{ad} \rangle = 0$. This also shows that the first row of $S$, which is proportional to $e_\mathrm{ad}^T$, is linearly independent from $A^0 u_1$. We can therefore choose to second row of $S$ to be 
	\begin{align}
	s_2^T = \frac{(A^0 u_1)^T}{|A^0 u_1|^2} \label{e: s2 formula}
	\end{align}
	while maintaining invertibility of $S$. We then let $\{ s_3, ..., s_n \}$ denote a basis for the orthogonal complement in $\R^n$ of $\spn (e_\mathrm{ad}, s_2)$, and set 
	\begin{align}
	S = \begin{pmatrix}
	c e_\mathrm{ad}^T \\
	s_2^T \\
	s_3^T \\
	\vdots \\
	s_n^T
	\end{pmatrix}. 
	\end{align}
	Since each row of $S$ is orthogonal to $A^0 u_1$ except the second row, it follows that $S A^0 u_1 = e_1$, as desired.
	
	\emph{Step 3: $b_{21} (\lambda, \nu) = - \nu + \bigo(\lambda, \nu^2)$ and $b_{31}(\lambda, \nu) = \bigo (\lambda, \nu^2)$. } 
	
	From examining the expansions of $A$ and $B$, we find that \eqref{e: hyp double root} of Hypothesis \ref{hyp: DR matrix pencil} implies that 
	\begin{align*}
	B^{01} e_0 = - B^0 e_1. 
	\end{align*}
	Having shown in the previous step that $B^0 e_1 = e_1$, we conclude that $B^{01} e_0 = - e_1$, which implies the desired conditions. 
	
	\emph{Step 4: $b_{23}(0,0) = 0$}. The condition $b_{23}(0,0) = 0$ is equivalent to $\langle (B^0)^T e_1, e_j \rangle = 0$, for $j = 2, 3, ..., n-1$ where $e_j \in \R^n$ is the vector whose $(j+1)$-th entry is equal to 1, with all other entries equal to zero. Using that $B^0 = S A^0 Q$, and $S^T e_1 = \alpha A^0 u_1$ for $\alpha = |A^0 u_1|^{-2}$ from \eqref{e: s2 formula}, we rewrite this condition as 
	\begin{align}
	\langle (A^0)^T A^0 u_1, Q e_j \rangle = 0, \quad j = 2, 3, ... n-1. \label{e: normal form li step 4}
	\end{align}
	That is, we need the last $n-2$ columns of $Q$ to be orthogonal to the vector $(A^0)^T A^0 u_1$, while maintaining invertibility of $Q$. Note that since $Q e_0 = u_0$ and $A^0 u_0 = 0$, we have
	\begin{align*}
	\langle (A^0)^T A^0 u_1, Q e_0 \rangle = \langle A^0 u_1, A^0 u_0 \rangle = 0,
	\end{align*}
	while
	\begin{align*}
	\langle (A^0)^T A^0 u_1, Q e_1 \rangle = \langle A^0 u_1, A^0 u_1 \rangle \neq 0
	\end{align*}
	since $Q e_1 = u_1$. That is, the vector $(A^0)^T A^0 u_1$ is orthogonal to the first column of $Q$ but not to the second column of $Q$. Since we are assuming $n \geq 3$, the orthogonal complement $E^\perp$ of the vector $(A^0)^T A^0 u_1$ in $\R^n$ has dimension $n-1 \geq 2$. Let $\{ v_2, ..., v_{n-1}\}$ denote a basis for $E^\perp \setminus \spn (Q e_0)$, which has dimension $n-2$. We then choose
	\begin{align}
	Q = \begin{pmatrix} 
	u_0 & u_1 & v_2 & ... & v_{n-1}.
	\end{pmatrix}
	\end{align}
	The columns of $Q$ are then linearly independent, so $Q$ is invertible, and the last $n-2$ columns are orthogonal to $(A^0)^T A^0 u_1$, by construction, which guarantees the desired condition holds.

	\emph{Step 5: Verifying $b_{11}^{02} + b_{12}^{01} > 0$ and $b_{11}(\lambda, \nu) = -\lambda+ \bigo(\nu^2)$.} 
	
	We observe from \eqref{e: lin ind B expansion} that 
	\begin{align*}
	b_{11}^{02} = \langle B^{02} e_0, e_0 \rangle, \quad b_{12}^{01} = \langle B^{01} e_1, e_0 \rangle. 
	\end{align*}
	Matching terms in the expansions of $A$ and $B$, and using that $Q e_0 = u_0$, $Q e_1 = u_1$, and $S^T e_0 = c e_\mathrm{ad}$, we conclude 
	\begin{align*}
	b_{11}^{02} =  \langle B^{02} e_0, e_0 \rangle = \langle S A^{02} Q e_0, e_0 \rangle = c \langle A^{02} u_0, e_\mathrm{ad} \rangle 
	\end{align*}
	and 
	\begin{align*}
	b_{12}^{01} = \langle B^{01} e_1, e_0 \rangle = c\langle A^{01} u_1, e_\mathrm{ad} \rangle. 
	\end{align*}
	Hence \eqref{e: hyp simple DR} of Hypothesis \ref{hyp: DR matrix pencil} implies that 
	\begin{align*}
	- c \langle u_0, e_\mathrm{ad} \rangle (b_{11}^{02} + b_{12}^{01}) < 0. 
	\end{align*}
	Choosing $c = \langle u_0, e_\mathrm{ad} \rangle^{-1}$ then implies that $b_{11}^{02} + b_{12}^{01} > 0$. We now observe from the expansions \eqref{e: lin ind B expansion} and \eqref{e: A expansion} for $A$ and $B$ that 
	\begin{align*}
	b_{11}^{10} = \langle B^{10} e_0, e_0 \rangle = -\langle S Q e_0, e_0 \rangle = - \langle u_0, c e_\mathrm{ad} \rangle = -1,
	\end{align*}
	as desired. 
	
	\emph{Step 6: Verifying that $b_{33}(0,0)$ is invertible.} 
	
	Note that since $b_{13}(0,0) = b_{23}(0,0) = 0$, if $b_{33}(0,0)$ any nontrivial kernel of $b_{33}(0,0)$ would contribute to the kernel of $B^0$. However, the kernel of $B^0$ is one-dimensional and spanned by $e_0$, so this is not possible, and hence $b_{33}(0,0)$ must be invertible.
\end{proof}

\subsection{Constructing the diffusive tail --- $u_0$, $u_1$ linearly independent}\label{s: lin ind diffusive tail}
Defining $v$ and $\Phi = Q^{-1} v$ as in Section \ref{s: colinear diffusive tail}, we again see that for $y \geq 1$, $\Phi(y,t)$ solves the equation
\begin{align*}
- B(\partial_t, \partial_y) \Phi - \frac{3}{2 (t+T)} SQ \Phi + \frac{3}{2 \eta_*(t+T)} SQ \Phi_x = S e^{\eta_* y} N(e^{-\eta_* y} Q \Phi), 
\end{align*}
but $S$ and $Q$ are now chosen as in Lemma \ref{l: lin ind normal form}. As a result, writing $\Phi = (\phiI, \phiII, \phih)$ we find equations of the form
\begin{align}
\partial_t \phiI &= b_{11}^{02} \partial_y^2 \phiI + \frac{3}{2(t+T)} \phiI + b_{12}^{01} \partial_y \phiII +  \mcf^\mathrm{I}_1 + \mcf^\mathrm{I}_2 \label{e: lin ind phiI eqn} \\
\phiII &= \partial_y \phiI + \FII_1 + \FII_2 + \FII_3 \\
b_{33}^{00} \phih &= \Fh_0 + \Fh_1 + \Fh_2. \label{e: lin ind phih eqn}
\end{align}
The leading order dynamics are governed by
\begin{align}
\partial_t \phiI &= b_{11}^{02} \partial_y^2 \phiI + \frac{3}{2 (t+T)} \phiI + b_{12}^{01} \partial_y \phiII \label{e: lin ind leading order 1} \\
\phiII &= \partial_y \phiI \label{e: lin ind leading order 2}\\
b_{33}^{00} \phih &= \Fh_0 (\phiI, \phiII), \label{e: lin ind leading order 3}
\end{align}
while the terms $\mathcal{F}^\mathrm{I/II/h}_j (\phiI, \phiII, \phih, y, t+T), j = 1,2,3$ encode higher order corrections, and we give explicit expressions for these terms in Appendix \ref{app: lin ind}. Inserting \eqref{e: lin ind leading order 2} into \eqref{e: lin ind leading order 1}, we find the diffusion equation
\begin{align}
\partial_t \phiI= (b_{11}^{02} + b_{12}^{01}) \partial_y^2 \phiI + \frac{3}{2 (t+T)} \phiI,
\end{align} 
and hence we define the effective diffusivity $\Deff = b_{11}^{02} + b_{12}^{01}$. 

The term involving $b_{12}^{01} \partial_y^2$ arises from diffusive effects which come from interactions of distinct components. For instance, the system 
\begin{align}
u_t &= v_x, \\
v_t & = u_x - v
\end{align}
may be viewed as a perturbation of the heat equation, since the second equation implies that $v$ exponentially relaxes to $u_x$, and when $v = u_x$ the first equation becomes the heat equation. Indeed, the dispersion relation for this system satisfies Hypothesis \ref{hyp: dispersion reln}(i), despite the system not appearing parabolic. The diffusive normal form constructed in Lemma \ref{l: lin ind normal form} gives a systematic way to recognize and interpret these ``hidden diffusive effects'' even in systems with many components. 

To precisely unravel the leading order diffusive dynamics, as in Section \ref{s: colinear}, we introduce the self-similar variables
\begin{align}
\tau = \log (t+T), \quad \xi = \frac{1}{\sqrt{\Deff}} \frac{y+y_0}{\sqrt{t+T}},
\end{align}
where $\Deff = b_{11}^{02} + b_{12}^{01}$, and define 
\begin{align}
\Psi(\xi, \tau) = \begin{pmatrix}
\psiI (\xi, \tau) \\ \psiII(\xi, \tau) \\ \psih(\xi, \tau) 
\end{pmatrix}
:= 
\begin{pmatrix}
\phiI(y,t) \\
\phiII(y,t) \\
\phih(y,t) 
\end{pmatrix} = \Phi(y,t).
\end{align}
The new unknowns $(\psiI, \psiII, \psih)$ then solve the system
\begin{align}
\partial_\tau \psiI &= b_{11}^{02} \Deff^{-1} \partial_\xi^2 \psiI+ \frac{1}{2} \xi \partial_\xi \psiI + \frac{3}{2} \psiI + b_{12}^{01} e^{\tau/2} \Deff^{-1/2} \partial_\xi \psiII + \tfi_1 + \tfi_2 \label{e: lin ind psi I eqn} \\
e^{\tau} \psiII &= e^{\tau/2} \Deff^{-1/2} \partial_\xi \psiI + \tfii_1 + \tfii_2 + \tfii_3, \label{e: lin ind psi II eqn} \\
e^\tau b_{33}^{00} \psih &= \tfh_0 + \tfh_1 + \tfh_2, \label{e: lin ind psi h eqn}
\end{align}
where $\tfi_j(\psiI, \psiII, \psih, \xi, \tau), \tfii_j(\psiI, \psiII, \psih, \xi, \tau),$ and $\tfii_j(\psiI, \psiII, \psih, \xi, \tau)$, are defined in Appendix \ref{app: lin ind self sim}. 

We again emphasize that we have multiplied through by $e^{\tau}$ in deriving \eqref{e: lin ind psi I eqn} through \eqref{e: lin ind psi h eqn}, and so will regain a factor of $e^{-\tau} = (t+T)^{-1}$ when estimating the residual error of approximate solutions in the original variables. Observe that if we neglect all the higher order corrections $\tilde{\mathcal{F}}^\mathrm{I/II/h}_j$, we obtain the leading order system
\begin{align*}
\partial_\tau \psiI &= b_{11}^{02} \Deff^{-1} \partial_\xi^2  \psiI + \frac{1}{2} \xi \partial_\xi \psiI + \frac{3}{2} \psiI + b_{12}^{01} e^{\tau/2} \Deff^{-1/2} \partial_\xi \psiII, \\
e^{\tau/2} \psiII &= \Deff^{-1/2} \partial_\xi \psiI. 
\end{align*}
Note that if we solve the second equation for $\psiII$ and insert this into the first equation, we obtain the diffusive equation
\begin{align}
\partial_\tau \psiI = \partial_\xi^2 \psiI + \frac{1}{2} \xi \partial_\xi \psiI+ \frac{3}{2} \psi,
\end{align}
using that $(b_{11}^{02} + b_{12}^{01}) \Deff^{-1} = 1$. 

To precisely separate this leading order diffusive behavior from the higher order corrections, we make the ansatz
\begin{align}
\psiI (\tau, \xi) &= e^{\tau/2} \psiI_0 (\xi) + \psiI_1 (\xi), \label{e: lin ind psiI ansatz}\\
\psiII(\tau, \xi) &= \psiII_0(\xi) + e^{-\tau/2} \psiII_1 (\xi) + e^{-\tau} \psiII_2 (\xi), \label{e: lin ind psiII ansatz} \\
\psih(\tau, \xi) &= e^{-\tau/2} \psih_0 (\xi) + e^{-\tau} \psih_1 (\xi). \label{e: lin ind psih ansatz}
\end{align}
To capture the absorption effect in the wake of the front, we again consider the resulting equations for $\psi^{\mathrm{I}/\mathrm{II}/\mathrm{h}}_j$ only for $\xi > 0$, with homogeneous Dirichlet boundary condition at $\xi = 0$. Inserting this ansatz into \eqref{e: lin ind psi I eqn}-\eqref{e: lin ind psi h eqn} and first collecting only the leading order terms in $e^{\tau/2}$, we obtain the system
\begin{align}
0 &= b_{11}^{02} \Deff^{-1} \partial_\xi^2  \psiI_0 + \frac{1}{2} \xi \partial_\xi \psiI_0 + \psiI_0 + b_{12}^{01} \Deff^{-1/2} \partial_\xi  \psiII_0, \\
\psiII_0 &= \Deff^{-1/2} \partial_\xi  \psiI_0, \label{e: lin ind psi II 0 eqn}\\
b_{33}^{00} \psih_0 &= e^{-\tau/2} \tfh_0 (e^{\tau/2} \psiI_0 (\xi), \psiII_0 (\xi), \xi, \tau) =: \Gh_0(\xi), \label{e: lin ind psi h 0 eqn}
\end{align}
where we have noted from the expression \eqref{e: tilde F h 0 expression} that $\tfh_0$ does not depend on $\psih$, and that the right hand side of \eqref{e: lin ind psi h 0 eqn} is in fact independent of $\tau$. As suggested in the heuristic argument above, we insert the second equation into the first, and find that $\psiI_0$ solves
\begin{align}
0 = L_\Delta \psiI_0 =: \left(\partial_\xi^2 + \frac{1}{2} \xi \partial_\xi + 1 \right) \psiI_0, \label{e: lin ind psiI0 eqn},
\end{align}
using the fact that $\Deff = b_{11}^{02} + b_{12}^{01}$. As in Section \ref{s: colinear}, the unique solution to this equation, with boundary condition $\psiI_0(0) = 0$, is
\begin{align}
\psiI_0 (\xi) = \beta_0 \xi e^{-\xi^2/4},
\end{align}
up to the arbitrary factor $\beta_0 \in \R$. 
Hence from \eqref{e: lin ind psi II 0 eqn}, we conclude 
\begin{align}
\psiII_0(\xi) = \Deff^{-1/2} \partial_\xi \psi^I_0 (\xi) = \beta_0 \Deff^{-1/2} \partial_\xi \left( \xi e^{-\xi^2/4} \right). \label{e: lin ind psi II 0 expression}
\end{align}
Also since $b_{33}^{00}$ is invertible by Lemma \ref{l: lin ind normal form}, we can now solve \eqref{e: lin ind psi h 0 eqn} for $\psih_0$, obtaining 
\begin{align}
\psih_0 (\xi) = (b_{33}^{00})^{-1} \Gh_0 (\xi),
\end{align}
which satisfies the estimate
\begin{align}
| \partial_\xi^k \psih_0(\xi) | \leq C e^{-\xi^2/6} \label{e: lin ind psi h 0 estimate}
\end{align}
for $k = 0, 1, 2$. 

Having solved for $\psiI_0, \psiII_0$, and $\psih_0$, we now collect the next order terms resulting from inserting our ansatz into \eqref{e: lin ind psi I eqn}-\eqref{e: lin ind psi h eqn}, and thereby obtain the following equations for $\psiI_1, \psiII_1,$ and $\psih_1$: 
\begin{align}
\left(b_{11}^{02} \Deff^{-1} \partial_\xi^2 + \frac{1}{2} \xi \partial_\xi + \frac{3}{2} \right) \psiI_1 &= -b_{12}^{01}  \Deff^{-1/2} \partial_\xi \psiII_1  - \tfi_1 \left(e^{\tau/2 }\psiI_0, \psiII_0,  e^{-\tau/2} \psih_0, \xi, \tau \right) \label{e: lin ind psi I 1 eqn} \\
e^{\tau/2} \psiII_1 &= e^{\tau/2} \Deff^{-1/2} \partial_\xi \psiI_1 + \tfii_1 \left( e^{\tau/2 }\psiI_0, \psiII_0,  e^{-\tau/2} \psih_0, \xi, \tau \right), \\
e^\tau b_{33}^{00} \psih_1 &= \tfh_1 \left( e^{\tau/2 }\psiI_0, \psiII_0,  e^{-\tau/2} \psih_0, \xi, \tau \right) \label{e: lin ind psi h 1 eqn}
\end{align}
Solving the second equation for $\psiII_1$ in terms of $\psiI_1$ and $\psi^{\mathrm{I}/\mathrm{II}/\mathrm{h}}_0$ and inserting into the first equation, we obtain 
\begin{align}
\left( L_\Delta + \frac{1}{2} \right) \psiI_1 = - b_{12}^{01} \Deff^{-1/2} e^{-\tau/2} \partial_\xi \tfii_1 \left( e^{\tau/2 }\psiI_0, \psiII_0,  e^{-\tau/2} \psih_0, \xi, \tau \right) - \tfi_1 \left(e^{\tau/2 }\psiI_0, \psiII_0,  e^{-\tau/2} \psih_0, \xi, \tau \right). \label{e: lin ind psiI 1 eqn}
\end{align}
Note that the right hand side now only depends on $\psiI_0, \psiII_0$, and $\psih_0$, which we have already chosen. Examining the formulas \eqref{e: tf ii 1 expression} and \eqref{e: tfi 1 expression} for $\tfii_1$ and $\tfi_1$, we see that the right hand side
\begin{align}
\GI_1 (\xi) := - b_{12}^{01} \Deff^{-1/2} e^{-\tau/2} \partial_\xi \tfii_1 \left( e^{\tau/2 }\psiI_0, \psiII_0,  e^{-\tau/2} \psih_0, \xi, \tau \right) - \tfi_1 \left(e^{\tau/2 }\psiI_0 (\xi), \psiII_0 (\xi),  e^{-\tau/2} \psih_0 (\xi), \xi, \tau \right)
\end{align}
is independent of $\tau$. As pointed out in Section \ref{s: colinear diffusive tail}, $\lambda = -\frac{1}{2}$ is not in the spectrum of $L_\Delta$ on $L^2_\mathrm{odd} (\R_+)$, and so we can invert $(L_\Delta + \frac{1}{2})$ to solve \eqref{e: lin ind psi I 1 eqn} for $\psiI_1(\xi)$. We can then solve for $\psiII_1$ and $\psih_1$ as
\begin{align}
\psiII_1 (\xi) &= \GII_1 (\xi), \label{e: lin ind psi I 0 expression}\\
\psih_1 (\xi) &= (b_{33}^{00})^{-1} \Gh_1(\xi), \label{e: lin ind psi h 0 expression}
\end{align}
where 
\begin{align}
\GII_1 (\xi) := \Deff^{-1/2} \partial_\xi \psiI_1 (\xi) + e^{-\tau/2} \tfii_1 \left( e^{\tau/2} \psiI_0 (\xi), \psiII_0 (\xi), e^{-\tau/2} \psih_0 (\xi), \xi, \tau) \right)
\end{align}
and
\begin{align}
\Gh_1(\xi) = e^{-\tau} \tfh_1 \left( e^{\tau/2 }\psiI_0 (\xi), \psiII_0 (\xi),  e^{-\tau/2} \psih_0 (\xi), \xi, \tau \right)
\end{align}
are in fact independent of $\tau$ by the formulas \eqref{e: tf ii 1 expression}, \eqref{e: tfh 1 expression}. 

Exploiting the relationship of $L_\Delta$ to the quantum harmonic oscillator as in Section \ref{s: colinear diffusive tail}, we obtain the following estimates on $\psi^\mathrm{I/II/h}_1$. 

\begin{lemma}\label{l: lin ind psi 1 estimates}
	Let $\psiI_0, \psiII_0$, and $\psih_0$ given by \eqref{e: lin ind psi I 0 expression}, \eqref{e: lin ind psi II 0 expression}, and \eqref{e: lin ind psi h 0 expression}, respectively. Then there exist smooth functions $\psi^{\mathrm{I}/\mathrm{II}/\mathrm{h}}$, defined for $\xi \geq 0$, which solve \eqref{e: lin ind psi I 1 eqn}-\eqref{e: lin ind psi h 1 eqn} and satisfy the estimates
	\begin{align}
	| \partial_\xi^k \psiI_1 | &\leq C_k e^{-\xi^2/6}, \\
	| \partial_\xi^k\psiII_1 | & \leq C_k e^{-\xi^2/6}, \\
	| \partial_\xi^k\psih_1 | &\leq C_k e^{-\xi^2/6}
	\end{align}
	for all $\xi \geq 0$ and all natural numbers $k$. 
\end{lemma}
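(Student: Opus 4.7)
The plan is to build $\psi^{\mathrm{I}}_1$, $\psi^{\mathrm{II}}_1$, and $\psi^{\mathrm{h}}_1$ in that order, since the latter two are essentially determined by $\psi^{\mathrm{I}}_1$ (and by the already-constructed $\psi^{\mathrm{I/II/h}}_0$) through an explicit algebraic relation. The core of the proof is therefore solving the single scalar equation \eqref{e: lin ind psiI 1 eqn} for $\psi^{\mathrm{I}}_1$ with the required Gaussian decay; the remainder is bookkeeping.

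First, I would verify that the forcing $\mathcal{G}^{\mathrm{I}}_1(\xi)$ in \eqref{e: lin ind psiI 1 eqn} has Gaussian decay. By the explicit formulas for $\tilde{\mathcal{F}}^{\mathrm{I}}_1$ and $\tilde{\mathcal{F}}^{\mathrm{II}}_1$ (which are referenced in Appendix \ref{app: lin ind self sim}), $\mathcal{G}^{\mathrm{I}}_1$ is a linear combination of $\psi^{\mathrm{I}}_0$, $\psi^{\mathrm{II}}_0$, $\psi^{\mathrm{h}}_0$, and their first and second $\xi$-derivatives, multiplied by $\tau$-independent coefficients (the $e^{\tau/2}$, $e^{-\tau/2}$ prefactors in the ansatz cancel against the $e^{\pm\tau/2}$ factors in the $\tilde{\mathcal{F}}_j$'s, as pointed out in the excerpt). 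Since $\psi^{\mathrm{I}}_0$ and $\psi^{\mathrm{II}}_0 = D_{\mathrm{eff}}^{-1/2}\partial_\xi \psi^{\mathrm{I}}_0$ are Schwartz with decay $\xi^k e^{-\xi^2/4}$, and $\psi^{\mathrm{h}}_0$ obeys \eqref{e: lin ind psi h 0 estimate}, widening the Gaussian variance slightly to absorb the polynomial prefactors yields $|\partial_\xi^k \mathcal{G}^{\mathrm{I}}_1(\xi)| \leq C_k e^{-\xi^2/7}$ for every $k$.

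Next, I would invert $L_\Delta + \tfrac{1}{2}$ on the half line $\xi > 0$ with a homogeneous Dirichlet boundary condition at $\xi = 0$, exactly as in Lemma \ref{l: colinear psiI 1}. The standard trick is to conjugate with the Gaussian weight $e^{\xi^2/8}$, which converts $L_\Delta$ into (an affine shift of) the quantum harmonic oscillator, whose spectrum on $L^2_{\mathrm{odd}}$ is $\{-n - \tfrac{1}{2} : n \geq 0 \text{ even}\}$, so that $-\tfrac{1}{2}$ is not an eigenvalue on the odd subspace. Hence $L_\Delta + \tfrac{1}{2}$ is invertible on the natural Gaussian-weighted space containing $\mathcal{G}^{\mathrm{I}}_1$, producing a smooth $\psi^{\mathrm{I}}_1(\xi)$ satisfying $|\psi^{\mathrm{I}}_1(\xi)| \leq C e^{-\xi^2/8}$. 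Derivative bounds $|\partial_\xi^k \psi^{\mathrm{I}}_1(\xi)| \leq C_k e^{-\xi^2/8}$ follow by bootstrapping: the equation expresses $\partial_\xi^2 \psi^{\mathrm{I}}_1$ in terms of $\psi^{\mathrm{I}}_1$, $\partial_\xi \psi^{\mathrm{I}}_1$, and $\mathcal{G}^{\mathrm{I}}_1$, all Gaussian, and differentiating the equation $k-2$ times closes the induction. Continuous extension of all derivatives to $\xi = 0$ follows from standard elliptic regularity up to the boundary.

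Having $\psi^{\mathrm{I}}_1$ in hand, I would set
\begin{equation*}
\psi^{\mathrm{II}}_1 = D_{\mathrm{eff}}^{-1/2} \partial_\xi \psi^{\mathrm{I}}_1 + e^{-\tau/2} \tilde{\mathcal{F}}^{\mathrm{II}}_1(e^{\tau/2}\psi^{\mathrm{I}}_0, \psi^{\mathrm{II}}_0, e^{-\tau/2}\psi^{\mathrm{h}}_0, \xi, \tau),
\end{equation*}
which is $\tau$-independent by construction. The first summand inherits a $e^{-\xi^2/8}$ bound from the previous step, and the second summand is Gaussian by the same argument used for $\mathcal{G}^{\mathrm{I}}_1$. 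Finally, since $b_{33}^{00}$ is invertible by Lemma \ref{l: lin ind normal form}, I would simply set $\psi^{\mathrm{h}}_1 = (b_{33}^{00})^{-1} \mathcal{G}^{\mathrm{h}}_1$ and read off its Gaussian bounds directly from those of $\psi^{\mathrm{I/II/h}}_0$. The main (and only nontrivial) obstacle is the spectral claim that $-\tfrac{1}{2} \notin \sigma(L_\Delta)$ on the odd subspace; this is where the harmonic oscillator conjugation does the essential work and parallels the scalar argument of \cite[Lemma 2.3]{CAMS}. Everything else is Gaussian calculus.
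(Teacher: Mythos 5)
Your proposal is correct and follows essentially the same route as the paper: substitute the $\psiII_1$ relation into the $\psiI_1$ equation (using $b_{11}^{02} + b_{12}^{01} = D_\mathrm{eff}$) to reduce to $(L_\Delta + \tfrac12)\psiI_1 = \GI_1$, invert on the odd subspace via the Gaussian/harmonic-oscillator conjugation as in Lemma~\ref{l: colinear psiI 1}, and then recover $\psiII_1$ and $\psih_1$ algebraically using invertibility of $b_{33}^{00}$, with Gaussian bounds coming from widening the variance to absorb polynomial factors. One small slip worth fixing: the spectrum of $L_\Delta = \partial_\xi^2 + \tfrac12\xi\partial_\xi + 1$ on $L^2_\mathrm{odd}(\R)$ is $\{0,-1,-2,\dots\}$ (the odd Hermite modes of the conjugated oscillator), not $\{-n-\tfrac12:\ n\geq 0\ \text{even}\}$ as you wrote --- that set actually contains $-\tfrac12$, contradicting the resolvent-set conclusion you then correctly draw. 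The correct fact, as stated in Section~\ref{s: colinear diffusive tail}, is that $-\tfrac12$ is an eigenvalue on $L^2(\R)$ with an \emph{even} eigenfunction and so drops out when restricting to odd functions (equivalently, imposing the Dirichlet condition at $\xi=0$).
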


We must include one more term in the expansion for $\psiII$, since we have not yet eliminated the leading order terms in $\tfii_2$, which are $\mathrm{O}(1)$ in $e^{\tau/2}$. Collecting all $\mathrm{O}(1)$ terms in the equation for $\psiII$ after inserting our ansatz, we find 
\begin{align}
\psiII_2 (\xi) = \tfii_1 (\psiI_1, e^{-\tau/2} \psiII_1, \xi, \tau) + \tfii_2 (e^{\tau/2} \psiI_0, \psiII_0, e^{-\tau/2} \psih_0, \xi, \tau). 
\end{align}
Note again that the right hand side is in fact independent of $\tau$. We record the following elementary estimates on $\psiII_2$.
\begin{lemma}\label{l: lin ind psiII 2 estimate}
	Let $\psi^\mathrm{I/II/h}_j, j = 1,2$ be chosen as in Lemma \ref{l: lin ind psi 1 estimates}. There exists a constant $C > 0$ such that 
	\begin{align}
	| \psiII_2 (\xi) | \leq C e^{-\xi^2 /6}. 
	\end{align}
\end{lemma}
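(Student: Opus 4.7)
The plan is to estimate $\psiII_2(\xi)$ by directly bounding the two contributions $\tfii_1(\psiI_1, e^{-\tau/2}\psiII_1, \xi, \tau)$ and $\tfii_2(e^{\tau/2}\psiI_0, \psiII_0, e^{-\tau/2}\psih_0, \xi, \tau)$ using the Gaussian pointwise bounds on the building blocks $\psi^\mathrm{I/II/h}_0, \psi^\mathrm{I/II/h}_1$ that have already been established. The key observation, noted right after the definition of $\psiII_2$, is that the factors $e^{\tau/2}$ and $e^{-\tau/2}$ are arranged so that $\psiII_2$ is in fact independent of $\tau$; in particular, every polynomial in $(\partial_\tau, \partial_\xi)$ arising from $\tilde b_{ij}$ contributes only powers of $\partial_\xi$ and multiplicative constants after these exponential prefactors are taken into account. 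Thus $\psiII_2(\xi)$ is a finite linear combination of terms of the form $p(\xi)\partial_\xi^k \psi^\mathrm{I/II/h}_j(\xi)$, where $p(\xi)$ is a polynomial in $\xi$ (coming from the scaling variables and the coefficients in $\tilde b_{ij}$) and $k$ is bounded by the order of the symbol.

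First I would fix the concrete form of $\psiII_2$ by substituting the explicit expressions for $\tfii_1$ and $\tfii_2$ from Appendix \ref{app: lin ind self sim} and simplifying the $\tau$-dependence. Because $\tilde b_{21}(\lambda,\nu) = b_{21}^{10}\lambda + b_{21}^{02}\nu^2 + \mathrm{O}(\lambda\nu)$ and $\tilde b_{22}(\lambda,\nu)$, $\tilde b_{23}(\lambda,\nu)$ each vanish at $(0,0)$ with the same vanishing properties used in Lemma \ref{l: lin ind normal form}, every summand inherits a factor of either $\partial_\tau$ (which becomes multiplication by a $\xi$-polynomial times $\partial_\tau - \tfrac12\xi\partial_\xi$ under the self-similar change of variables) or $\partial_\xi^2$ acting on the explicit Gaussians and their derivatives. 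Since $\psiI_0(\xi)=\beta_0\xi e^{-\xi^2/4}$, its derivatives $\partial_\xi^k \psiI_0$, and by Lemma \ref{l: lin ind psi 1 estimates} the functions $\psi^\mathrm{I/II/h}_1$, all satisfy Gaussian bounds of the form $|\partial_\xi^k \cdot| \leq C_k e^{-\xi^2/6}$, and $\psih_0$ satisfies the same bound by \eqref{e: lin ind psi h 0 estimate} while $\psiII_0$ has a Gaussian bound by its explicit formula \eqref{e: lin ind psi II 0 expression}, each resulting summand admits such a pointwise bound.

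The final step is to absorb any polynomial factors in $\xi$ into a slightly wider Gaussian. Since $p(\xi)e^{-\xi^2/4}$ is bounded by $C\, e^{-\xi^2/6}$ for all $\xi \in \R$ whenever $p$ is a polynomial, and the same holds with $e^{-\xi^2/6}$ replaced on the right by $e^{-\xi^2/(6+\varepsilon)}$ for any small widening, combining finitely many such estimates yields the desired pointwise bound $|\psiII_2(\xi)| \leq C e^{-\xi^2/6}$. I do not expect any genuine obstacle here: the result is of the same flavor as Lemmas \ref{l: colinear psih 0 estimates}, \ref{l: colinear psih 1}, and the estimates in Lemma \ref{l: lin ind psi 1 estimates}, and follows from a finite bookkeeping of derivatives combined with the standard trick of widening the Gaussian variance to absorb polynomial prefactors. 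The only mild care required is to verify that the $\tau$-dependence cancels as claimed so that $\psiII_2$ genuinely depends only on $\xi$; this is a direct check against the explicit expressions \eqref{e: tf ii 1 expression} and the corresponding formula for $\tfii_2$ in the appendix.
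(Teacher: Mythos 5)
Your proof is correct and follows the same elementary route the paper intends for this lemma, which it states without proof: substitute the explicit expressions for $\tilde{\mathcal{F}}^{\mathrm{II}}_1$ and $\tilde{\mathcal{F}}^{\mathrm{II}}_2$, observe that the $e^{\pm\tau/2}$ prefactors cancel so that every summand is a polynomial-in-$\xi$ times a derivative of one of the already-constructed profiles, and absorb the polynomial into a slightly wider Gaussian. One small point worth being explicit about: the term $\tfrac{1}{2}b_{21}^{10}\,\xi\,\partial_\xi\psiI_1$ produces a polynomial factor multiplying a function for which Lemma \ref{l: lin ind psi 1 estimates} states the \emph{same} rate $e^{-\xi^2/6}$ as the target, so to close the absorption step you need to note that the rate in that lemma is not tight (its construction from $\psi^{\mathrm{I}}_0$ at rate $e^{-\xi^2/4}$ and $\psi^{\mathrm{h}}_0$, which actually satisfies the stronger bound analogous to Lemma \ref{l: colinear psih 0 estimates}, gives strictly better than $e^{-\xi^2/6}$), or alternatively restate \ref{l: lin ind psi 1 estimates} with a slightly wider Gaussian.
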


Having now chosen all the terms in the ansatz \eqref{e: lin ind psiI ansatz}-\eqref{e: lin ind psih ansatz}, we are ready to estimate the residual error resulting from this approximate solution. To do this, we define for $\Psi = (\psiI, \psiII, \psih)$, 
\begin{align}
\mcf_\mathrm{res} [\Psi] = \begin{pmatrix}
\partial_\tau - b_{11}^{02} \Deff^{-1} \partial_\xi^2 \psiI - \frac{1}{2} \xi \partial_\xi \psiI - \frac{3}{2} \psiI - b_{12}^{01} e^{\tau/2} \Deff^{-1/2} \partial_\xi \psiII - \tfi_1 (\psiI, \psiII, \psih) - \tfi_2 (\psiI, \psiII, \psih) \\
e^\tau \psiII- e^{\tau/2} \Deff^{-1/2} \partial_\xi \psiI - \tfii_1 (\psiI, \psiII, \psih) - \tfii_2 (\psiI, \psiII, \psih) - \tfii_3 (\psiI, \psiII, \psih) \\
e^\tau b_{33}^{00} \psih - \tfh_0 (\psiI, \psiII, \psih) - \tfh_1 (\psiI, \psiII, \psih) - \tfh_2 (\psiI, \psiII, \psih),
\end{pmatrix}
\end{align}
so that $\Psi$ solves \eqref{e: lin ind psi I eqn}-\eqref{e: lin ind psi h eqn} if and only if $\mcf_\mathrm{res}[\Psi] = 0$. Summarizing the results of this section and estimating the nonlinear terms as in Section \ref{s: colinear}, we arrive at the following estimate on the residual error. 

\begin{prop}[Diffusive tail in self-similar coordinates --- linearly independent case]\label{p: lin ind diffusive tail}
	Fix $\beta_0 \in \R, y_0 \in \R$, and $0 < \mu < \frac{1}{8}$, and let $\psi^\mathrm{I/II/h}_j$ be defined as above. Let
	\begin{align}
	\Psi(\xi, \tau) = \begin{pmatrix}
	e^{\tau/2} \psiI_0 (\xi) + \psiI_1 (\xi) \\ 
	\psiII_0 (\xi) + e^{-\tau/2} \psiII_1(\xi) + e^{-\tau} \psiII_2 (\xi), \\
	e^{-\tau/2} \psih_0 (\xi) + e^{-\tau} \psih_1(\xi). 
	\end{pmatrix}
	\end{align}
	There exists a constant $C > 0$ such that
	\begin{align}
	\left| \mcf_\mathrm{res} [\Psi] (\xi, \tau) \right| \leq C e^{-\tau/2} e^{-\xi^2/8}
	\end{align}
	for all $\tau > 0$ and all
	\begin{align}
	\xi \geq \frac{1}{\sqrt{\Deff}} \left( e^{(\mu - 1/2)\tau} + y_0 e^{-\tau/2} \right). 
	\end{align}
\end{prop}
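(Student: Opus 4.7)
The plan is to follow the structure of Corollary 3.5 (or rather, the proposition preceding Corollary 3.6) from the co-linear case, adapted to the three-block system \eqref{e: lin ind psi I eqn}--\eqref{e: lin ind psi h eqn}. The key observation is that the ansatz \eqref{e: lin ind psiI ansatz}--\eqref{e: lin ind psih ansatz} has been \emph{designed} precisely so that plugging it into $\mcf_\mathrm{res}[\Psi]$ makes the two highest orders in $e^{\tau/2}$ cancel in each component. What remains must then be estimated by combining Gaussian bounds on $\psi^{\mathrm{I}/\mathrm{II}/\mathrm{h}}_j$ with an analogue of Lemma \ref{l: colinear nonlinearity estimates} for the nonlinear terms.

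First, I would substitute $\Psi$ into each of the three components of $\mcf_\mathrm{res}[\Psi]$ and expand in powers of $e^{\tau/2}$. In the first component, the $e^{\tau/2}$-order terms vanish by \eqref{e: lin ind psiI0 eqn} together with \eqref{e: lin ind psi II 0 expression}, and the $\mathrm{O}(1)$ terms vanish by the construction of $\psiI_1$ in \eqref{e: lin ind psi I 1 eqn}. In the second component, the $e^{\tau}$- and $e^{\tau/2}$-order terms vanish by the choices of $\psiII_0$ and $\psiII_1$ via \eqref{e: lin ind psi II 0 expression} and the definition of $\GII_1$, and the $\mathrm{O}(1)$ terms vanish by the definition of $\psiII_2$. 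In the third component, the $e^\tau$- and $e^{\tau/2}$-order terms vanish by the choices \eqref{e: lin ind psi h 0 eqn} and \eqref{e: lin ind psi h 1 eqn} of $\psih_0$ and $\psih_1$. After these cancellations, only linear terms of order $e^{-\tau/2}$ or smaller remain, multiplied by derivatives of the $\psi^{\mathrm{I}/\mathrm{II}/\mathrm{h}}_j$.

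Next, I would estimate these residual linear terms. Each of $\psiI_0(\xi) = \beta_0 \xi e^{-\xi^2/4}$, $\psiII_0$, $\psih_0$, $\psiI_1$, $\psiII_1$, $\psih_1$, $\psiII_2$ decays like $e^{-\xi^2/6}$ together with all its derivatives, by Lemmas \ref{l: lin ind psi 1 estimates}, \ref{l: lin ind psiII 2 estimate}, the expression \eqref{e: lin ind psi II 0 expression}, and the estimate \eqref{e: lin ind psi h 0 estimate}. Every remaining term in $\mcf_\mathrm{res}[\Psi]$ is either a spatial derivative or a polynomial-in-$\xi$ multiple of one of these functions, times a factor of $e^{-\tau/2}$ or better (these factors arise either explicitly from $\tilde{\mathcal{F}}^\mathrm{I/II/h}_j$ through terms like $e^{-\tau/2} \partial_\xi$, or from the fact that the highest-order cancellations have already happened). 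Polynomial prefactors in $\xi$ can be absorbed by widening the Gaussian, yielding an overall bound of the form $C e^{-\tau/2} e^{-\xi^2/8}$.

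Finally, the nonlinear contributions $e^\tau \tilde N^{\mathrm{I}/\mathrm{II}/\mathrm{h}}$ hidden inside $\tfi_2, \tfii_3, \tfh_2$ are handled by a verbatim repetition of Lemma \ref{l: colinear nonlinearity estimates}: the restriction $\xi \geq \Deff^{-1/2}(e^{(\mu-1/2)\tau} + y_0 e^{-\tau/2})$ guarantees $y(\xi,\tau) \geq e^{\mu\tau}$, so $e^{\eta_* y}N(e^{-\eta_* y} Q\Psi)$ is super-exponentially small in $\tau$, and the factor $e^\tau$ is absorbed trivially. Combining the linear and nonlinear bounds yields the claimed estimate. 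The only place requiring genuine care --- and where I expect the main obstacle to lie --- is the bookkeeping of orders in the first and second components, since $\tfi_1$, $\tfii_1$, and $\tfii_2$ contain terms with mixed powers of $e^{\pm \tau/2}$ and spatial derivatives; one must check that every such term either has been eliminated by the construction of some $\psi^{\mathrm{I}/\mathrm{II}/\mathrm{h}}_j$ or carries at least one net factor of $e^{-\tau/2}$, for which the explicit formulas in Appendix \ref{app: lin ind self sim} are essential.
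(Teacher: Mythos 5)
Your proof is correct and follows the paper's own (largely implicit) approach: plug the ansatz into $\mcf_\mathrm{res}[\Psi]$, verify that the highest orders in $e^{\tau/2}$ cancel by the construction of the $\psi^{\mathrm{I}/\mathrm{II}/\mathrm{h}}_j$, and bound what remains via the Gaussian estimates and the nonlinearity estimate of Lemma \ref{l: colinear nonlinearity estimates}. One cosmetic slip worth noting: in the third component the two orders that cancel are $e^{\tau/2}$ and $\mathrm{O}(1)$, not $e^\tau$ and $e^{\tau/2}$, since $e^\tau b_{33}^{00}\psih$ with $\psih \sim e^{-\tau/2}\psih_0 + e^{-\tau}\psih_1$ is $\mathrm{O}(e^{\tau/2}) + \mathrm{O}(1)$ and $\tfh_0$ at the zeroth-order ansatz is also $\mathrm{O}(e^{\tau/2})$; the argument is unaffected since the key point, that the leading two orders cancel by construction of $\psih_0$ and $\psih_1$, still holds.
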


Reverting back to the coordinates $(y,t)$, we obtain the following estimates on the approximate solution, recalling that we gain a factor of $(t+T)^{-1}$ since we multiplied by $e^\tau$ in deriving the system \eqref{e: lin ind psi I eqn}-\eqref{e: lin ind psi h eqn}. 

\begin{corollary}[Diffusive tail in normal form coordinates --- linearly independent case]\label{c: lin ind diffusive tail normal form}
	Fix $\beta_0 \in \R, y_0 \in \R,$ and $0 < \mu < \frac{1}{8}$. With $\Psi = (\psiI, \psiII, \psih)$ as in Proposition \ref{p: lin ind diffusive tail}, define 
	\begin{align}
	\phiIp (y,t) &= (t+T)^{1/2} \psiI_0 \left( \xi(y,t)  \right) + \psiI_1 \left( \xi(y,t) \right), \\
	\phiIIp (y,t) &= \psiII_0 \left( \xi(y,t)  \right) + (t+T)^{-1/2} \psiII_1 \left( \xi(y,t)  \right)  + (t+T)^{-1} \psiII_2 \left( \xi(y,t) \right)  \\
	\phihp(y,t) &= (t+T)^{-1/2} \psih_0 \left( \xi(y,t) \right)  + (t+T)^{-1} \psih_1\left( \xi(y,t) \right),
	\end{align}
	where 
	\begin{align}
	\xi(y,t) =  \frac{1}{\sqrt{\Deff}} \frac{y+y_0}{\sqrt{t+T}}. 
	\end{align}
	Then with $\Phi^+ = (\phiIp, \phiIIp, \phihp)$ and $\tilde{F}^+_\mathrm{res} [\Phi]$ defined as in \eqref{e: colinear Phi eqn}, with $Q$, $S$, and $B(\partial_t, \partial_y)$ as in Lemma \ref{l: lin ind normal form}, there exists a constant $C > 0$ such that for all $t > 0$ and all $y \geq (t+T)^\mu$, we have 
	\begin{align}
	| \tilde{F}^+_\mathrm{res} [\Phi^+](y,t)| \leq \frac{C}{(t+T)^{3/2}} e^{-(y+y_0)^2/[8 \Deff (t+T)]}. 
	\end{align}
\end{corollary}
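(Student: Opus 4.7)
The strategy is identical in spirit to that of Corollary \ref{c: colinear normal form residual} in the co-linear case: the corollary is obtained by undoing the two transformations that were used to pass from the $(y,t)$ formulation \eqref{e: lin ind phiI eqn}--\eqref{e: lin ind phih eqn} to the self-similar formulation \eqref{e: lin ind psi I eqn}--\eqref{e: lin ind psi h eqn}, namely the rescaling $(y,t)\mapsto (\xi,\tau)$ and the overall multiplication by $e^\tau$.

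First I verify that the functions $(\phiIp,\phiIIp,\phihp)(y,t)$ as defined in the statement correspond exactly to the components of $\Psi(\xi,\tau)$ from Proposition \ref{p: lin ind diffusive tail} under the change of variables $\tau=\log(t+T)$, $\xi = \Deff^{-1/2}(y+y_0)/\sqrt{t+T}$, where the prefactors $(t+T)^{k/2}$ exactly match the factors $e^{k\tau/2}$ built into the ansatz \eqref{e: lin ind psiI ansatz}--\eqref{e: lin ind psih ansatz}. Next, a routine chain-rule computation, using $\partial_t\tau = e^{-\tau}$ and $\partial_t\xi = -\tfrac{1}{2}\xi e^{-\tau}$, confirms that the derivative operators in the two coordinate systems are related by $\partial_t = e^{-\tau}(\partial_\tau - \tfrac{1}{2}\xi\partial_\xi)$ and $\partial_y = \Deff^{-1/2}e^{-\tau/2}\partial_\xi$, which is precisely the transformation used to derive \eqref{e: lin ind psi I eqn}--\eqref{e: lin ind psi h eqn} from \eqref{e: lin ind phiI eqn}--\eqref{e: lin ind phih eqn} after multiplying through by $e^\tau$. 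Consequently
\begin{align*}
\tilde{F}^+_\mathrm{res}[\Phi^+](y,t) = e^{-\tau}\,\mcf_\mathrm{res}[\Psi](\xi(y,t),\tau(t)).
\end{align*}

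Finally, I check that the region $y\geq (t+T)^\mu$ translates into the region $\xi\geq \Deff^{-1/2}(e^{(\mu-1/2)\tau}+y_0 e^{-\tau/2})$ on which Proposition \ref{p: lin ind diffusive tail} gives the bound $|\mcf_\mathrm{res}[\Psi]|\leq C e^{-\tau/2}e^{-\xi^2/8}$. Substituting back and using $e^{-\tau/2}=(t+T)^{-1/2}$, $e^{-\xi^2/8}=\exp[-(y+y_0)^2/(8\Deff(t+T))]$, and the extra factor $e^{-\tau}=(t+T)^{-1}$ from above, I obtain the claimed estimate with total prefactor $(t+T)^{-3/2}$.

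The only non-cosmetic point is the bookkeeping of the $e^\tau$ factor that was introduced in the derivation of \eqref{e: lin ind psi I eqn}--\eqref{e: lin ind psi h eqn}; the text explicitly flags this (``we have multiplied through by $e^\tau$''), so it is straightforward to track. Beyond that the proof is a direct change-of-variables argument with no genuine obstacle, and it is essentially the same argument given in Corollary \ref{c: colinear normal form residual}.
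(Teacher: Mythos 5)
Your proposal is correct and follows exactly the same argument the paper uses: undo the self-similar change of variables and account for the factor $e^{\tau}$ introduced in passing to \eqref{e: lin ind psi I eqn}--\eqref{e: lin ind psi h eqn}, picking up the extra $(t+T)^{-1}$ relative to Proposition \ref{p: lin ind diffusive tail}. The paper simply states this without elaboration (as it did for Corollary \ref{c: colinear normal form residual}); you have filled in the same bookkeeping.
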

Undoing the normal form transformation by considering $v = Q \Phi$, we finally obtain the following estimates on the residual error in the original coordinates.
\begin{corollary}\label{c: lin ind diffusive tail original coordinates}
	Fix $\beta_0 \in \R, y_0 \in \R,$ and $0 < \mu < \frac{1}{8}$. Define $v^+(y,t) = Q \Phi^+(y,t)$, with $\Phi^+$ chosen as in Corollary \ref{c: lin ind diffusive tail normal form}. Let $F^+_\mathrm{res} [v]$ be defined by \eqref{e: v eqn pencil form}. There exists a constant $C > 0$ such that for $t > 0$ and $y \geq (t+T)^\mu$, we have 
	\begin{align}
	| F^+_\mathrm{res} [v^+](y,t) | \leq \frac{C}{(t+T)^{3/2}} e^{-(y+y_0)^2/[8 \Deff (t+T)]}. 
	\end{align}
\end{corollary}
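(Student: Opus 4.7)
The plan is to observe that the statement is essentially a cosmetic translation of Corollary \ref{c: lin ind diffusive tail normal form} from the normal-form coordinates back to the original coordinates, exactly parallel to the passage from Corollary \ref{c: colinear normal form residual} to Corollary \ref{c: colinear vplus residual} in the co-linear case.

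First I would unpack the definitions to verify the algebraic identity $F^+_\mathrm{res}[v] = S^{-1} \tilde F^+_\mathrm{res}[\Phi]$ whenever $v = Q\Phi$. Concretely, recall that $\tilde F^+_\mathrm{res}[\Phi]$ is obtained from the equation for $v$ in \eqref{e: v eqn pencil form} by substituting $v = Q\Phi$ and multiplying through on the left by $S$; each of the four constituents of $\tilde F^+_\mathrm{res}$ mirrors the corresponding constituent of $F^+_\mathrm{res}$, with $-B(\partial_t,\partial_y)\Phi = -SA(\partial_t,\partial_y)Q\Phi = -SA(\partial_t,\partial_y)v$, the two explicit linear terms are $SQ\Phi = Sv$ and $SQ\Phi_y = Sv_y$, and the nonlinearity $Se^{\eta_* y}N(e^{-\eta_* y}Q\Phi) = S\,e^{\eta_* y}N(e^{-\eta_* y}v)$. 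Factoring out $S$ on the left then yields $\tilde F^+_\mathrm{res}[\Phi] = S\,F^+_\mathrm{res}[v]$, and hence $F^+_\mathrm{res}[v^+] = S^{-1}\tilde F^+_\mathrm{res}[\Phi^+]$ for our specific approximate solution.

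With this identity in hand, the desired pointwise bound is immediate: since $S \in \R^{n\times n}$ is a fixed invertible matrix, $|S^{-1}|$ is a finite constant, and so applying $S^{-1}$ to the bound
\[
|\tilde F^+_\mathrm{res}[\Phi^+](y,t)| \leq \frac{C}{(t+T)^{3/2}} e^{-(y+y_0)^2/[8\Deff(t+T)]}
\]
from Corollary \ref{c: lin ind diffusive tail normal form} (valid for all $t>0$ and $y \geq (t+T)^\mu$) gives the same pointwise estimate for $|F^+_\mathrm{res}[v^+](y,t)|$, with a possibly enlarged constant $C$ absorbing $|S^{-1}|$.

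There is no real obstacle here; the only thing to be careful about is verifying that the algebraic conversion $\tilde F^+_\mathrm{res}[\Phi] = S F^+_\mathrm{res}[v]$ is truly an equality (not just leading-order) and that it holds termwise, which it does since $Q$ and $S$ are constant matrices commuting with $\partial_t$ and $\partial_y$. The argument is identical in structure to the co-linear case, and notably does not require reopening any of the self-similar analysis or the derivative counting already handled in the construction of $\Phi^+$.
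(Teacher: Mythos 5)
Your proposal is correct and matches the paper's approach exactly: the paper handles this via the identity $F^+_\mathrm{res}[v] = S^{-1}\tilde F^+_\mathrm{res}[\Phi]$ for $v = Q\Phi$ and then invokes Corollary \ref{c: lin ind diffusive tail normal form}, just as in the co-linear analogue (Corollary \ref{c: colinear vplus residual}). Your extra care in verifying the conversion termwise (using that $S,Q$ are constant and commute with $\partial_t,\partial_y$) is a fine sanity check, though the paper treats it as immediate.
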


\subsection{Matching the diffusive tail to the front}
As in Section \ref{s: colinear}, we define $v^-(y) = \omega(y) q_*(y)$, and we will match this with the diffusive tail $v^+(y,t)$ on the intermediate length scale $y \sim (t+T)^\mu$. We first measure the error made by matching exactly at $y = (t+T)^\mu$.

\begin{lemma}[Pointwise matching --- $u_0, u_1$ linearly independent]\label{l: lin ind pointwise matching}
	Let $\beta_0 = \sqrt{\Deff}$ and $y_0 = a$, and fix $0 < \mu < \frac{1}{8}$. Assume $u_0$ and $u_1$ are linearly independent. The conclusions of Lemma \ref{l: colinear pointwise matching} remain valid in this case, with $v^+(y,t)$ constructed as in Section \ref{s: lin ind diffusive tail}. 
\end{lemma}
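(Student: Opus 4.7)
The plan is to adapt the argument of Lemma \ref{l: colinear pointwise matching}, using the modified decomposition of the approximate solution when $u_0$ and $u_1$ are linearly independent. Since $v^+ = Q\Phi^+$ with $\Phi^+ = (\phiIp, \phiIIp, \phihp)^T$, and Lemma \ref{l: lin ind normal form} gives $Q e_0 = u_0$ and $Q e_1 = u_1$, we have the splitting
\[
v^+(y,t) = u_0\, \phiIp(y,t) \;+\; u_1\, \phiIIp(y,t) \;+\; Q\bigl(0,\,0,\,\phihp(y,t)\bigr)^T.
\]
By Lemma \ref{l: lin ind psi 1 estimates} and the estimate \eqref{e: lin ind psi h 0 estimate}, together with the definition of $\phihp$ in Corollary \ref{c: lin ind diffusive tail normal form}, the last term is $\mathrm{O}((t+T)^{-1/2})$ uniformly in $y$, so it is already absorbed into the target matching error. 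The key difference from the co-linear case is therefore that the constant $u_1$-contribution in the front tail is now matched by $u_1\, \phiIIp$ rather than by an $a$-dependent shift in $\phiIp$.

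First I would evaluate at $\xi_* := \xi((t+T)^\mu,t) = \Deff^{-1/2}\bigl((t+T)^{\mu-1/2} + y_0(t+T)^{-1/2}\bigr)$, which tends to zero as $t+T \to \infty$ since $\mu < 1/8$. Using $\psiI_0(\xi) = \beta_0 \xi e^{-\xi^2/4}$, Taylor expansion of the exponential, and $\psiI_1(0) = 0$ with smoothness from Lemma \ref{l: lin ind psi 1 estimates}, one obtains
\[
\phiIp((t+T)^\mu, t) = \beta_0 \Deff^{-1/2}\bigl[(t+T)^\mu + y_0\bigr] + \mathrm{O}((t+T)^{\mu - 1/2}).
\]
For $\phiIIp$, the relation \eqref{e: lin ind psi II 0 eqn} gives $\psiII_0(\xi) = \beta_0 \Deff^{-1/2}(1 - \xi^2/2) e^{-\xi^2/4}$, so $\psiII_0(\xi_*) = \beta_0 \Deff^{-1/2} + \mathrm{O}(\xi_*^2)$, and together with Lemmas \ref{l: lin ind psi 1 estimates}--\ref{l: lin ind psiII 2 estimate} this yields
\[
\phiIIp((t+T)^\mu, t) = \beta_0 \Deff^{-1/2} + \mathrm{O}((t+T)^{2\mu-1}) + \mathrm{O}((t+T)^{-1/2}).
\]
Choosing $\beta_0 = \sqrt{\Deff}$ cleans up the prefactors and gives
\[
v^+((t+T)^\mu, t) = u_0\bigl[(t+T)^\mu + y_0\bigr] + u_1 + \mathrm{O}((t+T)^{\mu-1/2}).
\]
On the other hand, the front asymptotics \eqref{e: front asymptotics} with $b = 1$ yield $v^-((t+T)^\mu) = u_0(t+T)^\mu + u_1 + a u_0 + \mathrm{O}(e^{-\eta(t+T)^\mu})$, and matching the two constant terms forces $y_0 = a$. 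The matching estimate analogous to \eqref{e: colinear matching estimate} follows immediately.

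The derivative estimate analogous to \eqref{e: colinear matching derivative estimate} is obtained by the same expansion after one $\partial_y$, which contributes a factor $\Deff^{-1/2}(t+T)^{-1/2}$ from the chain rule while $\partial_\xi \psi_j^{\mathrm{I}/\mathrm{II}/\mathrm{h}}$ remain uniformly bounded near $\xi = 0$; the leading contribution of $\partial_y v^+$ at $y = (t+T)^\mu$ is $u_0$, matching the front tail $\partial_y v^- = u_0 + \mathrm{O}(e^{-\eta(t+T)^\mu})$. The higher derivative bounds \eqref{e:colinear higher derivative estimate 1}--\eqref{e: colinear higher derivative estimate 2} follow identically: each $\partial_y$ introduces $(t+T)^{-1/2}$, and $\partial_\xi^k \psi^{\mathrm{I}/\mathrm{II}/\mathrm{h}}_j$ are bounded on the regime of interest by Lemmas \ref{l: lin ind psi 1 estimates} and \ref{l: lin ind psiII 2 estimate} combined with the explicit Gaussian form of $\psi^{\mathrm{I}/\mathrm{II}}_0$. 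The only substantive new point compared to the co-linear argument is that the linear growth $u_0 y$ and the constant term $u_1$ in $v^-$ are now matched by two separate components of $\Phi^+$; the identity $\psiII_0 = \Deff^{-1/2} \partial_\xi \psiI_0$ from \eqref{e: lin ind psi II 0 eqn} is precisely what makes this consistent, which is the only place where the constructive choice of the leading coefficients in Section \ref{s: lin ind diffusive tail} enters the matching.
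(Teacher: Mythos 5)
Your proposal is correct and follows essentially the same route as the paper's proof: you use the decomposition $v^+ = u_0\,\phiIp + u_1\,\phiIIp + Q(0,0,\phihp)^T$ coming from $Qe_0 = u_0$, $Qe_1 = u_1$, Taylor-expand $\psiI_0$ and $\psiII_0$ at $\xi_* \to 0$, and match the linear-in-$(t+T)^\mu$ term against $u_0$ and the constant term against $u_1 + a u_0$ by choosing $\beta_0 = \sqrt{\Deff}$ and $y_0 = a$, with the $\phihp$ contribution absorbed at order $(t+T)^{-1/2}$. The identity $\psiII_0 = \Deff^{-1/2}\partial_\xi\psiI_0$ playing the role the translate played in the co-linear case is exactly the observation the paper relies on.
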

\begin{proof}
	Recalling that $v^+(y,t) = Q \Phi^+(y,t)$ with $Q$ defined in Lemma \ref{l: lin ind normal form} and $\Phi^+$ defined in Corollary \ref{c: lin ind diffusive tail original coordinates}, we write
	\begin{align*}
	v^+(y,t) &= Q \begin{pmatrix}
	\phiIp(y,t) \\ 0 \\ 0 
	\end{pmatrix}
	+ Q \begin{pmatrix} 0 \\ \phiIIp(y,t) \\ 0 \end{pmatrix}
	+ Q \begin{pmatrix}0 \\ 0 \\ \phihp (y,t) \end{pmatrix} \\
	&= u_0 \phiIp(y,t) + u_1 \phiIIp(y,t) + Q \begin{pmatrix}0 \\ 0 \\ \phihp (y,t) \end{pmatrix},
	\end{align*}
	using $Qe_0 = u_0$ and $Q e_1 = u_1$ from the construction of $Q$ in Lemma \ref{l: lin ind normal form} Using the formulas from Corollary \ref{c: lin ind diffusive tail original coordinates} for $\phiIp$ and $\phiIIp$ together with the expressions \eqref{e: lin ind psi I 0 expression} and \eqref{e: lin ind psi II 0 expression} for $\psiI_0$ and $\psiII_0$, and Taylor expanding the exponentials, we obtain the following expansion
	\begin{align}
	u_0 \phiIp((t+T)^\mu, t) + u_1 \phiIIp((t+T)^\mu, t) = u_0 \frac{\beta_0}{\sqrt{\Deff}} \left[ (t+T)^\mu + y_0 \right]  + u_1 \frac{\beta_0}{\sqrt{\Deff}} + \mathrm{O}[(t+T)^{\mu-1/2}]. \label{e: lin ind intermediate tail asymptotics}
	\end{align}
	The higher order terms are estimated using the Gaussian estimates on $\psi^\mathrm{I/II}_j, j = 1, 2$ from Lemmas \ref{l: lin ind psi 1 estimates} and \ref{l: lin ind psiII 2 estimate}. 
	By \eqref{e: front asymptotics}, the front interior satisfies
	\begin{align}
	v^-((t+T)^\mu) = u_0 (t+T)^\mu + u_1 + a u_0 + \mathrm{O}[(t+T)^{\mu-1/2}] \label{e: lin ind intermediate front asymptotics}
	\end{align}
	for all $t > 0$ provided $T$ is sufficiently large. Notice that if we choose $\beta_0 = \sqrt{\Deff}$ and $y_0 = a$, then the expressions \eqref{e: lin ind intermediate tail asymptotics} and \eqref{e: lin ind intermediate front asymptotics} match up to an $\mathrm{O}[(t+T)^{\mu-1/2}]$ error. From the expression for $\phihp$ from Corollary \ref{c: lin ind diffusive tail original coordinates}, the Gaussian estimate on $\psih_1$ from Lemma \ref{l: lin ind psi 1 estimates}, and the Gaussian estimate \eqref{e: lin ind psi h 0 estimate}, we obtain the estimate
	\begin{align*}
	\left| Q \begin{pmatrix} 0 \\ 0 \\ \phihp((t+T)^\mu, t) \end{pmatrix} \right| \leq C (t+T)^{-1/2}. 
	\end{align*}
	Combining with the expansions \eqref{e: lin ind intermediate tail asymptotics}-\eqref{e: lin ind intermediate front asymptotics}, we thereby obtain the desired estimate
	\begin{align*}
	| v^+((t+T)^\mu, t) - v^-((t+T)^\mu) | \leq C (t+T)^{\mu-1/2}. 
	\end{align*}
	The estimates on derivatives follow similarly. 
\end{proof}

Having measured the pointwise matching error at $y = (t+T)^\mu$, we now smoothly glue the diffusive tail to the interior of the front exactly as in Section \ref{s: colinear}. Let $\chi(y,t)$ be defined by \eqref{e: chi t def}. We then define our approximate solution as 
\begin{align}
v^\mathrm{app} (y,t) = \chi(y,t) v^-(y) + (1-\chi(y,t)) v^+(y,t). 
\end{align}
Letting $F_\mathrm{res}[v]$ be defined as in \eqref{e: F res def} to measure the residual error of approximate solutions, we obtain the following estimates.
\begin{prop}\label{p: lin ind residual estimate}
	Let $\beta_0 = \sqrt{\Deff}$ and $y_0 = a$. Fix $0 < \mu < \frac{1}{8}$ and let $r = 2 + \mu$. There exists a constant $C > 0$ such that for all $T$ sufficiently large and $t > 0$, we have
	\begin{align}
	\| F_\mathrm{res}[v^\mathrm{app}](\cdot, t) \|_{L^\infty_{0, r}} \leq \frac{C}{(t+T)^{1/2-4\mu}}. 
	\end{align}
	We let $R(y,t) = F_\mathrm{res}(y,t)$. 
\end{prop}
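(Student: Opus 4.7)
The plan is to prove Proposition \ref{p: lin ind residual estimate} by \emph{exactly} the same three-region decomposition used in the proof of Proposition \ref{p: colinear residual}; the only substantive difference is that the estimates on $v^+$ and the pointwise matching errors now come from Corollary \ref{c: lin ind diffusive tail original coordinates} and Lemma \ref{l: lin ind pointwise matching} in place of Corollary \ref{c: colinear vplus residual} and Lemma \ref{l: colinear pointwise matching}. Concretely, I split $\R$ into the outer tail region $\{y \geq (t+T)^\mu + 1\}$, the inner front region $\{y \leq (t+T)^\mu\}$, and the intermediate matching strip $\{(t+T)^\mu \leq y \leq (t+T)^\mu + 1\}$, and estimate $F_\mathrm{res}[v^\mathrm{app}]$ separately on each.

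On the outer region, $v^\mathrm{app}(y,t) = v^+(y,t)$, so Corollary \ref{c: lin ind diffusive tail original coordinates} gives a Gaussian bound of order $(t+T)^{-3/2}$ on $F_\mathrm{res}[v^+]$; multiplying by the algebraic weight $\rho_{0,r}$ and using that $y \geq (t+T)^\mu$ forces $\langle y\rangle^{2+\mu}$ growth to be absorbed by the Gaussian, the contribution is $\mathrm{O}((t+T)^{-1/2 + \mu/2})$. On the inner region, $v^\mathrm{app} = v^- = \omega q_*$ and since $q_*$ is a traveling wave solution in the frame moving at speed $c_*$, the only residual comes from the logarithmic delay correction, giving $F_\mathrm{res}[v^-] = \frac{3}{2\eta_*(t+T)} \omega \partial_y q_*$; the front asymptotics \eqref{e: front asymptotics} bound $|\omega \partial_y q_*|$ by $C \rho_{0,1}$, and combined with the weight $\rho_{0,r}$ and the cutoff $y \leq (t+T)^\mu$ this yields $\mathrm{O}((t+T)^{\mu - 1})$, which is well within the claimed bound.

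The main work, as in the co-linear case, is the matching strip. I decompose $F_\mathrm{res}[v] = F_\mathrm{res}^\mathrm{lin}[v] + F_\mathrm{res}^\mathrm{nl}[v]$ with $F_\mathrm{res}^\mathrm{nl}[v] = -\omega N(\omega^{-1} v)$. Since $\omega(y) = e^{\eta_* y}$ for $y \geq 1$ and $\omega^{-1} v^\mathrm{app}$ is uniformly bounded, Taylor's theorem yields $|F_\mathrm{res}^\mathrm{nl}[v^\mathrm{app}]| \lesssim e^{-\eta_*(t+T)^\mu}|v^\mathrm{app}|^2$, which is super-polynomially small in the strip and hence irrelevant. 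For the linear part I exploit linearity directly and write
\begin{align*}
F_\mathrm{res}^\mathrm{lin}[v^\mathrm{app}] = \chi\, F_\mathrm{res}^\mathrm{lin}[v^-] + (1-\chi)\, F_\mathrm{res}^\mathrm{lin}[v^+] + [F_\mathrm{res}^\mathrm{lin}, \chi](v^- - v^+).
\end{align*}
The first two terms inherit the outer/inner bounds just established. The commutator $[F_\mathrm{res}^\mathrm{lin}, \chi]$ is a first-order differential operator in $y$ with smooth, bounded coefficients, so it is controlled by $|v^- - v^+| + |\partial_y(v^- - v^+)|$ on the strip.

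The crucial input for the commutator term is pointwise matching. I Taylor expand $v^\pm$ around $y = (t+T)^\mu$, which uses the higher-derivative bounds \eqref{e:colinear higher derivative estimate 1}--\eqref{e: colinear higher derivative estimate 2} (whose linearly independent analogues are delivered by Lemma \ref{l: lin ind pointwise matching}); subtracting the two expansions and inserting the pointwise matching estimates from Lemma \ref{l: lin ind pointwise matching} gives $|v^- - v^+| + |\partial_y(v^- - v^+)| = \mathrm{O}((t+T)^{\mu - 1/2})$ on the strip. Multiplying by $\langle y\rangle^{2+\mu} \lesssim (t+T)^{\mu(2+\mu)} \leq (t+T)^{3\mu}$ on the strip yields the final commutator bound of $\mathrm{O}((t+T)^{-1/2 + 4\mu})$. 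Taking the worst of the three regions produces the claimed $(t+T)^{-1/2 + 4\mu}$ rate. The only conceptual obstacle is bookkeeping of the extra $\psiII$ component in the expansion (absent in the co-linear case), but since Lemma \ref{l: lin ind pointwise matching} already absorbs its contribution into the same pointwise error, no new mechanism is required and the argument is structurally identical to Proposition \ref{p: colinear residual}.
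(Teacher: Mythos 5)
Your proof is correct and follows essentially the same route as the paper. The paper's own proof of Proposition \ref{p: lin ind residual estimate} is a one-line observation: with the pointwise matching of Lemma \ref{l: lin ind pointwise matching} in hand, the argument is identical to that of Proposition \ref{p: colinear residual}. What you have written is that argument spelled out, with the correct substitutions of Corollary \ref{c: lin ind diffusive tail original coordinates}, Lemma \ref{l: lin ind pointwise matching}, and Lemma \ref{l: lin ind psi 1 estimates}/\ref{l: lin ind psiII 2 estimate} in place of their co-linear counterparts; you also correctly recognize that $F_\mathrm{res}[v^+]$ and $F_\mathrm{res}^+[v^+]$ agree on $\{y\geq 1\}$ so Corollary \ref{c: lin ind diffusive tail original coordinates} directly supplies the outer-region bound, and you handle the nonlinearity on the matching strip and the commutator $[F_\mathrm{res}^\mathrm{lin},\chi]$ exactly as in the co-linear proof. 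One small imprecision (also present, tacitly, in the paper's own co-linear proof): in the inner region the weighted bound is actually $\mathrm{O}((t+T)^{\mu(3+\mu)-1})$ rather than $\mathrm{O}((t+T)^{\mu-1})$ once the $\rho_{0,r}\rho_{0,1}$ product is accounted for; since $\mu(3+\mu)-1 < -\tfrac{1}{2}+4\mu$ for $\mu<\tfrac{1}{8}$, this does not affect the stated conclusion.
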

With the pointwise matching estimates of Lemma \ref{l: lin ind pointwise matching} in hand, the proof of Proposition \ref{p: lin ind residual estimate} is identical to that of Proposition \ref{p: colinear residual}. We further obtain the following corollary which measures how well $\omega^{-1} v^\mathrm{app}(y,t)$ resembles the critical front $q_*(y)$. 

\begin{corollary}\label{c: lin ind front approximated by vapp}
	Fix $0 < \mu < \frac{1}{8}$ and let $r = 2 + \mu$. There exists a constant $C>0$ such that
	\begin{align}
	\| q_* - \omega^{-1} \vapp(\cdot, t) \|_{L^\infty_{0, r}} \leq \frac{C}{(t+T)^{1/2-4\mu}}
	\end{align}
	for all $t>0$ provided $T$ is sufficiently large. 
\end{corollary}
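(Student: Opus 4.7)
The plan is to mirror the structure of the analogous statement in the colinear case (Corollary \ref{c: colinear front approximated by vapp}), reducing everything to the pointwise matching result of Lemma \ref{l: lin ind pointwise matching} together with straightforward size estimates on $v^+$. The starting observation is the identity
\begin{align*}
q_*(y) - \omega^{-1}(y) \vapp(y,t) = (1-\chi(y,t))\,\omega^{-1}(y)\bigl[v^-(y) - v^+(y,t)\bigr],
\end{align*}
which follows because $v^-(y) = \omega(y) q_*(y)$. Hence the difference vanishes identically on $y \leq (t+T)^\mu$, and it suffices to bound the weighted norm on $\{y \geq (t+T)^\mu\}$, where $\omega^{-1}(y) = e^{-\eta_* y}$ provides strong exponential decay.

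First I would handle the transition region $(t+T)^\mu \leq y \leq (t+T)^\mu + 1$. Lemma \ref{l: lin ind pointwise matching} gives $|v^-((t+T)^\mu) - v^+((t+T)^\mu,t)| \leq C (t+T)^{\mu-1/2}$ together with matching estimates on $\partial_y(v^- - v^+)$ and uniform bounds on higher derivatives of $v^\pm$ in a unit interval around $(t+T)^\mu$. A one-term Taylor expansion (identical in form to the argument carried out in the proof of Proposition \ref{p: colinear residual}) extends this to $|v^-(y) - v^+(y,t)| \leq C (t+T)^{\mu-1/2}$ throughout this interval. Multiplying by $\omega^{-1}(y) \leq e^{-\eta_* (t+T)^\mu}$ and $\langle y \rangle^{2+\mu} \leq C (t+T)^{\mu(2+\mu)}$ produces a bound that is stretched-exponentially small in $t$, easily absorbed into $C(t+T)^{4\mu-1/2}$.

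Next I would treat the region $y \geq (t+T)^\mu + 1$, where $\chi \equiv 0$. Crude bounds suffice here: the front asymptotics \eqref{e: front asymptotics} give $|v^-(y)| \leq C(1+y)$, and the explicit form of $v^+(y,t) = Q\Phi^+(y,t)$ from Corollary \ref{c: lin ind diffusive tail normal form}, combined with the Gaussian decay $\psiI_0(\xi) = \beta_0 \xi e^{-\xi^2/4}$ and the uniform estimates on $\psi^{\mathrm{I/II/h}}_j$ from Lemmas \ref{l: lin ind psi 1 estimates} and \ref{l: lin ind psiII 2 estimate}, give $|v^+(y,t)| \leq C(1+y)$ for all $t \geq 0$. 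Hence
\begin{align*}
\sup_{y \geq (t+T)^\mu + 1} \langle y \rangle^{2+\mu}\, e^{-\eta_* y}\, |v^-(y) - v^+(y,t)| \;\leq\; C \sup_{y \geq (t+T)^\mu + 1} \langle y \rangle^{3+\mu} e^{-\eta_* y} \;\leq\; C e^{-\eta_* (t+T)^\mu/2},
\end{align*}
again stretched-exponentially small. Combining the two regions yields a bound far stronger than $C(t+T)^{-1/2+4\mu}$.

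There is no serious obstacle here; the proof is essentially a bookkeeping exercise. The one minor subtlety is confirming that the crude polynomial bound $|v^+(y,t)| \leq C(1+y)$ holds uniformly in $t$, which requires inspecting the Gaussian prefactor $e^{-\xi^2/4}$ absorbing the growing $(t+T)^{1/2}$ in front of $\psiI_0$. Once this is checked, the estimates assemble as above and the corollary follows directly from Lemma \ref{l: lin ind pointwise matching} and Corollary \ref{c: lin ind diffusive tail original coordinates}.
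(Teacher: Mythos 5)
Your proposal is correct and takes essentially the same approach the paper indicates (the paper states the colinear analogue, Corollary \ref{c: colinear front approximated by vapp}, with only the remark that it ``follows from Lemma \ref{l: colinear pointwise matching} and the choice of $v^+$'', and gives no further details here). Your identity $q_* - \omega^{-1}\vapp = (1-\chi)\,\omega^{-1}(v^- - v^+)$ is the correct starting point, the support property of $1-\chi$ kills the region $y \le (t+T)^\mu$, and the two-region estimate (pointwise matching plus Taylor expansion in the transition strip; crude linear growth of $v^\pm$ combined with $\omega^{-1}(y)=e^{-\eta_* y}$ for $y \ge (t+T)^\mu+1$) closes the argument — in fact giving a stretched-exponentially small bound, strictly stronger than the stated rate $(t+T)^{4\mu - 1/2}$, which is only needed because it matches the rate from Proposition \ref{p: lin ind residual estimate}. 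One small notational note: the bounds in Lemma \ref{l: lin ind pointwise matching} (via Lemma \ref{l: colinear pointwise matching}) are written $\frac{C}{(t+T)^{\mu-1/2}}$ in the paper but the proof establishes the decaying quantity $C(t+T)^{\mu-1/2}$, which is what you (correctly) use.
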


\section{Linear estimates}\label{s: linear estimates}
In the previous two sections, we have constructed approximate solutions with good residual error, measured in Proposition \ref{p: colinear residual} when $u_0$ and $u_1$ are co-linear and Proposition \ref{p: lin ind residual estimate} when $u_0$ and $u_1$ are linearly independent. The one remaining ingredient needed to prove Theorem \ref{t: main} according to the program in \cite{CAMS} is sharp estimates on the linearized evolution $e^{\mcl t}$ near the critical front $q_*$. The key estimates are roughly of the form
\begin{align}
\| e^{\mcl t} g \|_{L^\infty_{0,-1}} &\leq \frac{C}{(1+t)^{3/2}} \| g \|_{L^\infty_{0, r}}, \label{e: linear estimate 1} \\
\| \partial_x e^{\mcl t} g \|_{L^\infty_{0,r}} &\leq \frac{C}{t^{3/2-r/2}} \| g\|_{L^\infty_{0,r}}, \label{e: linear estimate 2}
\end{align}
which capture two separate mechanisms for diffusive decay. The first captures sharp $t^{-3/2}$ decay by giving up spatial localization. This is analogous to the standard $t^{-1/2}$ decay estimate for the heat evolution from $L^1$ to $L^\infty$ , but with improved decay rates thanks to the absorption mechanism created by the stable state in the wake of the front; see \cite{SIMA}. The second preserves spatial localization but achieves decay of derivatives through exploiting diffusive smoothing. 

These and other closely related decay estimates may be proven via a detailed analysis of the resolvent near the origin, as in \cite{SIMA, CAMS} which treat the scalar, higher order case. In this scalar case, the estimate \eqref{e: linear estimate 1} was originally proven in $L^2$-based spaces in \cite{SIMA}, while the linear analysis in \cite{CAMS} adapts this estimate to $L^1$ and $L^\infty$ based spaces and further proves derivative estimates of the form \eqref{e: linear estimate 2} needed to close the front selection argument. 

The proofs are almost the same in the multi-component case as in \cite{CAMS, SIMA}, and we explain the essential modifications here. 

\subsection{Resolvent estimates}
The general strategy is to first analyze the limiting resolvent equation
\begin{align}
	(\mcl_+ - \gamma^2) u = g, \label{e: far field resolvent equation}
\end{align}
for $g \in L^1_{1,1} (\R)$, where 
\begin{align}
	\mcl_+ = D (\partial_y - \eta_*)^2 + c_* (\partial_y - \eta_*) + f'(0) 
\end{align}
captures the limiting dynamics of $\mcl$ as $y \to +\infty$. In solving \eqref{e: far field resolvent equation}, we restrict to odd data $g$. This is because the leading order far-field dynamics are essentially governed by the heat equation, and for the heat equation restricting to odd initial data is equivalent to enforcing a homogeneous Dirichlet boundary condition at $y = 0$, which models absorption in the wake due to the exponential stability of $u^-$. 

We are then able to transfer estimates of the limiting resolvent to the full resolvent $(\mcl-\gamma^2)^{-1}$ via a \emph{far-field/core decomposition}, in which we capture the limiting dynamics at $+\infty$ in an explicit ansatz, and then solve for localized corrections by exploiting Fredholm properties of $\mcl$ in exponentially weighted function spaces; see \cite[Section 3.2]{CAMS} or \cite[Section 3.1]{SIMA}. Detailed estimates on the resolvent near $\gamma^2 = 0$ may then be transferred to sharp temporal decay estimates through the inverse Laplace transform, using carefully chosen contours which follow the essential spectrum of $\mcl$; see for instance \cite[Section 4]{CAMS}. 

The only place in the proofs of the linear estimates in \cite{CAMS, SIMA} which relies on the restriction to scalar equations is in the proof of \cite[Lemma 2.2]{SIMA}, which is the key step in describing the far-field resolvent. The proof of \cite[Lemma 2.2]{SIMA} only relies on the restriction to the scalar case in guaranteeing that the results are sharp, but we nonetheless give an adaptation here which guarantees that our linear estimates are sharp in the multi-component case. 
\begin{lemma}\label{l: center projections pole}
	Let $\mathcal{M}(\gamma^2)$ be the matrix obtained by reformulating the resolvent equation \eqref{e: far field resolvent equation} as a first-order system
	\begin{align}
	(\partial_y - \mathcal{M}(\gamma^2)) U = \begin{pmatrix} 0 \\ D^{-1} g \end{pmatrix} \label{e: far field resolvent eqn first order}
	\end{align}
	in $U = (u,u_y) \in \R^{2n}$. 
	
	\begin{enumerate}[i.]
		\item For $\gamma$ small, $\mathcal{M}(\gamma^2)$ has precisely two eigenvalues $\nu^\pm(\gamma)$ in a neighborhood of the origin, which are analytic in $\gamma$ and satisfy the expansions
		\begin{align}
		\nu^\pm(\gamma) = \pm \sqrt{-d_{10}d_{02}^{-1} } \gamma + \mathrm{O}(\gamma^2). 
		\end{align}
		\item Let $P^\mathrm{cs/cu}(\gamma)$ denote the spectral projections onto the eigenspaces of $\mathcal{M}(\gamma^2)$ associated with $\nu^\mp(\gamma)$, respectively, which are one-dimensional provided $\gamma \neq 0$. Then $P^\mathrm{cs/cu}(\gamma)$ are meromorphic in $\gamma$ in a neighborhood of the origin, with expansions
		\begin{align}
		P^\mathrm{cs/cu}(\gamma) = \mp \frac{1}{\gamma} P_\mathrm{pole} + \mathrm{O}(1) 
		\end{align}
		for some matrix $P_\mathrm{pole}\in \C^{2n \times 2n}$.
	\end{enumerate}
\end{lemma}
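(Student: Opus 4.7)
The plan is to identify the eigenvalues of $\mathcal{M}(\gamma^2)$ near the origin with roots of the weighted dispersion relation, exploit the expansion guaranteed by Hypothesis \ref{hyp: dispersion reln}(i) to extract their asymptotics via a rescaling, and then read off the pole structure of the spectral projections from the underlying Jordan block. First I would make the connection between the characteristic polynomial of $\mathcal{M}(\gamma^2)$ and the dispersion relation precise: since \eqref{e: far field resolvent eqn first order} arose from the weighted operator $\mcl_+ = D(\partial_y - \eta_*)^2 + c_*(\partial_y - \eta_*) + f'(0)$, the characteristic polynomial of $\mathcal{M}(\gamma^2)$ in $\nu$ is, up to a nonzero constant, $d_{c_*}(\gamma^2, \nu - \eta_*) = d_{c_*}(\gamma^2, \nu + \nu_*)$. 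By Hypothesis \ref{hyp: dispersion reln}(i), this admits the local expansion
\begin{equation*}
d_{c_*}(\gamma^2, \nu + \nu_*) = d_{10}\gamma^2 + d_{02}\nu^2 + \mathrm{O}(\nu^3, \gamma^4, \gamma^2\nu),
\end{equation*}
so at $\gamma = 0$ the matrix $\mathcal{M}(0)$ has a double eigenvalue at $\nu = 0$, while the remaining $2n-2$ eigenvalues stay bounded away from zero.

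For part (i), I would isolate the two small roots via the rescaling $\nu = \gamma w$, which turns the vanishing locus of $d_{c_*}(\gamma^2, \nu + \nu_*)$ into
\begin{equation*}
d_{10} + d_{02} w^2 + \gamma \, g(\gamma, w) = 0,
\end{equation*}
with $g$ analytic near $(0, w)$. Applying the analytic implicit function theorem at $w = \pm\sqrt{-d_{10}/d_{02}}$ (real because $d_{10}d_{02} < 0$) yields two analytic branches $w^\pm(\gamma)$, hence $\nu^\pm(\gamma) = \gamma w^\pm(\gamma)$ are analytic in $\gamma$ with the claimed leading order expansion. Analyticity of the projections onto the corresponding one-dimensional eigenspaces away from $\gamma = 0$ then follows from the Dunford integrals $P^{\mathrm{cs}/\mathrm{cu}}(\gamma) = \frac{1}{2\pi i} \oint_{\Gamma^\pm} (z - \mathcal{M}(\gamma^2))^{-1}\, dz$ with small contours encircling $\nu^\mp(\gamma)$ only.

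For part (ii), I would work in a basis for the two-dimensional generalized eigenspace of $\mathcal{M}(0)$ at $\nu = 0$. Hypothesis \ref{hyp: DR matrix pencil} guarantees that $\ker A^0$ is one-dimensional, which translates to $\ker \mathcal{M}(0)$ being one-dimensional at the eigenvalue $0$, so the two-dimensional generalized eigenspace is a genuine Jordan block of size two. In a suitable basis, $\mathcal{M}(\gamma^2)$ restricted to (an analytic perturbation of) this generalized eigenspace has the form
\begin{equation*}
\begin{pmatrix} 0 & 1 \\ -\alpha \gamma^2 & 0 \end{pmatrix} + \mathrm{O}(\gamma^4),
\end{equation*}
with $\alpha = -d_{10}/d_{02} > 0$. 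A direct computation shows the rank-one spectral projections onto the eigenspaces of $\nu^\pm(\gamma) = \pm\sqrt{\alpha}\,\gamma + \mathrm{O}(\gamma^2)$ are
\begin{equation*}
P^{\mathrm{cs}/\mathrm{cu}}(\gamma) = \mp \frac{1}{2\sqrt{\alpha}\,\gamma}\begin{pmatrix} 0 & 1 \\ 0 & 0 \end{pmatrix} + \mathrm{O}(1),
\end{equation*}
which lifts to the stated expansion of $P^{\mathrm{cs}/\mathrm{cu}}(\gamma)$ on $\R^{2n}$ after including the bounded correction from the remaining (bounded-away-from-zero) spectrum. The residue matrix $P_{\mathrm{pole}}$ can then be identified with the rank-one operator $u_0 \otimes \tilde e_{\mathrm{ad}}$ along the generalized kernel, where $\tilde e_{\mathrm{ad}}$ is the left generalized eigenvector normalized via $\langle u_0, e_\mathrm{ad}\rangle$.

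The main obstacle is the careful bookkeeping at the transition from the scalar arguments of \cite{SIMA} to the matrix setting: one must verify that the Jordan block structure of the full $2n \times 2n$ matrix $\mathcal{M}(0)$ at the eigenvalue zero is genuinely two-dimensional (no larger), and that only these two eigenvalues bifurcate to small eigenvalues while all other spectral projections remain analytic and bounded. The first point is a direct consequence of Hypothesis \ref{hyp: DR matrix pencil} via the Lyapunov–Schmidt reduction already recorded in the equivalence between Hypotheses \ref{hyp: dispersion reln}(i) and \ref{hyp: DR matrix pencil}, while the second follows from standard analytic perturbation theory applied to the remaining spectral subspace, which sits at a positive distance from the origin uniformly in $\gamma$ small.
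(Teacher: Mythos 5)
Your argument is correct and reaches the stated pole structure, but proceeds along a somewhat different technical route than the paper, which simply defers to the proof of Lemma~2.2 in \cite{SIMA}. There the projections $P^\mathrm{cs/cu}(\gamma)$ are constructed as Lagrange interpolation polynomials in $\mathcal{M}(\gamma^2)$ with coefficients involving the small eigenvalues, and the simple pole falls out of the resulting quotients of eigenvalue differences. You instead pass to the total spectral subspace for the two small eigenvalues, reduce $\mathcal{M}(\gamma^2)$ to an analytic $2\times 2$ block there, and compute the rank-one projection directly through left/right eigenvectors. This is in fact closer in spirit to the paper's own Appendix B argument for Lemma~\ref{l: sharpness}, where $P^\mathrm{cs}(\gamma)$ is written via
\begin{equation*}
P^\mathrm{cs}(\gamma) v = \frac{\langle v, \varrho^-_\mathrm{ad}(\gamma)\rangle}{\langle \varrho^-(\gamma), \varrho^-_\mathrm{ad}(\gamma)\rangle}\, \varrho^-(\gamma),
\end{equation*}
and the denominator is shown to have a simple zero at $\gamma=0$. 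Both routes are valid; yours has the advantage of being self-contained rather than delegating most of the work to \cite{SIMA}. One small slip: for the restricted block to have real eigenvalues $\pm\sqrt{\alpha}\,\gamma$ with $\alpha=-d_{10}/d_{02}>0$, the bottom-left entry should be $+\alpha\gamma^2$, not $-\alpha\gamma^2$; as written your block has eigenvalues $\pm i\sqrt{\alpha}\,\gamma$. Your subsequent projection formula is consistent with the correct sign, so this is only a typo. The concern you flag at the end---that $\ker\mathcal{M}(0)$ is one-dimensional so the size-two generalized eigenspace is a genuine Jordan block, and that the remaining $2n-2$ spatial eigenvalues stay uniformly away from the origin---is handled exactly as you propose, via the simplicity of the double root in Hypotheses~\ref{hyp: dispersion reln}(i)/\ref{hyp: DR matrix pencil} and standard perturbation theory on the complementary spectral subspace.
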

\begin{proof}
	The proof is identical to that of \cite[Lemma 2.2]{SIMA}, since the only step there which relies on the restriction to scalar equations is in proving that the top right entry of $P_\mathrm{pole}$ is nonzero, but here we are not yet claiming that any specific entries of $P_\mathrm{pole}$ are nontrivial. 
\end{proof}

The strategy of \cite[Section 2]{SIMA} is to recover the solution to the resolvent equation \eqref{e: far field resolvent equation} from the first order formulation \eqref{e: far field resolvent eqn first order}. Let $T_\gamma$ denote the matrix Green's function associated to this first order formulation, which solves
\begin{align}
	(\partial_y - \mathcal{M}(\gamma^2)) T_\gamma = - \delta_0 I_{2n},
\end{align}
where $I_{2n}$ denotes the identity matrix of size $2n$. The solution to \eqref{e: far field resolvent equation} is then given by
\begin{align}
u(y;\gamma) = \int_\R \Pi_1 T_\gamma(y-\zeta) \Lambda_1 g(\zeta) \, d \zeta, \label{e: far field resolvent solution}
\end{align}
where $\Pi_1 : \C^{2n} \to \C^n$ and $\Lambda_1 : \C^n \to \C^{2n}$ are defined by 
\begin{align}
	\Pi_1 \begin{pmatrix}
	g_1 \\ g_2 
	\end{pmatrix} = g_1,
	\quad 
	\Lambda_1 g = \begin{pmatrix} 0 \\ g \end{pmatrix}. 
\end{align}
Following the argument of \cite[Section 2]{SIMA}, we decompose $T_\gamma$ as
\begin{align}
	T_\gamma(\zeta) = -\left( e^{\nu^-(\gamma) \zeta} P^\mathrm{cs}(\gamma) + e^{\mathcal{M}(\gamma^2) \zeta} P^\mathrm{ss}(\gamma^2) \right) 1_{\{\zeta \geq 0\}} + \left( e^{\nu^+(\gamma) \zeta} P^\mathrm{cu}(\gamma) + e^{\mathcal{M}(\gamma^2) \zeta} P^\mathrm{uu}(\gamma^2) \right) 1_{\{\zeta < 0\}}, \label{e: T gamma decomp}
\end{align}
where $P^\mathrm{ss/uu}(\gamma^2)$ are the spectral projections onto the stable/unstable eigenvalues of $\mathcal{M}(\gamma^2)$ which are bounded away from the imaginary axis for $\gamma$ small. Using the Dunford integral, one readily sees that these projections are analytic in $\gamma^2$ in a neighborhood of the origin. The leading order time dynamics correspond to the terms in $u(y;\gamma)$ which are most singular in $\gamma$. The only terms in \eqref{e: T gamma decomp} which are singular in $\gamma$ are those involving $P_\mathrm{pole}$. The linear estimates we state here will then be sharp provided the terms involving $P_\mathrm{pole}$ have a nontrivial contribution to $u(x;\gamma)$ through the formula \eqref{e: far field resolvent solution}, which is guaranteed by the following lemma, which we prove in Appendix \ref{app: sharpness}.
\begin{lemma}\label{l: sharpness}
	$\Pi_1 P_\mathrm{pole} \Lambda_1 \neq 0 $. 
\end{lemma}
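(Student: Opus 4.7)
The plan is to compute $P_\mathrm{pole}$ explicitly in terms of the Jordan structure of $\mathcal{M}(0)$ at the eigenvalue $\nu = 0$, and then verify that the resulting rank-one operator has a nonzero top-row/bottom-column block. Using the block form
\begin{align*}
	\mathcal{M}(0) = \begin{pmatrix} 0 & I \\ -D^{-1} A^0 & -D^{-1} A^{01} \end{pmatrix}
\end{align*}
together with $A^{01} = c_* I - 2 D \eta_*$, one checks that the right kernel is spanned by $U_0 := (u_0, 0)^T$ via $A^0 u_0 = 0$, and that $\mathcal{M}(0) U_1 = U_0$ is solved by $U_1 := (u_1, u_0)^T$ precisely because of the relation $A^0 u_1 + A^{01} u_0 = 0$ in Hypothesis \ref{hyp: DR matrix pencil}. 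A parallel computation for the adjoint produces the left kernel row vector $\tilde V_0 = c(e_\mathrm{ad}^T A^{01},\, e_\mathrm{ad}^T D)$, and the simplicity condition \eqref{e: hyp simple DR} together with $\langle u_0, e_\mathrm{ad}\rangle \neq 0$ (guaranteed by algebraic simplicity of $\lambda = 0$) ensures $\tilde V_0 U_0 = 0$ and that $c$ may be normalized so that $\tilde V_0 U_1 = 1$. This confirms that the Jordan block of $\mathcal{M}(0)$ at $\nu = 0$ has size exactly two.

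Next, I would recover $P_\mathrm{pole}$ from a Laurent expansion of $(zI - \mathcal{M}(\gamma^2))^{-1}$ in $z$ near the origin for $\gamma$ small. Substituting $P^{\mathrm{cu}/\mathrm{cs}}(\gamma) = \pm \gamma^{-1} P_\mathrm{pole} + \mathrm{O}(1)$ from Lemma \ref{l: center projections pole} into the residue decomposition at the simple eigenvalues $\nu^\pm(\gamma) = \pm \kappa \gamma + \mathrm{O}(\gamma^2)$, with $\kappa = \sqrt{-d_{10}/d_{02}}$, the most singular contribution in $(z, \gamma)$ combines as
\begin{align*}
	\frac{P_\mathrm{pole}}{\gamma}\left(\frac{1}{z - \kappa \gamma} - \frac{1}{z + \kappa \gamma}\right) = \frac{2\kappa P_\mathrm{pole}}{z^2 - \kappa^2 \gamma^2}.
\end{align*}
Sending $\gamma \to 0$ yields a double pole in $z$ with coefficient $2\kappa P_\mathrm{pole}$. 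On the other hand, the standard Jordan-block formula identifies the $z^{-2}$ coefficient of $(zI - \mathcal{M}(0))^{-1}$ as the nilpotent part $U_0 \tilde V_0$ of the spectral projection onto the two-dimensional generalized eigenspace (under the normalization $\tilde V_0 U_1 = 1$). Matching gives $P_\mathrm{pole} = \tfrac{1}{2\kappa} U_0 \tilde V_0$.

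Finally, evaluating on the operators $\Pi_1$ and $\Lambda_1$ yields $\Pi_1 U_0 = u_0$ and $\tilde V_0 \Lambda_1 g = c\langle e_\mathrm{ad}, D g\rangle$, hence
\begin{align*}
	\Pi_1 P_\mathrm{pole} \Lambda_1 \, g \;=\; \frac{c}{2\kappa}\, u_0\, \langle e_\mathrm{ad}, D g\rangle,
\end{align*}
which is a rank-one operator, manifestly nonzero since $u_0 \neq 0$, $e_\mathrm{ad} \neq 0$, and $D$ is invertible (for instance, $g = D^{-1} e_\mathrm{ad}$ produces a nonzero output). The main obstacle, compared with the scalar case treated in \cite{SIMA}, is disentangling the two-parameter Laurent expansion in $(z, \gamma)$ and establishing the precise identification $P_\mathrm{pole} = \tfrac{1}{2\kappa} U_0 \tilde V_0$ with the correct Jordan-chain normalization; once this formula is in hand, nonvanishing of $\Pi_1 P_\mathrm{pole} \Lambda_1$ reduces to an algebraic consequence of Hypothesis \ref{hyp: DR matrix pencil} together with invertibility of the diffusion matrix $D$, which crucially plays the role that the leading coefficient $p_{2m}$ played in the scalar argument.
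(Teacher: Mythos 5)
Your argument is correct, and it lands on the same rank-one structure $P_\mathrm{pole}\propto U_0\,\tilde V_0$ that the paper exhibits, but by a genuinely different route. The paper represents $P^\mathrm{cs}(\gamma)$ as the rank-one spectral projection
\begin{align*}
	P^\mathrm{cs}(\gamma)v = \frac{\langle v,\varrho^-_\mathrm{ad}(\gamma)\rangle}{\langle\varrho^-(\gamma),\varrho^-_\mathrm{ad}(\gamma)\rangle}\varrho^-(\gamma)
\end{align*}
for the simple eigenvalue $\nu^-(\gamma)$, expands the eigenvector pair analytically in $\gamma$, and reads $P_\mathrm{pole}$ off from the residue of that expression; the identification $\varrho_0^-=(u_0,0)^T$ and the nonvanishing of $\langle v,\varrho^-_{\mathrm{ad},0}\rangle$ for some $v\in\mathrm{Rg}\,\Lambda_1$ are then relegated to a ``short direct calculation.'' You bypass the analytic eigenvector family altogether: you match the $\gamma$-pole of $P^\mathrm{cs/cu}(\gamma)$ against the Jordan-block Laurent expansion of $(zI-\mathcal{M}(0))^{-1}$, obtaining the explicit formula $P_\mathrm{pole}=\tfrac{1}{2\kappa}U_0\tilde V_0$ with the chain normalization $\tilde V_0 U_1 = 1$, and then read off $\Pi_1 P_\mathrm{pole}\Lambda_1$ directly from the block form of $\mathcal{M}(0)$. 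This has the virtue of making the constant and the role of $D$ completely explicit (indeed, invertibility of $D$ is precisely what makes $\tilde V_0\Lambda_1\neq 0$, playing the role of the leading coefficient $p_{2m}$ in the scalar case). Your two-parameter matching in $(z,\gamma)$ is stated loosely but is easily tightened: with $\Pi^c(\gamma) := P^\mathrm{cs}(\gamma)+P^\mathrm{cu}(\gamma)$ the Riesz projection over a fixed small contour, one has
\begin{align*}
	2\kappa P_\mathrm{pole} \;=\; \lim_{\gamma\to 0}\bigl[\nu^+(\gamma)P^\mathrm{cu}(\gamma)+\nu^-(\gamma)P^\mathrm{cs}(\gamma)\bigr] \;=\; \lim_{\gamma\to 0}\mathcal{M}(\gamma^2)\Pi^c(\gamma) \;=\; \mathcal{M}(0)\Pi^c(0),
\end{align*}
and the right-hand side is precisely the nilpotent $U_0\tilde V_0$ since $\mathcal{M}(0)$ restricted to $\mathrm{Rg}\,\Pi^c(0)$ has the single Jordan block of size $2$.

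One small misattribution in your first paragraph: the orthogonality $\tilde V_0 U_0=0$ that starts the Jordan chain follows from the double-root equation \eqref{e: hyp double root} together with $(A^0)^T e_\mathrm{ad}=0$, not from the simplicity inequality \eqref{e: hyp simple DR} or from $\langle u_0,e_\mathrm{ad}\rangle\neq 0$; what \eqref{e: hyp simple DR} delivers is $\tilde V_0 U_1=c\langle A^{02}u_0+A^{01}u_1,e_\mathrm{ad}\rangle\neq 0$, which caps the chain at length exactly two and lets you impose the normalization $\tilde V_0 U_1=1$. This does not affect the validity of the argument.
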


The leading order dynamics are then approximately given by
\begin{align*}
T_\gamma(\zeta) \approx \frac{1}{\gamma} P_\mathrm{pole} \left( e^{\nu^+(\gamma) \zeta} 1_{\{\zeta < 0\}} + e^{\nu^-(\gamma) \zeta} 1_{\{\zeta \geq 0\}} \right). 
\end{align*} 
Using that $-\nu^+(\gamma) \approx  \nu^-(\gamma) \approx - \nu_1 \gamma$ from Lemma \ref{l: center projections pole}, we then have
\begin{align*}
	T_\gamma(\zeta) \approx \frac{1}{\gamma}  e^{-\nu_1 \gamma |\zeta|} P_\mathrm{pole} =: G^\mathrm{heat}_\gamma(\zeta) P_\mathrm{pole}
\end{align*}
with $\nu_1 = \sqrt{-d_{10}d_{02}^{01}}$. Note that the scalar function $G^\mathrm{heat}_\gamma$ is precisely, up to a constant multiple, the Laplace transform of the fundamental solution to the heat equation with an appropriate diffusion coefficient. By \eqref{e: far field resolvent solution}, the solution to $(\mcl_+ - \gamma^2) u = g$ is then given to leading order by
\begin{align*}
	u(y;\gamma) \approx \Pi_1 P_\mathrm{pole} \Lambda_1 \int_\R G^\mathrm{heat}_\gamma (y-\zeta) g(\zeta) \, d \zeta,
\end{align*}
thereby explaining the sense in which the leading order dynamics are governed by the heat equation.

These approximations are made rigorous, with error estimates, in \cite[Section 2]{SIMA}. We can now completely follow that argument, together with that of \cite[Proposition 3.2]{CAMS} establishing estimates on derivatives, to obtain the following description of the far-field resolvent $(\mcl_+ - \gamma^2)^{-1}$. 

\begin{prop}[Far-field resolvent estimates]\label{p: far field resolvent expansion}
	Let $r > 2$. There exist positive constants $C$ and $\delta$ and a limiting operator $R_0^+ : L^1_{1,1} (\R) \to W^{1,\infty}_{-1, -1} (\R)$ such that for all odd functions $g \in L^1_{1,1}(\R)$, we have
	\begin{align}
	\| (\mcl_+ - \gamma^2)^{-1} g - R_0^+ g \|_{W^{1,\infty}_{-1,-1}} &\leq C |\gamma| \| g \|_{L^1_{1,1}}, \\
	\| (\mcl_+ - \gamma^2)^{-1} g - R_0^+ g \|_{W^{1,1}_{-r, -r}} & \leq C |\gamma| \|g\|_{L^1_{1,1}}
	\end{align}
	for all $\gamma \in B(0,\delta)$ such that $\gamma^2$ is to the right of $\sigma_\mathrm{ess}(\mcl_+)$. 
	
	Furthermore, provided $\delta$ is sufficiently small, we have for all odd functions $g \in L^\infty_{r,r} (\R)$,
	\begin{align}
		\| \partial_x (\mcl_+-\gamma^2)^{-1} g \|_{L^1_{1,1}} &\leq \frac{C}{|\gamma|} \| g \|_{L^\infty_{r,r}}, \\
		\| \partial_x (\mcl_+ - \gamma^2)^{-1} g \|_{L^\infty_{r,r}} &\leq \frac{C}{|\gamma|^{r-1}} \| g \|_{L^\infty_{r,r}}
	\end{align}
	for all $\gamma \in B(0,\delta)$ such that $\Re \gamma \geq \frac{1}{2} | \Im \gamma|$. 
\end{prop}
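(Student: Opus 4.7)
The plan is to follow the scalar strategy of \cite{SIMA, CAMS} essentially verbatim, using Lemma \ref{l: center projections pole} and Lemma \ref{l: sharpness} in place of their scalar counterparts. The starting point is the representation \eqref{e: far field resolvent solution} of $(\mcl_+ - \gamma^2)^{-1} g$ in terms of the matrix Green's function $T_\gamma$, together with the splitting \eqref{e: T gamma decomp} into contributions from the central projections $P^\mathrm{cs/cu}(\gamma)$ and the strongly hyperbolic projections $P^\mathrm{ss/uu}(\gamma^2)$. The strongly hyperbolic parts are analytic in $\gamma^2$ near $0$ and decay exponentially in $\zeta$, so they contribute an analytic (in $\gamma^2$) bounded operator from $L^1_{1,1}$ into $W^{1,\infty}_{-1,-1}$, and give Schauder-type derivative bounds that are uniform in $\gamma$. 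All singularity in $\gamma$ thus comes from $P^\mathrm{cs/cu}(\gamma)$, whose expansion from Lemma \ref{l: center projections pole} produces, after substituting $\nu^\pm(\gamma) = \pm \nu_1 \gamma + \mathrm{O}(\gamma^2)$ with $\nu_1 = \sqrt{-d_{10} d_{02}^{-1}}$, the leading-order kernel $\frac{1}{\gamma} e^{-\nu_1 \gamma |\zeta|} P_\mathrm{pole}$, i.e.\ the heat-equation resolvent kernel $G^\mathrm{heat}_\gamma(\zeta)$ times the constant matrix $\Pi_1 P_\mathrm{pole} \Lambda_1$.

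I would next define the limiting operator $R_0^+$ by isolating the non-analytic piece explicitly. Writing
\begin{equation*}
	(\mcl_+ - \gamma^2)^{-1} g = \Pi_1 P_\mathrm{pole} \Lambda_1 \int_\R G^\mathrm{heat}_\gamma(y-\zeta) g(\zeta) \, d\zeta + \mathcal{R}(\gamma) g,
\end{equation*}
the remainder $\mathcal{R}(\gamma)$ collects the higher-order Taylor terms in $\nu^\pm(\gamma)$, the subleading parts of the projection expansion, and the strongly hyperbolic contributions. A direct term-by-term inspection shows $\mathcal{R}(\gamma)$ is analytic in $\gamma$ and depends Lipschitz-continuously on $\gamma$ as a map $L^1_{1,1} \to W^{1,\infty}_{-1,-1}$ and $L^1_{1,1} \to W^{1,1}_{-r,-r}$, so $\mathcal{R}(\gamma) g - \mathcal{R}(0) g$ is $\mathrm{O}(|\gamma|)$ in both norms. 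To handle the heat kernel piece, I would use the oddness of $g$ to integrate against the antisymmetric part of $G^\mathrm{heat}_\gamma$: expanding
\begin{equation*}
	\frac{1}{\gamma} \left( e^{-\nu_1 \gamma |y-\zeta|} - e^{-\nu_1 \gamma |y+\zeta|} \right) = -2 \nu_1 \zeta \,\sign(y) + \mathrm{O}(|\gamma| \langle \zeta\rangle^2)
\end{equation*}
for $\zeta$ in the support-weighted region dictated by $g \in L^1_{1,1}$, recovers a Lipschitz limit $R_0^+ g$ and the $\mathrm{O}(|\gamma|)$ error in exactly the norms stated. This reproduces the scalar computation of \cite[Lemma 2.3]{SIMA}, and Lemma \ref{l: sharpness} guarantees $\Pi_1 P_\mathrm{pole} \Lambda_1 \neq 0$ so the limit $R_0^+$ is genuinely nontrivial.

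For the derivative estimates with data in $L^\infty_{r,r}$, I would follow \cite[Proposition 3.2]{CAMS}: differentiating in $y$ brings down a factor of $\nu^\pm(\gamma) = \mathrm{O}(\gamma)$, which cancels the $\gamma^{-1}$ pole and leaves a kernel of the form $e^{-\nu_1 \gamma |y-\zeta|} P_\mathrm{pole} + \bigo(1)$ with uniformly bounded $\gamma$-dependence. For the $L^1_{1,1}$ bound on $\partial_x (\mcl_+-\gamma^2)^{-1} g$, I convolve this kernel against $g \in L^\infty_{r,r}$ by splitting the integration into $|\zeta| \leq |\gamma|^{-1}$ and $|\zeta| > |\gamma|^{-1}$, and use that $r>2$ to control the tail; the resulting $|\gamma|^{-1}$ prefactor arises from integrating $\langle y\rangle\, e^{-\nu_1 \gamma |y|}$ in $y$. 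The $L^\infty_{r,r}$ bound follows similarly, with the factor $|\gamma|^{1-r}$ coming from balancing the $\langle y\rangle^r$ weight against the exponential decay on the length scale $|\gamma|^{-1}$. The restriction $\Re \gamma \geq \frac{1}{2} |\Im \gamma|$ ensures $\Re(\nu_1 \gamma) \gtrsim |\gamma|$ so that all exponentials are genuinely decaying.

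The main technical obstacle is ensuring the error terms in the $P^\mathrm{cs/cu}$ expansions are controlled in the correct weighted norms uniformly for $\gamma$ in the stated sectorial region, particularly where the heat-kernel argument interacts with the algebraic weights through cancellations exploiting oddness of $g$; this is purely a matter of carefully redoing the scalar calculations from \cite[Sections 2--3]{SIMA} and \cite[Section 3]{CAMS} with matrix-valued projections, which the spectral structure from Lemma \ref{l: center projections pole} supports without modification.
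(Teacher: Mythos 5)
Your proposal follows exactly the paper's strategy: the paper itself cites Lemma~\ref{l: center projections pole} and Lemma~\ref{l: sharpness} as the only modifications needed to the scalar arguments and then simply says the estimates follow by repeating the proofs of \cite[Section~2]{SIMA} and \cite[Proposition~3.2]{CAMS}, which is precisely the program you lay out. (A small sign/case slip in your antisymmetrized heat-kernel expansion --- the $\gamma\to 0$ limit is $\nu_1\bigl(|y+\zeta|-|y-\zeta|\bigr)$, not $-2\nu_1\zeta\,\sign(y)$, and the latter only approximates it on $|\zeta|<|y|$ --- is immaterial to the argument.)
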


The far-field/core decomposition used in \cite{SIMA, CAMS} to transfer these estimates to the full resolvent $(\mcl - \gamma^2)^{-1}$ relies only on Fredholm properties implied by Hypotheses \ref{hyp: dispersion reln}-\ref{hyp: point spectrum}. Repeating these arguments exactly, we then arrive at the following description of the full resolvent.
\begin{prop}[Full resolvent estimates]\label{p: full resolvent estimates}
	Fix $r > 2$. There exist positive constants $C$ and $\delta$ and a bounded limiting operator $R_0 : L^1_{0,1} (\R) \to W^{1,\infty}_{0, -1} (\R)$ such that
	\begin{align}
	\| (\mcl - \gamma^2)^{-1} g - R_0 g \|_{W^{1, \infty}_{0, -1}} \leq C |\gamma| \| g \|_{L^1_{0,1}}
	\end{align}
	for all $\gamma \in B(0,\delta)$ such that $\gamma^2$ is to the right of $\sigma_\mathrm{ess}(\mcl_+)$. 
	
	Furthermore, provided $\delta$ is sufficiently small, we have for all odd functions $g \in L^\infty_{0, r}(\R)$, 
	\begin{align}
	\| \partial_x (\mcl-\gamma^2)^{-1} g \|_{L^1_{0,1}} &\leq \frac{C}{|\gamma|} \| g \|_{L^\infty_{0, r}}, \\
	\| \partial_x (\mcl - \gamma^2)^{-1} g \|_{L^\infty_{0, r}} &\leq \frac{C}{|\gamma|^{r-1}} \| g \|_{L^\infty_{0,r}}
	\end{align}
	for all $\gamma \in B(0,\delta)$ such that $\Re \gamma \geq \frac{1}{2} | \Im |\gamma|$. 
\end{prop}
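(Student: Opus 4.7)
The plan is to transfer the far-field estimates in Proposition \ref{p: far field resolvent expansion} to the full resolvent via a far-field/core decomposition, exactly as in \cite[Section 3]{SIMA} and \cite[Section 3]{CAMS}. Fix a smooth cutoff $\chi_+$ with $\chi_+(y)=1$ for $y\geq 1$ and $\chi_+(y)=0$ for $y\leq 0$, chosen so that on the support of $\chi_+$ the operator $\mcl$ coincides with $\mcl_+$ up to an exponentially small error. Given $g$, let $g_+$ denote the odd extension across the origin of $\chi_+ g$, so that $g_+$ lies in the function spaces required by Proposition \ref{p: far field resolvent expansion}. I would seek the solution of $(\mcl-\gamma^2) u = g$ in the ansatz form
\begin{align*}
u(y;\gamma) = \chi_+(y)\,(\mcl_+-\gamma^2)^{-1} g_+(y) + w(y;\gamma) + \beta(\gamma)\, e_0(y),
\end{align*}
where $w$ lives in an exponentially weighted space $H^2_{\mathrm{exp},\eta_-,\eta_+}$ with $\eta_-<0<\eta_+$ small, and $(\beta,e_0)$ is a single scalar parameter paired with a fixed, exponentially localized ``pole absorbing'' function $e_0$.

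Substituting into the resolvent equation and using that $\mcl-\mcl_+$ is exponentially small on $\mathrm{supp}(\chi_+)$ produces a bordered equation
\begin{align*}
(\mcl-\gamma^2) w + \beta\,(\mcl-\gamma^2) e_0 = g - \chi_+ g_+ - [\mcl,\chi_+]\,(\mcl_+-\gamma^2)^{-1} g_+ =: h(\gamma),
\end{align*}
in which the forcing $h(\gamma)$ is compactly supported modulo an exponentially localized tail, and is uniformly bounded in the relevant weighted spaces by Proposition \ref{p: far field resolvent expansion}. By Palmer's theorem \cite{Palmer1,Palmer2} and Hypotheses \ref{hyp: dispersion reln}--\ref{hyp: stability on left}, the conjugated operator $\mcl-\gamma^2$ is Fredholm of index $-1$ from $H^2_{\mathrm{exp},\eta_-,\eta_+}$ to $L^2_{\mathrm{exp},\eta_-,\eta_+}$ for $\gamma$ in a neighborhood of $0$. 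The role of $e_0$ is to span a complement of the range, turning the pair
\begin{align*}
\mathcal{T}(\gamma):H^2_{\mathrm{exp},\eta_-,\eta_+}\times\C \longrightarrow L^2_{\mathrm{exp},\eta_-,\eta_+},\qquad (w,\beta)\mapsto (\mcl-\gamma^2)(w+\beta e_0),
\end{align*}
into a uniformly invertible operator near $\gamma=0$. The key input here is Hypothesis \ref{hyp: point spectrum}: the absence of eigenvalues and of bounded solutions to $\mcl u=0$ guarantees that $\mathcal{T}(0)$ is invertible, and analyticity of the far-field ingredients gives uniform invertibility for small $\gamma$. Crucially, $\beta(\gamma)$ absorbs the $1/\gamma$ singularity of $(\mcl_+-\gamma^2)^{-1}$ coming from the pole projection $P_\mathrm{pole}$ of Lemma \ref{l: center projections pole}, so that the full combination $u(y;\gamma)$ is analytic in $\gamma$ at the origin and converges to a bounded limit $R_0 g = \chi_+ R_0^+ g_+ + w(\cdot;0) + \beta(0) e_0$.

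Once the bordered system is inverted, the estimates follow by combining Proposition \ref{p: far field resolvent expansion} for the far-field contribution with the uniform control on the exponentially localized pieces $w$ and $\beta e_0$, whose weighted norms are dominated by arbitrary algebraic weights. For the derivative estimates in the second part of the statement, odd $g\in L^\infty_{0,r}$ yields odd $g_+\in L^\infty_{r,r}$, and $\partial_x$ of $\chi_+(\mcl_+-\gamma^2)^{-1}g_+$ inherits the bounds $|\gamma|^{-1}$ and $|\gamma|^{-(r-1)}$ from Proposition \ref{p: far field resolvent expansion}, while the cutoff commutator and the exponentially localized pieces are bounded uniformly in $\gamma$. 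The main obstacle is ensuring the uniform invertibility of $\mathcal{T}(\gamma)$ and, in particular, choosing $e_0$ to match exactly the residue of $(\mcl_+-\gamma^2)^{-1}g_+$ corresponding to $P_\mathrm{pole}$; this is the step where the geometric information from Hypotheses \ref{hyp: dispersion reln} and \ref{hyp: point spectrum} enters in an essential way and makes the construction work at the critical speed. All remaining arguments reduce to the commutator computations and weighted embeddings carried out in \cite[Section 3]{CAMS}, which transfer verbatim because they use only Fredholm properties implied by our hypotheses and not the scalar nature of the equation.
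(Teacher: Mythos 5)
Your proposal is the far-field/core argument the paper defers to \cite[Section 3.1]{SIMA} and \cite[Section 3.2]{CAMS}, and the structural ingredients you identify --- odd extension $g_+$ of $\chi_+ g$, far-field ansatz $\chi_+(\mcl_+-\gamma^2)^{-1}g_+$, exponentially localized core correction $w$, a one-dimensional bordering scalar to compensate the Fredholm index $-1$ of $\mcl-\gamma^2$ on $H^2_{\mathrm{exp},\eta_-,\eta_+}$, and the use of Hypothesis \ref{hyp: point spectrum} to rule out kernel elements --- are the ones those references use. Combining Proposition \ref{p: far field resolvent expansion} with uniform control of the localized pieces, and noting that exponential decay dominates the algebraic weight $\rho_{0,r}$, then gives the stated estimates, including the derivative estimates, essentially as you outline.

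The one place you describe the mechanism incorrectly is the claim that ``$\beta(\gamma)$ absorbs the $1/\gamma$ singularity of $(\mcl_+-\gamma^2)^{-1}$ coming from the pole projection $P_\mathrm{pole}$,'' and that $e_0$ should ``match exactly the residue of $(\mcl_+-\gamma^2)^{-1}g_+$.'' On the data you use there is no residue to match: Proposition \ref{p: far field resolvent expansion} already shows that $(\mcl_+-\gamma^2)^{-1}$, applied to odd $g_+$, is \emph{uniformly bounded} with limit $R_0^+ g_+$ and $O(|\gamma|)$ error. The $1/\gamma$ poles of the individual projections $P^{\mathrm{cs/cu}}(\gamma)$ cancel in the matrix Green's function $T_\gamma$ when paired with odd forcing --- this is the Dirichlet/absorption mechanism discussed around Lemma \ref{l: sharpness}. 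The bordering $\beta e_0$ therefore plays no role in cancelling a pole; it serves only to close the Fredholm problem, since with index $-1$ and trivial kernel (Hypothesis \ref{hyp: point spectrum}) the range of $\mcl-\gamma^2$ in the core space has codimension one, and $(\mcl-\gamma^2)e_0$ is chosen to span a complement. It is this Fredholm repair, not pole cancellation, that your operator $\mathcal{T}(\gamma)$ accomplishes; with that reinterpretation, the rest of what you wrote agrees with the referenced argument.
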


\subsection{Linear time decay estimates}
The assumption that all eigenvalues of $D$ are positive guarantees that the operator $\mcl : W^{2,p} (\R) \to L^p(\R)$ is sectorial for all $1 \leq p \leq \infty$; see for instance \cite[Chapter 3]{Lunardi}. We can therefore write the linear evolution via the inverse Laplace transform
\begin{align}
	e^{\mcl t} = - \frac{1}{2 \pi i} \int_\Gamma e^{\lambda t} (\mcl - \lambda)^{-1} \, d \lambda,
\end{align}
where $\Gamma$ is a sectorial contour to the right of the essential spectrum of $\mcl$. Note that for $p = \infty$, $\mcl$ is not densely defined, and so we rely on the characterization of analytic semigroups whose generators are not densely defined in \cite{Lunardi}. In particular, strong continuity at $t = 0$ holds only after preconditioning with the resolvent, i.e. we have
\begin{align}
\lim_{t \to 0^+} (\mcl - \lambda)^{-1} e^{\mcl t} u_0 = u_0 \label{e: initial data}
\end{align}
for any $\lambda$ in the resolvent set of $\mcl$. 

The resolvent estimates of the preceding section can be translated to sharp temporal decay estimates by shifting the contour $\Gamma$ to closely follow the essential spectrum of $\mcl$ near the origin, as in \cite{CAMS, SIMA}. Precisely following the arguments of \cite[Section 4]{CAMS}, we obtain the following estimates on $e^{\mcl t}$. 

\begin{prop}[Linear estimates]\label{p: linear estimates}
	Fix $r > 2$. There exists a constant $C > 0$ such that the following estimates hold.
	\begin{enumerate}[i.]
		\item (Large time estimates) For any $z_0 \in L^1_{0,1} (\R)$, we have
		\begin{align}
			\| e^{\mcl t } z_0 \|_{L^\infty_{0, -1}} \leq \frac{C}{t^{3/2}} \| z_0 \|_{L^1_{0,1}}
		\end{align}
		for all $t > 0$, while for any $z_0 \in L^\infty_{0,r}(\R)$, we have
		\begin{align}
			\| e^{\mcl t } z_0 \|_{L^\infty_{0, -1}} \leq \frac{C}{(1+t)^{3/2}} \| z_0 \|_{L^\infty_{0,r}}, \
		\end{align}
		for all $t > 0$. 
		\item (Large time derivative estimates) For any $z_0 \in L^\infty_{0,r}(\R)$, we have
		\begin{align}
			\| \partial_x e^{\mcl t} z_0 \|_{L^\infty_{0, r}} &\leq \frac{C}{t^{3/2-r/2}} \| z_0 \|_{L^\infty_{0, r}}, \\
			\| \partial_x e^{\mcl t} z_0 \|_{L^1_{0,1}} &\leq \frac{C}{t^{1/2}} \| z_0 \|_{L^\infty_{0, r}}
		\end{align}
		for all $t > 0$. 
		\item (Small time regularity estimates) For any $z_0 \in L^1(\R)$, we have
		\begin{align}
			\| e^{\mcl t} \|_{L^\infty} \frac{C}{t^{1/2}} \| z_0 \|_{L^1} 
		\end{align}
		for $0 < t < 2$. For any $z_0 \in L^\infty_r(\R)$, we have
		\begin{align}
			\| \partial_x e^{\mcl t} \|_{L^\infty_{0,r}} \leq \frac{C}{t^{1/2}} \| z_0 \|_{L^\infty_{0, r}}, 
		\end{align}
		for $0 < t < 2$. For any $z_0 \in L^1_{0,1} (\R)$, we have
		\begin{align}
					\| \partial_x e^{\mcl t} \|_{L^1_{0,1}} \leq \frac{C}{t^{1/2}} \| z_0 \|_{L^1_{0, 1}}, 
		\end{align}
		for $0 < t < 2$. 
	\end{enumerate}
\end{prop}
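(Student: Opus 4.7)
The plan is to represent the semigroup via the inverse Laplace transform
\[
e^{\mcl t} = -\frac{1}{2\pi i} \int_\Gamma e^{\lambda t} (\mcl-\lambda)^{-1}\,d\lambda
\]
and deform $\Gamma$ to extract sharp temporal decay from the resolvent estimates of Proposition \ref{p: full resolvent estimates}. By Hypotheses \ref{hyp: dispersion reln}--\ref{hyp: point spectrum}, the essential spectrum of $\mcl$ near the origin is the parabolic curve $\{\lambda : d_{c_*}(\lambda,ik+\nu_*)=0, k\in\R\}$, tangent to the imaginary axis at $\lambda=0$, and there is no other unstable spectrum in the weighted norm. I will choose $\Gamma=\Gamma_\mathrm{in}\cup\Gamma_\mathrm{out}$ where $\Gamma_\mathrm{in}$ tightly hugs the parabolic boundary of essential spectrum inside a small disk $B(0,\delta_0)$, and $\Gamma_\mathrm{out}$ is a sectorial tail satisfying $\Re\lambda\leq -\delta_1$ for some $\delta_1>0$. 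The outer piece contributes an exponentially small $e^{-\delta_1 t}$ term, bounded by the classical sectorial resolvent estimate on $\mcl$, and so is absorbed into any of the claimed large-time bounds.

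The substance lies in $\Gamma_\mathrm{in}$. There I parameterize by the spectral square-root variable $\gamma$ with $\lambda=\gamma^2$, which straightens the parabolic branch cut arising from the double root. Under this substitution $d\lambda = 2\gamma\,d\gamma$, and the $2\gamma$ exactly cancels the $1/\gamma$ pole of the spectral projections identified in Lemma \ref{l: center projections pole}. Inserting the expansion $(\mcl-\gamma^2)^{-1}g = R_0 g + \mathrm{O}_{L^\infty_{0,-1}}(|\gamma|\|g\|_{L^1_{0,1}})$ from Proposition \ref{p: full resolvent estimates}, the integral splits into a stationary contribution from $R_0 g$ and a corrector. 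The stationary part integrates to zero modulo boundary contributions by oddness of the contour in $\Im\gamma$ after pairing opposite sides of the parabola. The $\mathrm{O}(|\gamma|)$ corrector, together with the extra factor of $2\gamma$ from $d\lambda$, contributes
\[
\left\|\int_{\Gamma_\mathrm{in}} e^{\gamma^2 t}\cdot\mathrm{O}(\gamma^2)\,d\gamma\right\| \lesssim \int_\R e^{-c k^2 t}\cdot k^2\,dk \sim t^{-3/2},
\]
where I have used that $\Re\gamma^2\leq -ck^2$ on the contour for $|\Im\gamma|=k$ small, giving the claimed $L^1_{0,1}\to L^\infty_{0,-1}$ estimate. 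The variant with $L^\infty_{0,r}$ data for $t\geq 1$ uses the analogous resolvent bound together with a standard small-$t$ regularization step to obtain the $(1+t)^{-3/2}$ rate. For the derivative estimates in (ii), I will use the $\partial_x$ resolvent bounds of Proposition \ref{p: full resolvent estimates}: the bound $|\gamma|^{-(r-1)}$ combined with $d\lambda=2\gamma\,d\gamma$ and $\gamma\sim t^{-1/2}$ on the parabolic contour gives rate $t^{-3/2+r/2}$, while the $L^1_{0,1}$ derivative estimate uses the $|\gamma|^{-1}$ bound and yields $t^{-1/2}$.

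For the small-time estimates in (iii), I will appeal directly to sectoriality of $\mcl$ on $L^p(\R)$, which holds for all $1\leq p\leq\infty$ since $D$ has strictly positive eigenvalues, and invoke the standard analytic semigroup bound $\|\partial_x e^{\mcl t}\|_{L^p\to L^p}\leq Ct^{-1/2}$ for $0<t<2$. The $L^1\to L^\infty$ smoothing follows from the one-dimensional parabolic Gaussian kernel. For $p=\infty$ the operator is not densely defined, which I handle as in \cite{CAMS} through the resolvent regularization \eqref{e: initial data} together with the identity $e^{\mcl t}(\mcl-\lambda)^{-1}=(\mcl-\lambda)^{-1}e^{\mcl t}$. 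The only real obstacle is guaranteeing that the contour arguments in (i)--(ii) and the sectorial arguments in (iii) can be glued at $t=1$ or so without logarithmic losses; this is arranged by choosing $\delta_0$ independent of $t$ and using a unified sectorial tail $\Gamma_\mathrm{out}$ for both regimes, exactly as in \cite[Section 4]{CAMS}. Since every step reduces to resolvent bounds on $\mcl$ that we have already established for multi-component systems, the scalar-equation arguments of \cite{CAMS} transfer verbatim.
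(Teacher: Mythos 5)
Your proposal is correct and takes essentially the same approach as the paper: the paper's proof of Proposition \ref{p: linear estimates} is a single line, deferring entirely to the contour-deformation argument of \cite[Section 4]{CAMS}, and your sketch accurately reconstructs that argument (inverse Laplace transform, sectorial tail plus a contour hugging the essential spectrum near the origin, the $\lambda=\gamma^2$ substitution to straighten the branch point, insertion of the $R_0$ expansion from Proposition \ref{p: full resolvent estimates}, the $\int e^{-ck^2 t}k^2\,dk\sim t^{-3/2}$ computation, the derivative bounds via the $|\gamma|^{-1}$ and $|\gamma|^{-(r-1)}$ resolvent estimates, and sectoriality for small times). One cosmetic remark: the $R_0$ term is eliminated not so much by an ``oddness'' argument as by the fact that $\gamma\,d\gamma=\tfrac12\,d(\gamma^2)$ is exact, so $\int_\Gamma e^{\gamma^2 t}\gamma\,d\gamma$ reduces to endpoint contributions that vanish when $\Re\gamma^2\to-\infty$, but this does not affect the correctness of the outcome.
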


\section{Stability argument and consequences for front propagation}\label{s: stability argument}
We prove Theorem \ref{t: main} by obtaining global in time control of perturbations to the approximate solution $v^\mathrm{app}$, which was constructed in Sections \ref{s: colinear} in the case where $u_0$ and $u_1$ are co-linear and in Section \ref{s: linearly independent} in the case where $u_0$ and $u_1$ are linearly independent. Having established good estimates on the residual error of $v^\mathrm{app}$ in Proposition \ref{p: colinear residual} and Proposition \ref{p: lin ind residual estimate}, and sharp estimates on $e^{\mcl t}$ in either case in Proposition \ref{p: linear estimates}, the proof of Theorem \ref{t: main} now follows \emph{exactly} as in \cite{CAMS}. For completeness, we sketch the proof here. The argument from here on is identical regardless of whether $u_0$ and $u_1$ are co-linear or linearly independent, and so we no longer distinguish between these cases.

\subsection{Stability argument}
We let $v$ solve $F_\mathrm{res} [v] = 0$, which is the original equation \eqref{e: rd} in the moving frame accounting for the linear spreading speed and the logarithmic delay, and after conjugation with the exponential weight $\omega$. We let $v^\mathrm{app}$ be the approximate solution constructed in either Section \ref{s: colinear} or Section \ref{s: linearly independent}. We then define the perturbation $w = v - v^\mathrm{app}$, which satisfies
\begin{multline}
	w_t = \mcl w - f'(q_*) w - \frac{3}{2 \eta_* (t+T)} [ \omega (\omega^{-1})' w + w_y] + \omega f(\omega^{-1} w + \omega^{-1} v^\mathrm{app}) - \omega f(\omega^{-1} v^\mathrm{app}) - R, \label{e: w eqn}
\end{multline}
where $R(y,t) = F_\mathrm{res}[v^\mathrm{app}](y,t)$. Note that we have added and subtracted $f'(q_*)w$ so that the principal linear part of this equation can be written as $\mcl w$. We want to view this equation as a perturbation of $w_t = \mcl w$ and aim to control all other terms via a nonlinear iteration argument on the associated variation of constants formula. 

We define
\begin{align}
N(\omega^{-1} w; y, t) = f(\omega^{-1} (w + v^\mathrm{app})) - f(\omega^{-1} v^\mathrm{app}) - f'(\omega^{-1} v^\mathrm{app}) \omega^{-1} w, 
\end{align}
so that we may rewrite \eqref{e: w eqn} as 
\begin{align}
w_t = \mcl w -  \frac{3}{2 \eta_* (t+T)} [ \omega (\omega^{-1})' w + w_y] + (f'(\omega^{-1} v^\mathrm{app}) - f'(q_*)) w - R + \omega N(\omega^{-1} w). \label{e: w eqn rewritten}
\end{align}
We usually write $N(\omega^{-1} w; y, t) = N(\omega^{-1} w)$, suppressing the explicit dependence on time and space. Note that $\omega^{-1} v^\mathrm{app}$ is uniformly bounded, so by Taylor's theorem there exists non-decreasing function $K: \R_+ \to \R_+$ such that 
\begin{align}
|\omega N(\omega^{-1} w)| \leq K(B) \omega^{-1} | w|^2 \label{e: nonlinearity Taylors thm}
\end{align}
provided $\|\omega^{-1} w\|_{L^\infty} \leq B.$ 
We also have
\begin{align}
\| (f'(\omega^{-1} v^\mathrm{app}) - f'(q_*)) w \| \leq C | \omega^{-1} v^\mathrm{app}-q_* | |w|
\end{align}
by smoothness of $f$ together with Corollaries \ref{c: colinear front approximated by vapp} and \ref{c: lin ind front approximated by vapp}, which imply that $| \omega^{-1} v^\mathrm{app} - q_*| \ll 1$ provided $T$ is large. 

The difficulty with viewing \eqref{e: w eqn rewritten} as a perturbation of $w_t = \mcl w$ is that the term $-\frac{3}{2 \eta_* (t+T)} \omega (\omega^{-1})' w$ is \emph{critical}, and prevents solutions even with localized initial data from decaying in time, while localized solutions to $w_t = \mcl w$ decay with rate $t^{-3/2}$. In order to be able to close a perturbative argument, we make the change of variables $z (y,t) = (t+T)^{-3/2} w(y,t)$ as in \cite{CAMS}, and find that $z$ solves
\begin{multline}
	z_t = \mcl z - \frac{3}{2 \eta_* (t+T)} [\omega (\omega^{-1})' + \eta_*] z - \frac{3}{2 \eta_* (t+T)} z_x + (f'(\omega^{-1} v^\mathrm{app}) - f'(q_*))z- (t+T)^{-3/2} R \\ + (t+T)^{-3/2} \omega N (\omega^{-1} (t+T)^{3/2} z). \label{e: z eqn}
\end{multline}
We note that the equation \eqref{e: z eqn} is locally well-posed in, for instance, $L^\infty_{0,-1}(\R)$ by standard theory of semilinear parabolic equations \cite{Henry, Lunardi}, with the understanding that the initial data is attained in the sense of \eqref{e: initial data}. 

The term $-\frac{3}{2 \eta_* (t+T)} [\omega (\omega^{-1})' + \eta_*] z$ is now supported only for $x \leq 1$, since $\omega(x) (\omega^{-1})'(x) = - \eta_*$ for $x \geq 1$. Since the dynamics of \eqref{e: z eqn} are dominated by the behavior as $x \to \infty$, this term is essentially removed from the equation, and in particular no longer obstructs decay. We are then able to prove, following \cite[Section 5]{CAMS}, that $\| z(t) \|_{L^\infty_{0, -1}} \sim (t+T)^{-3/2}$, so that $w = (t+T)^{3/2} z$ remains small for all time. 

The stability argument for the $z$-equation is not simple, even though we have removed the most critical term by changing from the $w$ to the $z$ equation. For instance, the nonlinear terms in the $z$ equation roughly satisfy
\begin{align*}
	(t+T)^{-3/2} \omega N(\omega^{-1} (t+T)^{3/2} z ) \sim (t+T)^{3/2} \omega^{-1} z^2,
\end{align*}
so that the nonlinearity carries a temporally growing coefficient and hence is now marginally relevant, since it is of the same order as $z$ if $z$ has the expected decay rate $z \sim (t+T)^{-3/2}$. Simultaneously handling this as well as the linear term $-\frac{3}{2 \eta_*(t+T)} z_x$, which is difficult to treat perturbatively since it does not gain any spatial localization, requires several bootstrap steps. In the end, we close the argument by controlling the norm template function
\begin{multline}
\Theta(t) = \sup_{0 < s < t} \bigg[ (s+T)^{3/2} \| z(s) \|_{L^\infty_{0, -1}} + 1_{\{0< s < 1\}} T^{3/2} s^{1/2} \bigg(  \| z_x (s)\|_{L^1_{0,1}} + \| z_x (s) \|_{L^\infty_{0, r}} \bigg) \\ + 1_{\{s \geq 1\}} T^{1/2} \bigg( (s+T)^{1/2} \| z_x (s)\|_{L^1_{0,1}} + (s+T)^\beta \| z_x \|_{L^\infty_{0,r}} \bigg) \bigg],
\end{multline}
where $r = 2 + \mu$ with $ 0 < \mu < \frac{1}{8}$ fixed, and $\beta = \frac{1}{2} - \frac{\mu}{2}$. Following the argument of \cite{CAMS}, we arrive at the following control of $\Theta(t)$.

\begin{prop}\label{p: theta control}
	Let $r = 2 + \mu$ with $0 < \mu < \frac{1}{8}$, and let $z$ solve \eqref{e: z eqn} with initial data $z_0 \in L^\infty_{0, r}(\R)$. Let $t_* \in (0, \infty]$ denote the maximal time of existence of $z(t)$ in $L^\infty_{0, -1}(\R)$. Define
	\begin{align}
	R_0 = \sup_{T\geq T_*} \sup_{s > 0} (s+T)^{1/2-4\mu} \| R(s; T) \|_{L^\infty_{0, r}},\label{e: R0 def}
	\end{align}
	which is finite for some $T_*$ sufficiently large by Corollary \ref{c: colinear normal form residual} in the co-linear case and Proposition \ref{p: lin ind residual estimate} in the linearly independent case. 
	There exist constants $C_0, C_1$, and $C_2$ independent of $z_0$ such that 
	\begin{align}
	\Theta(t) \leq C_0 \left( T^{3/2} \| z_0 \|_{L^\infty_{0,r}} + T^{-1/2+4\mu} R_0 \right) + \frac{C_1}{T^{1/2-4\mu}} \Theta(t) + C_2 K (B \Theta(t)) \Theta(t)^2
	\end{align}
	for all $t > 0$, and $T$ sufficiently large, where $B = \| \rho_{0,1} \omega^{-1} \|_{L^\infty}$ and $K$ is defined by \eqref{e: nonlinearity Taylors thm}. 
\end{prop}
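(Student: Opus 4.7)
The plan is to apply Duhamel's formula to the $z$-equation \eqref{e: z eqn} and then estimate each resulting term in the norms comprising $\Theta(t)$, using the sharp linear estimates of Proposition \ref{p: linear estimates}. Writing
\begin{align}
z(t) = e^{\mcl t} z_0 + \int_0^t e^{\mcl(t-s)} \left[ \mathcal{T}[z](s) + \mathcal{V}[z](s) - (s+T)^{-3/2} R(s) + \mathcal{N}[z](s) \right] ds,
\end{align}
where $\mathcal{T}[z] = - \frac{3}{2\eta_*(t+T)}[\omega(\omega^{-1})' + \eta_*] z - \frac{3}{2\eta_*(t+T)} z_x$ collects the linear perturbation terms, $\mathcal{V}[z] = (f'(\omega^{-1} v^\mathrm{app}) - f'(q_*)) z$ is the potential perturbation, and $\mathcal{N}[z] = (t+T)^{-3/2} \omega N(\omega^{-1}(t+T)^{3/2} z)$ is the effective nonlinearity.

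For the homogeneous part, Proposition \ref{p: linear estimates}(i) gives $\|e^{\mcl t} z_0\|_{L^\infty_{0,-1}} \lesssim (1+t)^{-3/2} \|z_0\|_{L^\infty_{0,r}}$, producing the $T^{3/2} \|z_0\|_{L^\infty_{0,r}}$ contribution after multiplying by $(t+T)^{3/2}$. The derivative estimates in Proposition \ref{p: linear estimates}(ii),(iii) likewise handle the $\|z_x\|_{L^1_{0,1}}$ and $\|z_x\|_{L^\infty_{0,r}}$ components of $\Theta$. The residual term $-(s+T)^{-3/2} R(s)$ is controlled using \eqref{e: R0 def}, which bounds its $L^\infty_{0,r}$ norm by $R_0 (s+T)^{-2+4\mu}$; inserting this into the Duhamel integral and integrating against the semigroup bounds yields the factor $T^{-1/2+4\mu} R_0$. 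The potential term $\mathcal{V}[z]$ is small because Corollaries \ref{c: colinear front approximated by vapp} and \ref{c: lin ind front approximated by vapp} give $\|\omega^{-1} v^\mathrm{app} - q_*\|_{L^\infty_{0,r}} \leq C (t+T)^{-1/2+4\mu}$, so $|\mathcal{V}[z]| \leq C(s+T)^{-1/2+4\mu} \rho_{0,-r}(y) \|z(s)\|_{L^\infty_{0,-1}}$, and after feeding this through the Duhamel integral one absorbs a factor of $T^{-(1/2-4\mu)}$ times $\Theta(t)$, producing the $C_1 T^{-(1/2-4\mu)} \Theta(t)$ term.

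The main obstacle is the transport piece $-\frac{3}{2\eta_*(s+T)} z_x$ in $\mathcal{T}[z]$, which is genuinely first order and gains no spatial localization upon differentiation --- this is precisely the reason $\Theta$ includes norms of $z_x$ in both $L^1_{0,1}$ and $L^\infty_{0,r}$ weights. The Duhamel contribution of this term is bounded by $\int_0^t (s+T)^{-1} \|z_x(s)\|_{L^1_{0,1}} \cdot (\text{linear kernel}) \, ds$, which via the sharp $L^1 \to L^\infty_{0,-1}$ and derivative estimates of Proposition \ref{p: linear estimates} leaves a factor of $T^{-(1/2-4\mu)} \Theta(t)$ (with the exponent arising from the balance between the $1/(s+T)$ decay, the $(s+T)^{-1/2}$ bound built into $\Theta$ on $\|z_x\|_{L^1_{0,1}}$, and the semigroup kernel). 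Similar bootstrap estimates control the contribution to the $\|z_x\|$ parts of $\Theta(t)$, using the small-time regularity estimates of Proposition \ref{p: linear estimates}(iii) to handle the endpoint singularity at $s = t$. The localized indicator piece of $\mathcal{T}[z]$ (supported on $\{y \leq 1\}$) is treated as an $L^1_{0,1}$ source whose $L^1$ norm is controlled by $(s+T)^{-1}\|z(s)\|_{L^\infty_{0,-1}}$ and therefore contributes similarly.

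Finally, the nonlinearity is estimated via \eqref{e: nonlinearity Taylors thm}: $|\mathcal{N}[z](y,s)| \leq (s+T)^{-3/2} K(B (s+T)^{3/2} \|\omega^{-1} z(s)\|_{L^\infty}) \omega^{-1}(y) (s+T)^{3/2} |z(y,s)|^2$. Since $\|\omega^{-1} z\|_{L^\infty} \leq B \|z\|_{L^\infty_{0,-1}} \leq B (s+T)^{-3/2} \Theta(t)$, the argument of $K$ is bounded by $B \Theta(t)$, and $\mathcal{N}[z]$ carries an $L^\infty_{0,r}$-bound of order $(s+T)^{-3+2\mu} K(B\Theta(t)) \Theta(t)^2$ (after using spatial localization of $\omega^{-1}$ to recover algebraic weight). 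Feeding this into Duhamel and integrating produces the $C_2 K(B\Theta(t)) \Theta(t)^2$ term. Collecting all the estimates gives the claimed bound. The delicate point throughout is that the arguments of $K$ and the various temporal decay rates must balance precisely so that taking $T$ large makes the coefficient $C_1 T^{-(1/2-4\mu)}$ small --- this is what eventually lets a continuity argument close the estimate and yield $\Theta(t) \lesssim T^{3/2}\|z_0\|_{L^\infty_{0,r}} + T^{-(1/2-4\mu)} R_0$ for all $t < t_*$.
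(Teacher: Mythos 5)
Your proposal correctly reconstructs the Duhamel/bootstrap argument that the paper defers to \cite[Proposition 5.2]{CAMS} --- the decomposition into homogeneous, residual, potential, transport, and nonlinear contributions, the role of the $L^1_{0,1}$ and $L^\infty_{0,r}$ norms of $z_x$ in the template function for controlling the non-localizing transport term, and the use of Proposition \ref{p: linear estimates} together with \eqref{e: R0 def} and the $v^\mathrm{app}$--$q_*$ comparison corollaries. This is precisely the argument the paper invokes by citation, so the approach is the same.
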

\begin{proof}
	Note that \eqref{e: z eqn} has the exact same form as \cite[equation (5.4)]{CAMS}. The proof of \cite[Proposition 5.2]{CAMS} relies only on estimates on $R$, $| \omega^{-1} v^\mathrm{app} - q_*|$ and $e^{\mcl t}$ which are here encoded in Corollary \ref{c: colinear normal form residual}, Proposition \ref{p: lin ind residual estimate}, Corollary \ref{c: colinear front approximated by vapp}, Corollary \ref{c: lin ind front approximated by vapp}, and Proposition \ref{p: linear estimates}. We then obtain the proposition by applying exactly the proof of \cite[Proposition 5.2]{CAMS}. 
\end{proof}

Applying a standard nonlinear iteration argument (see for instance \cite[Section 5.4]{CAMS}), we then obtain the following control of $z(t)$.

\begin{corollary}
	Fix $0 < \mu < \frac{1}{8}$ and let $r = 2 + \mu$. Let $R_0$ be defined by \eqref{e: R0 def}, and let $T \geq T_*$ so that $R_0$ is finite. There exist positive constants $C$ and $\eps$ such that for all $z_0 \in L^\infty_{0, r} (\R)$ with 
	\begin{align}
	T^{3/2} \| z_0 \|_{L^\infty_{0,r}} + T^{-1/2 + 4 \mu} R_0 < \eps,
	\end{align}
	then the solution $z(t)$ to \eqref{e: z eqn} with initial data $z_0$ exists globally in time in $L^\infty_{0,-1}(\R)$ and satisfies
	\begin{align}
	\| z(t) \|_{L^\infty_{0, -1}} \leq \frac{C}{(t+T)^{3/2}} \left( T^{3/2} \|z_0 \|_{L^\infty_{0,r}} + T^{-1/2 + 4 \mu} R_0 \right). 
	\end{align}
\end{corollary}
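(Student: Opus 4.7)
The plan is to deduce this corollary from Proposition \ref{p: theta control} via a standard continuation/bootstrap argument, following the template of \cite[Section 5.4]{CAMS}. Write $E := T^{3/2}\|z_0\|_{L^\infty_{0,r}} + T^{-1/2+4\mu} R_0$, so the hypothesis is $E < \eps$ and the desired conclusion $\|z(t)\|_{L^\infty_{0,-1}} \leq CE/(t+T)^{3/2}$ will follow as soon as we establish a uniform-in-time bound of the form $\Theta(t) \leq C' E$, because the first term in the definition of $\Theta$ dominates the left-hand side.

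First I would enlarge $T_*$ if necessary so that $C_1/T^{1/2-4\mu} \leq 1/2$ for every $T \geq T_*$. With this choice the estimate from Proposition \ref{p: theta control} rearranges to
\begin{align*}
\Theta(t) \leq 2 C_0 E + 2 C_2\, K(B\Theta(t))\, \Theta(t)^2
\end{align*}
for every $t$ in the interval of existence of $z$ in $L^\infty_{0,-1}(\R)$. For the bootstrap I set $M := 8 C_0 E$ and make the provisional assumption $\Theta(t) \leq M$. Since $K$ is non-decreasing, the nonlinear term is then bounded by $2 C_2\, K(BM)\, M \cdot \Theta(t) \leq 16\, C_0 C_2\, K(8 B C_0 \eps)\, \eps \cdot \Theta(t)$. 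Since $\eps \mapsto \eps\, K(8 B C_0 \eps)$ tends to zero as $\eps \to 0$, I may shrink $\eps$ so that this coefficient is at most $1/2$; absorbing the term into the left-hand side yields the strict improvement $\Theta(t) \leq 4 C_0 E < M$.

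The final step is continuation. The norms entering the template $\Theta$ are continuous functions of $t$ for solutions of the semilinear parabolic equation \eqref{e: z eqn} (with initial data attained in the sense \eqref{e: initial data}), so $\Theta$ is continuous and non-decreasing; moreover $\Theta(0^+) \leq C_0 E < M$ by a short-time estimate on the linear semigroup applied to $z_0 \in L^\infty_{0,r}(\R)$. The set $\{t \in [0, t_*) : \Theta(t) \leq M\}$ is therefore non-empty, open (by the strict inequality above), and closed, hence equals $[0, t_*)$. From the resulting uniform bound $\|z(t)\|_{L^\infty_{0,-1}} \leq 4 C_0 E/(t+T)^{3/2}$ the blow-up alternative gives $t_* = \infty$, and the corollary follows with $C = 4 C_0$. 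The substantive work is packaged inside Proposition \ref{p: theta control}; once it is available, the bootstrap here is a purely soft argument and, as the author remarks, transfers verbatim from the scalar case, so I do not expect any genuine obstacle in this final step.
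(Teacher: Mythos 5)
Your proposal is correct and is precisely the standard bootstrap/continuation argument that the paper invokes (without writing it out) by the single sentence "Applying a standard nonlinear iteration argument (see for instance \cite[Section 5.4]{CAMS})." The absorption of the $C_1 T^{-1/2+4\mu}\Theta$ term by taking $T_*$ large, the closing of the quadratic term by taking $\eps$ small, and the open-closed continuation on $[0,t_*)$ together with the blow-up alternative are exactly the intended content, and once Proposition \ref{p: theta control} is in place these steps transfer verbatim from the scalar case.
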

Note that $T^{3/2} z_0 = w_0$, so assuming smallness of $T^{3/2} z_0$ is just assuming smallness of $w_0$. Indeed, undoing the change of variables $z = (t+T)^{-3/2} w$, we obtain the following control of $w$. 
\begin{corollary}\label{c: w control}
	Fix $0 < \mu < \frac{1}{8}$, let $r = 2 + \mu$, and assume $T \geq T_*$ with $T_*$ as in Proposition \ref{p: theta control}. There exist positive constants $C$ and $\eps$ such that if 
	\begin{align}
	\| w_0 \|_{L^\infty_{0, r}} + T^{-1/2+4\mu} R_0 < \eps,
	\end{align}
	then the solution $w(y,t)$ to \eqref{e: w eqn rewritten} exists for all positive time and satisfies
	\begin{align}
	\| w(t) \|_{L^\infty_{0, -1}} \leq C \left(\| w_0 \|_{L^\infty_{0, r}} + T^{-1/2 + 4 \mu} R_0 \right)
	\end{align}
	for all $t > 0$. 
\end{corollary}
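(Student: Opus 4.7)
The plan is to deduce this statement directly from the preceding Corollary (the one controlling $z(t)$) by undoing the change of variables $z(y,t) = (t+T)^{-3/2} w(y,t)$ that was introduced just before Proposition \ref{p: theta control}. Since that Corollary already establishes the global existence and decay of $z$, essentially all that remains here is bookkeeping: multiply through by $(t+T)^{3/2}$ and check that the smallness hypothesis matches up.

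First, I would translate the smallness hypothesis. Evaluating the change of variables at $t = 0$ gives $z_0(y) = T^{-3/2} w_0(y)$, which yields the identity
\[
T^{3/2} \| z_0 \|_{L^\infty_{0,r}} = \| w_0 \|_{L^\infty_{0,r}},
\]
so the smallness assumption $\| w_0 \|_{L^\infty_{0,r}} + T^{-1/2 + 4\mu} R_0 < \eps$ is precisely the smallness hypothesis of the preceding Corollary applied to $z_0$. Second, I would transfer global existence. Since $w \mapsto z$ amounts to multiplication by a strictly positive time-dependent scalar, the space $L^\infty_{0,-1}(\R)$ is preserved under the transformation and $w$ is an $L^\infty_{0,-1}(\R)$-valued solution to \eqref{e: w eqn rewritten} on an interval if and only if $z$ is an $L^\infty_{0,-1}(\R)$-valued solution to \eqref{e: z eqn} on the same interval. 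Therefore global existence of $z$ provided by the preceding Corollary immediately yields global existence of $w$.

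Finally, I would combine the bound
\[
\| z(t) \|_{L^\infty_{0,-1}} \leq \frac{C}{(t+T)^{3/2}} \left( T^{3/2} \| z_0 \|_{L^\infty_{0,r}} + T^{-1/2 + 4\mu} R_0 \right)
\]
with the identity $\| w(t) \|_{L^\infty_{0,-1}} = (t+T)^{3/2} \| z(t) \|_{L^\infty_{0,-1}}$. The factors of $(t+T)^{3/2}$ cancel exactly, and after substituting $T^{3/2}\| z_0 \|_{L^\infty_{0,r}} = \| w_0 \|_{L^\infty_{0,r}}$ on the right-hand side one obtains the claimed bound. There is no substantive obstacle at this stage: the entire purpose of introducing $z$ was to convert the non-decaying quantity $w$ into the decaying quantity $z$ with sharp rate $(t+T)^{-3/2}$, and the powers of $T$ and $(t+T)$ in the preceding Corollary are tuned precisely so that multiplying by $(t+T)^{3/2}$ produces a clean, time-independent bound on $w$.
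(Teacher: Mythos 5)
Your proposal is correct and coincides with the paper's own argument: the paper simply notes that $T^{3/2} z_0 = w_0$ and that undoing the change of variables $z = (t+T)^{-3/2} w$ converts the decay estimate on $z$ into the stated uniform bound on $w$. Your bookkeeping of the smallness hypothesis, the transfer of global existence through the positive scalar rescaling, and the cancellation of $(t+T)^{3/2}$ are all exactly what the paper intends.
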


\subsection{Consequences for front propagation --- proof of Theorem \ref{t: main}}
The proof of Theorem \ref{t: main} from this point is identical to \cite[Section 6]{CAMS}, but we reproduce it for completeness. Recall that $w$ is defined through
\begin{align}
\omega (y) U(y,t) = v^\mathrm{app}(y,t; T) + w(y,t),
\end{align}
where $U(y,t)$ solves \eqref{e: U eqn}, which is just the original equation \eqref{e: rd} in the moving frame defined by \eqref{e: y def}. The control of $w$ in Corollary \ref{c: w control} then translates to the following description of $U$.
\begin{corollary}\label{c: U control}
	Fix $0 < \mu < \frac{1}{8}$, let $r = 2 + \mu$, and assume $T \geq T_*$ with $T_*$ as in Proposition \ref{p: theta control}. Let $U(y,t)$ solve \eqref{e: U eqn} with initial data $U_0$. There exist positive constants $C$ and $\eps$ such that if 
	\begin{align}
	\| \omega U(\cdot, t) - v^\mathrm{app} (\cdot, 0; T) \|_{L^\infty_{0, r}} + T^{-1/2 + 4\mu} R_0 < \eps, 
	\end{align}
	then 
	\begin{align}
	\| \omega U(\cdot, t) - v^\mathrm{app}(\cdot, t; T) \|_{L^\infty_{0, -1}} \leq C \left( \| \omega U_0 - v^\mathrm{app}(\cdot, 0; T) \|_{L^\infty_{0, r}} + T^{-1/2 + 4 \mu} R_0 \right) 
	\end{align}
	for all $t > 0$. 
\end{corollary}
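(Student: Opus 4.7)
The plan is to transfer Corollary \ref{c: w control} directly via the change of variables $v(y,t) = \omega(y) U(y,t)$. Since $U$ solves \eqref{e: U eqn}, the weighted quantity $v$ satisfies $F_\mathrm{res}[v] = 0$, where $F_\mathrm{res}$ is the operator defined in \eqref{e: F res def}. In other words, $v$ is an exact solution of the nonlinear evolution in which $v^\mathrm{app}$ has residual error $R = F_\mathrm{res}[v^\mathrm{app}]$ controlled by Proposition \ref{p: colinear residual} (co-linear case) or Proposition \ref{p: lin ind residual estimate} (linearly independent case).

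Next I set $w(y,t) = v(y,t) - v^\mathrm{app}(y,t;T) = \omega(y) U(y,t) - v^\mathrm{app}(y,t;T)$. By construction, $w$ satisfies the perturbation equation \eqref{e: w eqn rewritten}, with the nonlinearity obeying the Taylor estimate \eqref{e: nonlinearity Taylors thm} because $\omega^{-1} v^\mathrm{app}$ is uniformly bounded by Corollary \ref{c: colinear front approximated by vapp} or Corollary \ref{c: lin ind front approximated by vapp}. The initial perturbation is $w_0 = \omega U_0 - v^\mathrm{app}(\cdot, 0; T)$, and the smallness hypothesis
\begin{align*}
\| \omega U_0 - v^\mathrm{app}(\cdot, 0; T) \|_{L^\infty_{0, r}} + T^{-1/2+4\mu} R_0 < \eps
\end{align*}
is exactly the hypothesis $\| w_0 \|_{L^\infty_{0, r}} + T^{-1/2+4\mu} R_0 < \eps$ of Corollary \ref{c: w control}.

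Applying Corollary \ref{c: w control}, we obtain a global-in-time solution $w \in L^\infty_{0,-1}(\R)$ satisfying
\begin{align*}
\| w(t) \|_{L^\infty_{0,-1}} \leq C \left( \| w_0 \|_{L^\infty_{0, r}} + T^{-1/2 + 4 \mu} R_0 \right).
\end{align*}
Unwinding the definition $w(\cdot, t) = \omega U(\cdot, t) - v^\mathrm{app}(\cdot, t; T)$ yields the conclusion. There is no real obstacle here: the statement is a direct repackaging of the perturbation control obtained in Corollary \ref{c: w control} in terms of the original weighted solution $\omega U$ rather than the perturbation variable $w$. The only minor bookkeeping is to confirm that the change of variables $v = \omega U$ preserves the relevant regularity and that $U(\cdot, t)$ inherits global existence from that of $w$ in $L^\infty_{0,-1}(\R)$, both of which are immediate from the smoothness of $\omega$ and of $v^\mathrm{app}$.
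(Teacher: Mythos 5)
Your proof is correct and is exactly the paper's intended argument: the paper states Corollary~\ref{c: U control} as an immediate translation of Corollary~\ref{c: w control} under the identification $w = \omega U - v^\mathrm{app}$, with no further work needed. You also implicitly (and correctly) read the hypothesis $\| \omega U(\cdot, t) - v^\mathrm{app}(\cdot, 0; T)\|_{L^\infty_{0,r}}$ as referring to the initial time, i.e.~$\| \omega U_0 - v^\mathrm{app}(\cdot, 0; T)\|_{L^\infty_{0,r}}$, which is a minor typo in the statement.
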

\begin{proof}[Proof of Theorem \ref{t: main}]
	Fix $\eps > 0$ small enough so that Corollary \ref{c: U control} holds. Then choose $T \geq T_*$ large enough so that $T^{-1/2 + 4 \mu} R_0 < \frac{\eps}{4}$. Define
	\begin{align}
	\mathcal{U}_\eps = \left\{ U_0 : \omega U_0 \in L^\infty_{0, r} (\R) \text{ with } \| \omega U_0 - v^\mathrm{app}(0,0;T) \|_{L^\infty_{0, r}} + T^{-1/2+4 \mu} R_0 < \frac{\eps}{2} \right\}. \label{e: U eps def}
	\end{align}
	Notice that $\mathcal{U}_\eps$ is clearly open in the norm $\| U_0 \| = \| \rho_{0,r} \omega U_0 \|_{L^\infty}$. To verify that $\mathcal{U}_\eps$ contains steep initial data, we define
	\begin{align}
	U_0^*(y) = \begin{cases}
	\frac{1}{\omega(y)} v^\mathrm{app} (y,t), & y < T^{1/2 + \mu} - y_0, \\
	0, & y \geq T^{1/2+\mu} - y_0. 
	\end{cases}
	\end{align}
	From the analysis of Sections \ref{s: colinear} and \ref{s: linearly independent}, we recall that 
	\begin{align*}
	|v^\mathrm{app} (y,t)| \leq C e^{-y^2/[8 \Deff (t+T)]} 
	\end{align*}
	for $y \geq (t+T)^\mu$, which implies that
	\begin{align*}
	\| \omega U_0^* (\cdot) - v^\mathrm{app} (\cdot, 0; T) \|_{L^\infty_{0, r}} \leq C T^{(\frac{1}{2} + \mu)(2 + \mu)} e^{-c_1 T^{2\mu}}
	\end{align*}
	for some constants $C, c_1 > 0$. Hence, provided $T$ is sufficiently large, we have $U_0^* \in \mathcal{U}_\eps$. 
	
	Let $u$ solve the original system \eqref{e: rd} with initial data $u_0$, and recall that $u(y+\tilde{\sigma}_T (t), t) = U(y,t)$, with 
	\begin{align}
	\tilde{\sigma}_T (t) = c_* t - \frac{3}{2 \eta_*} \log (t+T) + \frac{3}{2 \eta_*} \log T. 
	\end{align}
	Applying Corollary \ref{c: U control}, we then obtain
	\begin{align}
	\sup_{y \in \R} \left| \rho_{0,-1} (y) \omega (y) \left( u(y+\tilde{\sigma}_T (t), t) - \omega(y)^{-1} v^\mathrm{app}(y, t; T) \right)\right| < \frac{\eps}{2} 
	\end{align}
	for all $u_0 \in \mathcal{U}_\eps$. By Corollary \ref{c: colinear front approximated by vapp} in the co-linear case or Corollary \ref{c: lin ind front approximated by vapp} in the linearly independent case, $v^\mathrm{app}$ is a good approximation to the critical front $q_*$, so that
	\begin{align}
	\sup_{y \in \R} \left| \rho_{0,-1} (y) \omega (y) \left( u(y+\tilde{\sigma}_T (t), t) - q_*(y)\right)\right| < \frac{3 \eps}{4} \label{e: thm 1 proof 1}
	\end{align}
	provided $T$ is sufficiently large. Defining
	\begin{align}
	\sigma(t) = c_* t - \frac{3}{2 \eta_*} \log t + \frac{3}{2 \eta_*} \log T,
	\end{align}
	we notice that for any fixed $T$
	\begin{align}
	\left| \sigma(t) - \tilde{\sigma}_T (t) \right| = \left| \log \left( \frac{1}{1+ t/T} \right) \right| \to 0 \text{ as } t \to \infty. 
	\end{align}
	Since $u$ is smooth for $t > 0$ by parabolic regularity, we can use the mean-value theorem to replace $\tilde{\sigma}_T(t)$ with $\sigma(t)$ in the estimate \eqref{e: thm 1 proof 1} for $T$ sufficiently large, obtaining
	\begin{align}
	\sup_{y \in \R} \left| \rho_{-1} (y) \omega (y) \left( u(y+\sigma(t), t) (t), t) - q_*(y)\right)\right| < \eps
	\end{align}
	for $t$ sufficiently large depending on $T$.  This is the estimate of Theorem \ref{t: main}, with 
	\begin{align}
	x_\infty (u_0) = - \frac{3}{2 \eta_*} \log T. 
	\end{align}
	This estimate applies for all $u_0 \in \mathcal{U}_\eps$ provided $t$ and $T$ are sufficiently large, and we have already verified that $\mathcal{U}_\eps$ is open and contains steep initial data, and hence the proof of Theorem \ref{t: main} is complete. 
\end{proof}

\section{Examples and discussion}\label{s: discussion} 
{\color{black} We now explore specific models in which our hypotheses may be verified, and conclude with a discussion of extensions of our results. }

\subsection{Amplitude equations with parametric forcing.} The complex Ginzburg-Landau equation
\begin{align}
A_t = (1+i \alpha) A_{xx} + \mu A -(1+i \gamma) A|A|^2, \quad A = A(x,t) \in \C \label{e: CGL}
\end{align}
is a universal modulation equation describing weakly nonlinear spatiotemporal dynamics in many spatially extended systems \cite{AransonKramer}. The real coefficient version, with $\alpha = \gamma = 0$, describes leading order dynamics near a Turing instability in a pattern-forming system \cite{CrossHohenberg}, and many existing predictions on the behavior of pattern-forming invasion fronts are based on the Ginzburg-Landau approximation \cite{vanSaarloosReview}. Rigorously establishing front selection in \eqref{e: CGL} and related pattern-forming models is a long-standing open problem \cite{vanSaarloosReview, deelanger, colleteckmann, ColletEckmannSH}. The results in this paper do not directly apply to \eqref{e: CGL}, since states selected in the wake of invasion fronts in \eqref{e: CGL} have only marginally stable essential spectrum, due to invariance under the gauge symmetry $A \mapsto e^{i \phi} A$, so that Hypothesis \ref{hyp: stability on left} is violated. We nonetheless expect the methods here to be of use in analyzing \eqref{e: CGL} and other pattern-forming systems. See \cite{AveryScheelGL} for some progress in this direction which establishes sharp decay estimates for localized perturbations to critical fronts in \eqref{e: CGL} with $\alpha = \gamma = 0$. 

Periodic forcing of pattern-forming systems has been of substantial interest as a method to experimentally control patterns and produce new types of coherent structures \cite{Parametric1, Parametric2, Parametric3, ParametricForcingLiquidCrystal, ParametricForcingRayleighBenard}. Properly tuned periodic forcing resonant to the natural system frequencies introduces terms to \eqref{e: CGL} which break the gauge symmetry. For a concrete example, consider the parametrically forced Swift-Hohenberg equation
\begin{align}
u_t = -(1+\partial_x^2) u + \mu u - u^3 + \beta \cos (k x) u,  \quad u = u(x,t) \in \R, \quad \mu > 0,
\end{align}
a prototypical model for dynamics near a pattern-forming Turing instability. Making the formal ansatz $u(x,t) = \eps A(\eps^2 x, \eps t) e^{i x} + \bar{A}(\eps^2 x, \eps t) e^{-ix}$ with $\eps = \sqrt{\mu}$, one finds the parametrically forced Ginzburg-Landau equation
\begin{align}
A_t = A_{xx} + A -A|A|^2 + \beta \bar{A}^{k-1}, \quad A = A(x,t) \in \C \label{e: GL parametric}
\end{align}
as a solvability condition, after rescaling coefficients \cite{ParametricMeron}. The parameter $k \in \Z$ is chosen to be a resonant multiple of the linearly selected wavenumber $1$ in the Swift-Hohenberg equation. The most relevant resonance in experiments is often the $2:1$ resonance, so we focus on the case $k = 2$.
We let $A = u + iv$ to obtain the two-component system
\begin{align}
u_t &= u_{xx} + u - u(u^2 + v^2) + \beta u \label{e: GL parametric u} \\
v_t &= v_{xx} + v - v(u^2 + v^2) - \beta v. \label{e: GL parametric v}
\end{align}
Without loss of generality, we restrict to $\beta \geq 0$, since $\beta \mapsto -\beta$ just swaps $u$ and $v$.
\begin{thmlocal}
	The system \eqref{e: GL parametric u}-\eqref{e: GL parametric v} satisfies Hypotheses \ref{hyp: dispersion reln} through \ref{hyp: point spectrum} for all $\beta > 0$. 
\end{thmlocal}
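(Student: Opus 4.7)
The system linearized about the trivial state $(u,v) = (0,0)$ decouples into the two scalar equations $u_t = u_{xx} + (1+\beta)u$ and $v_t = v_{xx} + (1-\beta)v$, so the dispersion relation factors as $d_c(\lambda,\nu) = [\nu^2 + c\nu + (1+\beta) - \lambda]\cdot[\nu^2 + c\nu + (1-\beta) - \lambda]$. For $\beta > 0$ the $u$-component drives the invasion: the first factor has a marginally pinched double root at $(\lambda,\nu) = (0,\nu_*)$ with $\nu_* = -\sqrt{1+\beta}$ when $c = c_* := 2\sqrt{1+\beta}$, while the second factor evaluates to $-2\beta \neq 0$ at this point. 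Writing $\tilde\nu = \nu - \nu_*$, a short computation yields
\begin{align}
d_{c_*}(\lambda,\nu) = (\tilde\nu^2 - \lambda)(\tilde\nu^2 - 2\beta - \lambda) = 2\beta\lambda - 2\beta\tilde\nu^2 + \mathrm{O}(\tilde\nu^4,\lambda\tilde\nu^2,\lambda^2),
\end{align}
so $d_{10}d_{02} = -4\beta^2 < 0$ and Hypothesis \ref{hyp: dispersion reln}(i) holds. Parts (ii) and (iii) follow by inspecting the two factors individually on $\{\nu = \nu_* + ik\}$: the first factor reduces to $-k^2 - \lambda$, vanishing on the critical line only at $(\omega,k) = 0$ and never for $\Re\lambda > 0$, while the second factor reduces to $-k^2 - 2\beta - \lambda$, which has no zeros with $\Re\lambda \geq 0$ whenever $\beta > 0$. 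Pinching of the double root is immediate, since the two spatial roots $\nu_* \pm \sqrt{\lambda}$ of the first factor separate to $\pm\infty$ as $\Re\lambda \to +\infty$.

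For Hypothesis \ref{hyp: front existence}, I will take the critical front in the invariant subspace $\{v \equiv 0\}$, where the system reduces to the scalar KPP-type equation $u_t = u_{xx} + (1+\beta)u - u^3$. Classical existence theory (e.g., the far-field/core scheme of \cite{AveryGarenaux} applied in the regular setting) produces a critical pulled front $q_\mathrm{KPP}$ with speed $c_*$, wake state $\sqrt{1+\beta}$, and generic asymptotics $q_\mathrm{KPP}(x) = (a+x)e^{-\eta_* x} + \mathrm{O}(e^{-(\eta_*+\eta)x})$ as $x \to +\infty$ for some $\eta > 0$. Setting $q_*(x) = (q_\mathrm{KPP}(x),0)^T$ satisfies Hypothesis \ref{hyp: front existence} with $u_0 = u_1 = (1,0)^T$, so we are in the colinear case of Section \ref{s: colinear}. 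Hypothesis \ref{hyp: stability on left} is immediate: linearizing \eqref{e: GL parametric u}-\eqref{e: GL parametric v} about the wake state $(\sqrt{1+\beta},0)$ again decouples, giving $\tilde u_t = \tilde u_{xx} - 2(1+\beta)\tilde u$ and $\tilde v_t = \tilde v_{xx} - 2\beta\tilde v$, both strictly stable for $\beta > 0$.

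The key observation for Hypothesis \ref{hyp: point spectrum} is that because the $v$-component of $q_*$ vanishes identically, the linearization about $q_*$ \emph{also} decouples: the $\phi$-equation is $\phi_t = \phi_{xx} + (1+\beta)\phi - 3q_\mathrm{KPP}^2\phi$, which is exactly the linearization about the scalar critical KPP-Nagumo front, and the $\psi$-equation is $\psi_t = \psi_{xx} + (1-\beta-q_\mathrm{KPP}^2)\psi$. For the $\phi$-block, the absence of unstable weighted point spectrum and of a bounded kernel element is a classical scalar fact --- the only candidate zero mode $\omega q_\mathrm{KPP}'$ grows linearly in $x$ by the asymptotics above --- and adapts verbatim from the Fisher-KPP analysis in \cite{AveryGarenaux, CAMS}. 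For the $\psi$-block, conjugation with the weight $\omega$ transforms the operator on $x \geq 1$ into the Schr\"odinger-type operator $\partial_x^2 - 2\beta - q_\mathrm{KPP}(x)^2$, whose potential is bounded above by $-2\beta < 0$; a standard further symmetric conjugation by $e^{(c_*/2)x}$ on $x \leq -1$ absorbs the drift $c_*\partial_x$ and yields a globally self-adjoint operator with potential pointwise below $-2\beta$, hence strictly negative spectrum. Thus $\mcl$ has no spectrum in $\{\Re\lambda \geq 0\}$, verifying Hypothesis \ref{hyp: point spectrum}. I expect no conceptual obstacles: every step reduces to either scalar KPP theory or a self-adjoint Schr\"odinger comparison, and the only care needed is book-keeping of the piecewise weight in the $\psi$-block.
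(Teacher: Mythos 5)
Your proof is correct and follows essentially the same route as the paper: compute $c_* = 2\sqrt{1+\beta}$ and verify Hypothesis \ref{hyp: dispersion reln} from the factored dispersion relation, take the critical front in the invariant real subspace $\{v=0\}$ from scalar Fisher--KPP theory for Hypotheses \ref{hyp: front existence}--\ref{hyp: stability on left}, and exploit the block-diagonal structure of the linearization to reduce Hypothesis \ref{hyp: point spectrum} to two scalar problems. Your observation that $u_0 = u_1 = (1,0)^T$ puts this in the colinear case is a nice explicit check the paper leaves implicit (it holds because $c_* = 2\eta_*$ forces $A^{01}=0$).

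The one substantive variation is the treatment of the $\psi$-block. The paper appeals to the fact that $(\mathcal{A}_v + 2\beta)q_*' = 0$ with $q_*'$ non-vanishing and invokes Sattinger's Sturm--Liouville theorem to push the point spectrum of $\mcl_v$ below $-2\beta$. You instead conjugate by $e^{\eta_* x}$ globally (here $\eta_* = c_*/2$, so this absorbs the drift everywhere at once) to exhibit the self-adjoint operator $\partial_x^2 - 2\beta - q_*^2$ with potential uniformly $\leq -2\beta$, giving the bound by direct variational positivity without needing the zero-mode identity. Both work; yours is marginally more elementary. Two small but worth-making points of bookkeeping: (1) your conclusion ``no spectrum in $\{\Re\lambda \geq 0\}$'' overstates it slightly --- the essential spectrum of $\mcl_u$ touches $\lambda = 0$, and what you have actually shown is no unstable \emph{point} spectrum; and (2) you explicitly rule out a bounded kernel element for the $\phi$-block (linear growth of $\omega q_\mathrm{KPP}'$) but not for the $\psi$-block, where the quickest argument is that $\lambda = 0$ lies strictly to the right of the essential spectrum of $\mcl_v$, so any bounded solution of $\mcl_v v = 0$ would decay exponentially and hence be an $L^2$ eigenfunction, which you have already excluded.
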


\begin{proof}
	From a short calculation, one finds the linear spreading speed $c_* = 2 \sqrt{1 + \beta}$ and concludes that Hypothesis \ref{hyp: dispersion reln} is satisfied for all $\beta > 0$: when $\beta = 0$, the dispersion relation in the leading edge instead has the form $d_{c_*} (\lambda, \nu) = (\lambda-(\nu-\nu_*)^2)^2$ with $c_* = 2$, while for $\beta \neq 0$ this double double root splits into two simple pinched double roots, and choosing $c_* = 2 \sqrt{1+\beta}$ we find one marginally stable and one stable pinched double root. 
	
	The real subspace $v= 0 $ is invariant in \eqref{e: GL parametric u}-\eqref{e: GL parametric v}, and so we find a critical front solution $(u, v) = (q_*, 0)$, where $q_*$ solves the Fisher-KPP type traveling wave equation
	\begin{align}
	q_*'' + c_* q_*' + (1+\beta) q_* - q_*^3 = 0, \quad \lim_{x \to - \infty} q_*(x) = \sqrt{1 + \beta}, \quad \lim_{x \to \infty} q_*(x) = 0. 
	\end{align}
	This front $q_*$ can be constructed via classical phase plane methods, and satisfies the generic asymptotics $q_*(x) \sim (ax + b)e^{-\sqrt{1+\beta} x}$, so that Hypothesis \ref{hyp: front existence} is satisfied for \eqref{e: GL parametric} for all $\beta > 0$. From a short calculation, we find that the essential spectrum of the linearization about the state $u_- = (\sqrt{1+\beta},0)$ selected in the wake of $q_*$ is strongly stable for all $\beta > 0$, so that Hypothesis \ref{hyp: stability on left} is satisfied. We emphasize that spectral stability in the wake is a consequence of breaking the gauge symmetry of \eqref{e: CGL}. 
	
	It only remains to verify Hypothesis \ref{hyp: point spectrum}. The linearization about the critical front $(q_*, 0)$ in the frame moving with the linear spreading speed is given by
	\begin{align}
	\mathcal{A} = \begin{pmatrix}
	\partial_{xx} + c_* \partial_x + (1+ \beta - 3 q_*^2) & 0 \\
	0 & \partial_{xx}+ c_* \partial_x + (1 - \beta - q_*^2) 
	\end{pmatrix} =: \begin{pmatrix}
	\mathcal{A}_u & 0 \\ 0 & \mathcal{A}_v
	\end{pmatrix}. 
	\end{align}
	It follows from translation invariance of \eqref{e: GL parametric u}-\eqref{e: GL parametric v} that $\mathcal{A}_u q_*' = 0$, and we also observe that $(\mathcal{A}_v+2 \beta) q_*' = 0$. We define the critical weight $\omega$ by \eqref{e: omega def} with $\eta_* = \sqrt{1+\beta}$, and the associated conjugate operators
	\begin{align}
	\mcl = \begin{pmatrix}
	\mcl_u & 0 \\ 0 & \mcl_v
	\end{pmatrix} := 
	\begin{pmatrix}
	\omega \mathcal{A}_u \omega^{-1} & 0 \\
	0 & \omega \mathcal{A}_v \omega^{-1}
	\end{pmatrix}. 
	\end{align}
	Observe that $\mcl_u [\omega q_*'] = (\mcl_v + 2 \beta) [\omega q_*'] = 0$, and that $\omega q_*'$ is non-vanishing. It follows from a Sturm-Liouville argument (see \cite[Theorem 5.5]{Sattinger}) that $\mcl_u$ has no unstable point spectrum --- on, say, $L^2 (\R)$ --- and $\mcl_v$ has no eigenvalues $\lambda$ with $\Re \lambda > - 2 \beta$. Using basic ODE techniques, one can further show that there are no solutions to $\mcl_u u = 0$ or $\mcl_v v = 0$ which are bounded on $\R$. In the former case, this follows from the fact that $\omega q_*' (x) \sim x, x \to \infty$ is the unique solution up to a constant multiple which is bounded on the left. In the latter case, $\lambda = 0$ is to the right of the essential spectrum of $\mcl_v$, and so bounded solutions to $\mcl_v v  = 0$ are necessary exponentially localized and hence must be eigenfunctions, which we have already excluded. We conclude that the system \eqref{e: GL parametric u}-\eqref{e: GL parametric v} satisfies Hypothesis \ref{hyp: point spectrum}, which completes the proof of the theorem. 
\end{proof}

\subsection{Competitive Lotka-Volterra systems.} A large body of mathematical work on invasion processes is motivated by ecological systems, and in particular understanding the spread of invasive species. As an illustrative example, we consider here the Lotka-Volterra competition model
\begin{align}
	u_t &= u_{xx} + u (1-u-a_1 v) \label{e: LV 1} \\
	v_t &= \sigma v_{xx} + r v (1-a_2 u -v). \label{e: LV 2}
\end{align}
When $a_1 < 1 < a_2$, the uniform equilibrium $(u, v) \equiv (0, 1)$ is unstable, and one finds the linear spreading speed $c_* = 2 \sqrt{1-a_1}$. Owing to the special competitive structure of the reaction terms, \eqref{e: LV 1}-\eqref{e: LV 2} retains a comparison principle which can be used to estimate propagation speeds from steep data \cite{Alhasanat, LLWCompetitive, LLWCooperative}. In particular, it was shown in \cite{LLWCompetitive} that propagation occurs with the linear spreading speed for 
\begin{align}
0 < a_1 < 1 < a_2, \quad 0 < \sigma < 2, \quad (a_1 a_2 - M) r \leq M (2-\sigma) (1-a_1), \label{e: LV pulled assumption}
\end{align}
where $M = \max \{1, 2(1-a_1)\}$. It was shown in \cite{FayeHolzerLV} that Hypotheses \ref{hyp: dispersion reln} through \ref{hyp: point spectrum} are satisfied for \eqref{e: LV 1}-\eqref{e: LV 2} provided \eqref{e: LV pulled assumption} holds, and so our results recover propagation at the linear spreading speed in this regime. 

It was conjectured in \cite{Hosono} that the condition \eqref{e: LV pulled assumption} is asymptotically sharp in the large $r$ limit: for instance, fixing $a_1 = \frac{1}{2}, \sigma = 1$, it was conjectured that there exists a function $\Lambda(r) = 2 + \mathrm{o}(1), r \to \infty$ such that the invasion process in \eqref{e: LV 1}-\eqref{e: LV 2} is pulled for $a_2 < \Lambda(r)$, and pushed for $a_2 > \Lambda(r)$. Numerical evidence for this conjecture is not entirely conclusive, however; see \cite[Section 6]{AveryHolzerScheel}. 

One may ask why it is useful to discuss \eqref{e: LV 1}-\eqref{e: LV 2} in the context of our approach, when this system admits a comparison principle and so is amenable to more classical approaches. We find value in our approach even in this setting for the following reasons:
\begin{enumerate}
	\item Our results apply to open classes of systems, and so apply to perturbations of \eqref{e: LV 1}-\eqref{e: LV 2} which incorporate non-competitive effects and hence break the comparison structure. Indeed, it is acknowledged in \cite{LLWCooperative} that real ecological systems are rarely purely competitive. 
	\item Even in the purely competitive setting, it may be easier to verify our spectral assumptions using monotonicity properties of the associated eigenvalue problem than to construct sufficiently detailed super- and sub- solutions to the full nonlinear problem. 
	\item One may in principal verify our conceptual assumptions using geometric dynamical systems methods including Evans function techniques and geometric singular perturbation theory (in appropriate limits). Our results thereby open problems of invasion processes in ecological systems to a wider range of approaches. For instance, we expect that a singular perturbation approach may be useful in verifying our assumptions in the large $r$ limit, which would then allow one to rigorously determine the asymptotics of the pushed-pulled transition curve $\Lambda(r)$. 
\end{enumerate}

\subsection{A model for tumor growth}

The following model for dynamics of tumors driven by cancer stem cells was introduced in \cite{Tumor1}
\begin{align}
	u_t &= D u_{xx} + p_s \gamma_u F(u+v) u \label{e: tumor 0 1} \\
	v_t &= D v_{xx} + (1-p_s) \gamma_u F(u+v) u + \gamma_v F(u+v) v - \alpha v. \label{e: tumor 0 2}
\end{align}
The variable $u$ denotes the concentration of cancer stem cells, which reproduce at rate $\gamma_u$ by dividing into either two cancer stem cells, with probability $p_s$, or one cancer stem cell and one tumor cell, with probability $1-p_s$. The variable $v$ denotes the concentration of tumor cells, which can only reproduce by splitting into two tumor cells, with rate $\gamma_v$, and die off at rate $\alpha$ (for instance, due to an external treatment which targets tumor cells rather than cancer stem cells). The function $F$ models competition for resources between tumor cells and cancer stem cells, and is assumed to be monotonically decreasing and satisfy $F(0) = 1$. 

Of particular interest is the \emph{tumor invasion paradox}, where increasing the tumor death rate $\alpha$ (for instance, by using a more aggressive treatment regimen) may actually accelerate spatial spread of the tumor \cite{Tumor2}. Experimental observations indicate that the cancer stem self self-renewal probability $p_s$ is often small \cite{Tumor2}. Setting $p_s = \eps$, assuming cell movement is slow with $D = \eps d$, assuming $\gamma_u = \gamma_v = 1$, and defining $\tau = \eps t$, we find 
\begin{align}
u_\tau &= d u_{xx} + F(u+v) u, \nonumber \\
\eps v_\tau &= \eps d v_{xx} + (1-\eps) F(u+v) u + F(u+v) v - \alpha v. \label{e: tumor 2}
\end{align}
Setting $\eps = 0$, one may solve the second, algebraic equation for $v = v_\alpha (u)$, and inserting this into the first equation leads \cite{Tumor2} to the Fisher-KPP type equation
\begin{align}
u_\tau = d u_{xx} + F(u +v_\alpha(u)) u. \label{e: tumor kpp}
\end{align}
In \cite{Tumor1}, it is shown that the propagation speed in \eqref{e: tumor kpp} is
\begin{align}
	c_*^\mathrm{kpp} (\alpha) = \begin{cases}
	2 \sqrt{d}, &\alpha \geq 1, \\
	2 \sqrt{d \alpha}, & 0 < \alpha < 1.  
	\end{cases}	
\end{align}
One then sees the tumor invasion paradox for $0 < \alpha < 1$: in this regime, increasing $\alpha$ increases the invasion speed. The analysis of \cite{Tumor2} does not, however, extend to the case $\eps \neq 0$. The authors there view the reduction to \eqref{e: tumor kpp} as a slow manifold reduction, but since there is not a well-developed geometric singular perturbation theory for partial differential equations, it is not clear how to track a slow manifold for $\eps \neq 0 $ in this setting. 

On the other hand, Theorem \ref{t: main} reduces predicting invasion speeds from an infinite-dimensional PDE problem to finite dimensional ODEs capturing existence and spectral stability of invasion fronts, for which there is a well-developed geometric singular perturbation theory \cite{Fenichel}. {\color{black}Using this approach, one can prove that for fixed $0 < \alpha < 1$ or $\alpha > 1$, \eqref{e: tumor 2} satisfies Hypotheses \ref{hyp: dispersion reln} through \ref{hyp: point spectrum} for $\eps$ sufficiently small, and thereby rigorously describe this tumor growth process for $\eps \neq 0$ \cite{AveryTumor}.}

\subsection{Discussion}
{\color{black} We now comment on potential extensions of our results, and challenges therein. 

\noindent \textbf{Global versus local aspects.} As mentioned in the introduction, in scalar equations global convergence results to fronts for non-negative initial data are often available through the use of the comparison principle. Even in scalar equations, the situation becomes more subtle when one allows for sign-changing initial data. Considering the Allen-Cahn equation $u_t = u_{xx} + u - u^3$, there are two pulled fronts which connect to $u = 0 $ in the leading edge: one positive front which converges to $u = 1$ in the wake, and a negative front which converges to $u = -1$ in the wake. Since pulled fronts are governed by their tail dynamics, initial data which appear positive to the naked eye, converging to $u = 1$ in the wake, but have a small, visibly imperceptible negative tail may in fact converge locally uniformly to the negative front. A kink between the $u = 1$ and $u = -1$ states then develops in the wake and slowly propagates to the left. This phenomenon can be observed in direct numerical simulations. 

This example demonstrates that when one no longer relies on comparison principles and preservation of positivity, global aspects of front convergence become much subtler. It is then quite natural that, exploiting only spectral properties and not the specific structure of the equation, our convergence result is local in nature. 
}

\noindent \textbf{Towards modulated and pattern-forming fronts.} {\color{black} Invasion dynamics are often observed shortly after the onset of instability in some bifurcation. In \cite{AverySmallAmpFronts}, we show that pulled fronts emerge and govern spreading dynamics near the onset of instability in a transcritical, saddle-node, or supercritical pitchfork bifurcation}. On the other hand, instabilities in spatially extended systems may develop in more complicated manners, such as in a Turing bifurcation, which leads to creation of periodic patterns with a selected wavenumber in the wake of the invasion process \cite{vanSaarloosReview}. There are two main difficulties in extending our methods to invasion processes governed by Turing instabilities. The first is that the dynamics in the leading edge now become time-periodic rather than stationary in the co-moving frame, leading to the formation of \emph{modulated} invasion fronts \cite{EbertvanSaarloos, EckmannWayne} which are also time-periodic in the moving frame. The linear analysis of Section \ref{s: linear estimates} would then need to be adapted to handle time-periodic coefficients. We expect this can be done, for instance, by extending work in \cite{MargaretBjornKevinTimePeriodic} which develops a pointwise semigroup approach for nonlinear stability of time-periodic Lax shocks. The other difficulty is that the patterns in the wake are only diffusively stable, and so the stability of the patterns in the wake themselves is itself a difficult problem; see \cite{jnrz1, jnrz2, zumbruninventiones, uecker}. Reconciling this delicate stability argument with the lack of decay resulting from the matching with the diffusive tail in the leading edge is the most difficult part of establishing front selection in the wake of a Turing instability, and we leave further discussion to future work. {\color{black} Recent work on local stability of pulled pattern-forming fronts in the FitzHugh-Nagumo system gives some progress in this direction \cite{AveryCarterScheeldeRijkNonlinear}.}

\noindent \textbf{Declarations.} This work was supported by the National Science Foundation through NSF-DMS-2202714. Any opinions, findings, and conclusions or recommendations expressed in this material are those of the author and do not necessarily reflect the views of the National Science Foundation. The authors have no other
competing interests that are relevant to the content of this article.

\appendix

\section{Higher order terms in the diffusive equation --- $u_0, u_1$ linearly independent}\label{app: lin ind}

\subsection{Expressions in original variables}
We first define the projections
\begin{align*}
\PI \begin{pmatrix}
f_1 \\ \vdots \\ f_n 
\end{pmatrix} = f_1, 
\quad 
\PII \begin{pmatrix}
f_1 \\ 
f_2 \\ 
\vdots \\ f_n
\end{pmatrix} = f_2, \quad 
\Ph \begin{pmatrix} f_1 \\ \vdots \\ f_n 
\end{pmatrix}
= \begin{pmatrix}
f_3 \\ \vdots \\ f_n 
\end{pmatrix},
\end{align*}
and the coefficients $s_{ij}$ 
\begin{align}
SQ \begin{pmatrix}
\phiI \\ \phiII \\ \phih 
\end{pmatrix} = \begin{pmatrix}
\phiI + s_{12} \phiII + s_{13}^T \phih \\
s_{21} \phiI + s_{22} \phiII + s_{23}^T \phih \\ 
s_{31} \phiI + s_{32} \phiII + s_{33} \phih
\end{pmatrix}. 
\end{align}

The higher order corrections $\FI_j = \FI_j(\phiI, \phiII, \phih, t+T), j = 1,2,$ are then given by
\begin{align*}
\FI_1 &= \frac{3}{2 (t+T)} s_{12} \phiII - \frac{3}{2 \eta_* (t+T)} \partial_y \phiI + b_{12}^{10} \partial_t \phiII + b_{12}^{02} \partial_y^2 \phiII  + b_{13}^{01} \partial_y \phih, \\
\FI_2  &= \tilde{b}_{13} (\partial_t, \partial_y) \phih + \frac{3} {2(t+T)} s_{13}^T \phih - \frac{3}{2 \eta_* (t+T)} \partial_y( s_{12} \phiII + s_{13}^T\phih)  + \PI \omega N (\omega^{-1} Q \Phi),
\end{align*}
while $\FII_j = \FII_j (\phiI, \phiII,\phih, t+T), j = 1, 2, 3,$ are defined by
\begin{align*}
\FII_1 &= - b_{21}^{10} \partial_t \phiI - b_{21}^{02} \partial_y^2 \phiI - b_{22}^{01} \partial_y \phiII - \frac{3}{2 (t+T)} s_{21} \phiI, \\
\FII_2 &=  - b_{22}^{10} \partial_t \phiII - b_{22}^{02} \partial_y^2 \phiII - b_{23}^{01} \partial_y \phih - \frac{3}{2(t+T)} s_{22} \phiII + \frac{3}{2 \eta_*(t+T)} s_{21} \partial_y \phiI, \\
\end{align*}
and 
\begin{align*}
\FII_3 = - \tilde{b}_{23}(\partial_t, \partial_y)\phih - \frac{3}{2 (t+T)} \left( s_{23}^T \phih - \eta_*^{-1} s_{22}\partial_y \phiII- \eta_*^{-1} s_{23}^T \partial_y \phih \right)  - \PII S \omega N (\omega^{-1} Q \Phi). 
\end{align*}
Finally, $\Fh_j, j = 0, 1,2$ are defined as 
\begin{align*}
\Fh_0 &= - b_{31}^{10} \partial_t \phiI - b_{31}^{02} \partial_y^2 \phiI - b_{32}^{01} \partial_y \phiII - \frac{3}{2(t+T)} s_{31} \phiI, \\
\Fh_1 &=  \frac{3}{2 \eta_* (t+T)} s_{31} \partial_y \phiI - \left( b_{32}^{10} \partial_t + b_{32}^{02} \partial_y^2 + \frac{3}{2(t+T)} s_{32} \right) \phiII - b_{33}^{01} \partial_y \phih, 
\end{align*}
and
\begin{align*}
\Fh_2 = \frac{3}{2 \eta_* (t+T)} s_{32} \partial_y  \phiII + \left( -\tilde{b}_{33} (\partial_t, \partial_y) - \frac{3}{2(t+T)} s_{33} + \frac{3}{2\eta_*(t+T)} s_{33} \partial_y \right) \phih 
- \Ph \omega (N \omega^{-1} Q \Phi). 
\end{align*}

\subsection{Expressions in self-similar variables}\label{app: lin ind self sim}

Define for $\Psi = (\psiI, \psiII, \psih)^T$
\begin{align}
\tilde{N}(\xi, \tau, \psiI, \psiII, \psih) = \omega(y(\xi, \tau)) N \left( (\omega(y(\xi, \tau))^{-1} Q \Psi \right).
\end{align}
The higher order corrections in self-similar variables $\tilde{\mcf}^\mathrm{I/II/h}_j (\psiI, \psiII, \psih, \xi, \tau)$ are given by
\begin{multline}
\tfi_1 =  - \frac{3}{2\eta_*} e^{-\tau/2} \Deff^{-1/2} \partial_\xi\psiI + \left( \frac{3}{2} s_{12} + b_{12}^{10} (\partial_\tau - \frac{1}{2} \xi \partial_\xi ) + b_{12}^{02} \Deff^{-1} \partial_\xi^2 \right) \psiII + e^{\tau/2} b_{13}^{01} \Deff^{-1/2} \partial_\xi \psih, \label{e: tfi 1 expression}
\end{multline}
\begin{multline}
\tfi_2 =  - \frac{3}{2 \eta_*} e^{-\tau/2} \Deff^{-1/2} \partial_\xi \psiII  + \bigg[ e^\tau \tilde{b}_{13} \left( e^{-\tau} (\partial_\tau - \frac{1}{2} \xi \partial_\xi), e^{-\tau/2} \Deff^{-1/2} \partial_\xi \right) + \frac{3}{2} s_{13}^T \\ - \frac{3}{2 \eta_*} e^{-\tau/2} s_{13}^T \Deff^{-1/2} \partial_\xi \bigg] \psih  + e^\tau \PI \tilde{N}(\xi, \tau, \psiI, \psiII, \psih), 
\end{multline}
\begin{align}
\tfii_1 = \left( - b_{21}^{10} (\partial_\tau - \frac{1}{2} \xi \partial_\xi ) - b_{21}^{02} \Deff^{-1} \partial_\xi^2 - \frac{3}{2} s_{21}\right) \psiI - e^{\tau/2} b_{22}^{01} \Deff^{-1/2} \partial_\xi \psiII, \label{e: tf ii 1 expression}
\end{align}
\begin{multline}
\tfii_2 = \frac{3}{2 \eta_*} e^{-\tau/2} s_{21} \Deff^{-1/2} \partial_\xi  \psiI + \left( -b_{22}^{10} (\partial_\tau - \frac{1}{2} \xi \partial_\xi) - b_{22}^{02} \Deff^{-1} \partial_\xi^2 - \frac{3}{2} s_{22} \right) \psiII - e^{\tau/2} b_{23}^{01} \Deff^{-1/2} \partial_\xi \psih 
\end{multline}
\begin{multline}
\tfii_3 =  \left( - e^{\tau} \tilde{b}_{23} \left( e^{-\tau} (\partial_\tau - \frac{1}{2} \xi \partial_\xi), e^{-\tau/2} \Deff^{-1/2} \partial_\xi \right) - \frac{3}{2} s_{23}^T + \frac{3}{2 \eta_*} e^{-\tau/2} s_{23}^T \Deff^{-1/2} \partial_\xi \right) \psih \\ + \frac{3}{2 \eta_*} e^{-\tau/2} s_{22} \Deff^{-1/2} \partial_\xi \psiII
 - e^\tau \PII \tilde{N}(\xi, \tau, \psiI, \psiII, \psih) 
\end{multline}
\begin{align}
\tfh_0 = \left(-b_{31}^{10} (\partial_\tau - \frac{1}{2} \xi \partial_\xi ) - b_{31}^{02} \Deff^{-1} \partial_\xi^2 - \frac{3}{2} s_{31} \right) \psiI - b_{32}^{01} e^{\tau/2} \Deff^{-1/2} \partial_\xi \psiII. \label{e: tilde F h 0 expression}
\end{align}
\begin{multline}
\tfh_1 = \frac{3}{2 \eta_*} s_{31} e^{-\tau/2} \Deff^{-1/2} \partial_\xi \psiI - \left(  b_{32}^{10} (\partial_\tau - \frac{1}{2} \xi \partial_\xi) + b_{32}^{02} \Deff^{-1} \partial_\xi^2 + \frac{3}{2} s_{32} \right) \psiII - b_{33}^{01} e^{\tau/2}  \Deff^{-1/2} \partial_\xi \psih. \label{e: tfh 1 expression}
\end{multline}
\begin{multline}
\tfh_2 =  \left( - e^\tau \tilde{b}_{33} \left( e^{-\tau} (\partial_\tau - \frac{1}{2} \xi \partial_\xi), e^{-\tau/2} \Deff^{-1/2} \partial_\xi \right) - \frac{3}{2} s_{33} + \frac{3}{2 \eta_*} s_{33} e^{-\tau/2} \Deff^{-1/2} \partial_\xi \right) \psih \\ +  \frac{3}{2 \eta_*} s_{32} e^{-\tau/2} \Deff^{-1/2} \partial_\xi  \psiII - e^\tau \Ph \tilde{N}(\xi, \tau, \psiI, \psiII, \psih). 
\end{multline}

\section{Sharpness of resolvent estimates}\label{app: sharpness}
\begin{proof}[Proof of Lemma \ref{l: sharpness}]
	In the proof of Lemma \ref{l: center projections pole}, given in \cite[Lemma 2.2]{SIMA}, the projections $P^\mathrm{cs/cu}(\gamma)$ are constructed as a polynomial in $\mathcal{M}(\gamma^2)$, with coefficients involving its eigenvalues, via Lagrange interpolation. We now use a different characterization. Since the eigenspaces associated to $\nu^\pm(\gamma)$ are simple for $\gamma \neq 0$, we can write the associated spectral projection as
	\begin{align}
	P^\mathrm{cs}(\gamma) v = \frac{\langle v, \varrho^-_\mathrm{ad}(\gamma)\rangle}{\langle \varrho^-(\gamma), \varrho^-_\mathrm{ad}(\gamma)} \varrho^-(\gamma), \label{e: center stable projection}
	\end{align}
	where $\varrho^-(\gamma)$ spans the kernel of $\mathcal{M}(\gamma^2) - \nu^-(\gamma)$, and $\varrho^-_\mathrm{ad}(\gamma)$ spans the cokernel, with an analogous formula for $P^\mathrm{cu}(\gamma)$. Standard spectral perturbation theory \cite[Chapter 2]{Kato} implies that $\varrho^-(\gamma)$ and $\varrho^-_\mathrm{ad}(\gamma)$ are analytic in $\gamma$ in a neighborhood of the origin, with expansions
	\begin{align*}
	\varrho^-(\gamma) &= \varrho^-_0 + \varrho^-_1 \gamma + \mathrm{O}(\gamma^2), \\
	\varrho^-_\mathrm{ad}(\gamma) &= \varrho^-_{\mathrm{ad},0} + \varrho^-_{\mathrm{ad},1} \gamma + \mathrm{O}(\gamma^2). 
	\end{align*}
	Lemma \ref{l: center projections pole} implies that, since $P^\mathrm{cs}(\gamma)$ has a pole of order 1, we must have $\langle \varrho_0^-, \varrho^-_{\mathrm{ad},0} \rangle = 0$, but 
	\begin{align*}
	\langle \varrho^-_1, \varrho^-_{\mathrm{ad},0}\rangle + \langle \varrho^-_0, \varrho^-_{\mathrm{ad},1} \rangle \neq 0. 
	\end{align*}
	Expanding the formula \eqref{e: center stable projection}, we obtain
	\begin{align}
	P_\mathrm{pole} v = \frac{\langle v, \varrho^-_{\mathrm{ad},0} \rangle}{\langle \varrho^-_1, \varrho^-_{\mathrm{ad},0}\rangle + \langle \varrho^-_0, \varrho^-_{\mathrm{ad},1} \rangle} \varrho^-_0. 
	\end{align}
	{}	It follows from a short direct calculation, writing the explicit form of $\mathcal{M}(0)$ and observing that $\ker \mathcal{M}(0) = \spn ( \varrho^-_0), \ker \mathcal{M}(0)^* = \spn (\varrho^-_{\mathrm{ad,0}}))$, that 
	\begin{align}
	\varrho^-_0 = \begin{pmatrix} u_0 \\ 0 \end{pmatrix} \in \R^{2n},
	\end{align}
	and that there exists $v \in \mathrm{Rg} (\Lambda_1)$ such that $\langle v, \varrho^-_{\mathrm{ad},0} \rangle \neq 0$. Together these implies the desired result.  
\end{proof}

\bibliographystyle{abbrv}

\bibliography{systems}

\end{document}